\title[ Quantum jumps via homogenization ]
      { Emergence of jumps in quantum trajectories via homogenization }
\author[Benoist]{\textsc{Tristan Benoist}}
\address{Institut de Math\'ematiques de Toulouse, UMR5219, Universit\'e de Toulouse, CNRS, UPS IMT, F-31062 Toulouse Cedex 9, France}
\email{{\tt tristan.benoist@math.univ-toulouse.fr}}
\author[Bernardin]{\textsc{C\'edric Bernardin}} 
\address{Universit\'e C\^ote d'Azur, CNRS, LJAD\\
Parc Valrose\\
06108 NICE Cedex 02, France\\
\& Interdisciplinary Scientific Center Poncelet (CNRS IRL 2615), 119002 Moscow, Russia}
\email{{\tt cbernard@unice.fr}}
\author[Chetrite]{\textsc{Rapha\"el Chetrite}}
\address{Universit\'e C\^ote d'Azur, CNRS, LJAD\\
Parc Valrose\\
06108 NICE Cedex 02, France\\}
\email{{\tt raphael.chetrite@unice.fr}}
\author[Chhaibi]{\textsc{Reda Chhaibi}}
\address{Institut de Math\'ematiques de Toulouse, UMR5219, Universit\'e de Toulouse, CNRS, UPS IMT, F-31062 Toulouse Cedex 9, France}
\email{{\tt reda.chhaibi@math.univ-toulouse.fr}}
\author[Najnudel]{\textsc{Joseph Najnudel}} 
\address{University of Bristol -- University Walk, Clifton, Bristol, United Kingdom}
\email{{\tt joseph.najnudel@bristol.ac.uk}}
\author[Pellegrini]{\textsc{Cl\'ement Pellegrini}}
\address{Institut de Math\'ematiques de Toulouse, UMR5219, Universit\'e de Toulouse, CNRS, UPS IMT, F-31062 Toulouse Cedex 9, France}
\email{{\tt clement.pellegrini@math.univ-toulouse.fr}}
\date{\today}
\DeclareMathOperator{\tr}{tr}
\DeclareMathOperator*{\esssup}{ess\,sup}
\DeclareMathOperator{\Id}{Id}
\DeclareMathOperator{\Ker}{Ker}
\DeclareMathOperator{\End}{End}
\DeclareMathOperator{\Spec}{Spec}
\DeclareMathOperator{\Span}{Span}
\DeclareMathOperator{\diag}{diag}
\DeclareMathOperator{\id}{id}
\def\half{\frac{1}{2}}
\def\B{{\mathbb B}}
\def\N{{\mathbb N}}
\def\Q{{\mathbb Q}}
\def\R{{\mathbb R}}
\def\C{{\mathbb C}}
\def\D{\mathbb{D}}
\def\P{{\mathbb P}}
\def\E{{\mathbb E}}
\def\F{{\mathbb F}}
\def\Bc{{\mathcal B}}
\def\Cc{{\mathcal C}}
\def\Ec{{\mathcal E}}
\def\Fc{{\mathcal F}}
\def\Lc{{\mathcal L}}
\def\Oc{{\mathcal O}}
\def\Pc{{{\mathcal P}}}
\def\Rc{{\mathcal R}}
\def\Sc{{\mathcal S}}
\def\xb{{\mathbf x}}
\newtheorem{thm}{Theorem}[section]
\newtheorem{proposition}[thm]{Proposition}
\newtheorem{corollary}[thm]{Corollary}
\newtheorem{definition}[thm]{Definition}
\newtheorem{lemma}[thm]{Lemma}
\newtheorem{rmk}[thm]{Remark}
\newtheorem{assumption}[thm]{Assumption}
\numberwithin{equation}{section}
\numberwithin{figure}{section}
\renewcommand{\Re}{\operatorname{Re}}
\renewcommand{\Im}{\operatorname{Im}}
\renewcommand{\imath}{i}
\let\oldtocsection=\tocsection
\let\oldtocsubsection=\tocsubsection
\let\oldtocsubsubsection=\tocsubsubsection
\renewcommand{\tocsection}[2]{\hspace{0em}\oldtocsection{#1}{#2}}
\renewcommand{\tocsubsection}[2]{\hspace{2em}\oldtocsubsection{#1}{#2}}
\renewcommand{\tocsubsubsection}[2]{\hspace{4em}\oldtocsubsubsection{#1}{#2}}
\begin{document}

\begin{abstract}
In the strong noise regime, we study the homogenization of quantum trajectories i.e. stochastic processes appearing in the context of quantum measurement.

When the generator of the average semigroup can be separated into three distinct time scales, we start by describing a homogenized limiting semigroup. This result is of independent interest and is formulated outside of the scope of quantum trajectories.

Going back to the quantum context, we show that, in the Meyer-Zheng topology, the time-continuous quantum trajectories converge weakly to the discontinuous trajectories of a pure jump Markov process. Notably, this convergence cannot hold in the usual Skorokhod topology.
\end{abstract}

\medskip

\keywords{Limit theorems for stochastic processes, Quantum measurement, Quantum collapse, Large noise limits}
\renewcommand{\subjclassname}{%
  \textup{2010} Mathematics Subject Classification}
\subjclass[2010]{Primary 60F99; Secondary 60G60, 81P15}

\maketitle

\newpage
\hrule
\tableofcontents
\hrule
\newpage


\section{Introduction}

Let us start by considerations from quantum mechanics which motivate the stochastic differential equations (SDEs) studied in this paper, as well as their strong noise limits.

\subsection{Physical motivations}
\
\medskip

{\bf Semigroups associated to open quantum systems: } In quantum optics, the evolution of a $d$-level atom is often described using a Markov approximation. Then the system state, encoded into a $d\times d$ density matrix (\emph{i.e.} a positive semidefinite matrix of trace $1$), evolves by the action of a semigroup of completely positive\footnote{Completely positive maps $\Phi:M_d(\C)\to M_d(\C)$ are linear maps such that for any $n\in \mathbb N$, $\Phi\otimes \Id_{M_n(\C)}:M_d(\C)\otimes M_n(\C)\to M_d(\C)\otimes M_n(\C)$ is positive.} trace preserving linear maps whose generator $\Lc$ is called a Lindbladian. More precisely, the evolution of the system's density matrix is solution of the linear ordinary differential equation (ODE)
\begin{equation}\label{eq:Master1}
d\bar\rho_t=\Lc(\bar\rho_t)dt,\quad \bar\rho_0\in \{\rho\in M_d(\C): \rho\geq 0,\quad  \tr \rho=1\}.
\end{equation}
Such equations are known as \emph{quantum master equations}. In a typical quantum optics experiment, one may identify three different contributions to the evolution of the atom. A first contribution is the Hamiltonian dynamic that an experimenter would like to realize {\footnote{The Hamiltonian dynamics can also be intrinsic, independent of any additional drive due to the experimentalist.}}. A second one is the unavoidable environment perturbation that often leads the atom to a steady state. The third one is the effect of any instrument that the experimentalist may put in contact with the atom to track its state. For more details, we refer the reader to \cite{BRE02}.

In this article we are interested in situations where the dynamics generator, $\Lc\equiv\Lc_\gamma$, is associated to three well separated time scales. The separation is done through some parameter $\gamma>0$:
$$\Lc_\gamma = \Lc^{(0)}
             + \gamma   \Lc^{(1)}
             + \gamma^2 \Lc^{(2)} \ .$$
To motivate such a setting, let us consider experiments similar to the famous one realized by Haroche's group \cite{Haroche}. In such experiments the aim is to track the unitary dynamic of a $d$-energy level quantum system when it is well-isolated from its environment. The dynamic induced by the environment is modeled by $\Lc^{(0)}$, the unitary dynamic by $\Lc^{(1)}$ and the effect of the instrument by $\Lc^{(2)}$. Here the large $\gamma$ limit corresponds to a fast decoherence, at speed $\gamma^2$, induced by the instrument compared to the slower steady state relaxation induced by the environment, with speed $\gamma^0=1$. To counteract the Zeno effect, the relevant scale of the unitary dynamic is the intermediary speed $\gamma^1=\gamma$. 

This choice of scaling of the Lindbladian is not limited to such experimental situations.  For different examples of dynamics verifying our choice of scales, see \cite[Section 4.3]{percival1998}.

\medskip

{\bf Stochastic semigroups in the presence of measurements:}
Equation \eqref{eq:Master1} only describes the evolution of a quantum system without reading measurement outcomes coming from the instruments. Taking them into account leads to a stochastic process $\rho^\gamma = \left( \rho_t^\gamma \ ; \ t \geq 0 \right)$ called a quantum trajectory and which takes values in density matrices. This process is solution to an SDE called a \emph{stochastic quantum master equation}. The drift part of this SDE is given by $\Lc_\gamma (\rho_t^\gamma)$. The noise part results from conditioning upon the measurement outcomes. Such models are often used to describe experiments in quantum optics -- see \cite{wisemanmilburn,BRE02}. In the present article we limit ourselves to diffusive quantum trajectories. In that case the SDE takes the form, in the Itô convention,
\begin{equation}\label{eq:def_QTraj1}
d\rho_t^\gamma=\Lc_\gamma (\rho_t^\gamma)+\sigma_\gamma(\rho_t^\gamma)dW_t,
\end{equation}
where the volatility $\sigma_\gamma$ is a quadratic function of density matrices (see Section \ref{sec:def_QTraj} for the exact expression) and $W$ is a standard Brownian motion. The average evolution of $\rho^\gamma$ solution of \eqref{eq:def_QTraj1} is given by the solution $\overline{\rho}^\gamma$ of \eqref{eq:Master1}.

SDEs such as \eqref{eq:def_QTraj1} where first introduced as effective stochastic models for wave function collapse -- see \cite{gisin,pearle1984comment,diosi} and references therein. They were used as well as their Poisson noise version as a numerical tool to compute the average evolution $\overline{\rho}^\gamma$ in \cite{dalibard, GisinPercival1992}. Since then, different justifications were given for the fact that they model quantum systems which are subject to continuous indirect measurements. Historically, the first one is based on quantum stochastic calculus and quantum filtering \cite{belavkin89}. In that setting, the interaction of the measurement apparatus and the environment with the open system is unitary and described by a quantum SDE \cite{par92}. We refer the reader to \cite{bouten} for an accessible introduction to quantum filtering.

 A second, more phenomenological, approach \cite{barchielliholevo,BarchielliGregoratti09} starts with a linear SDE extending the deterministic linear equation \eqref{eq:Master1}. By normalizing the resulting process in the set of positive semidefinite operators, and after a Girsanov transform, one obtains the SDE \eqref{eq:SDE} for density matrices.
 
  Another approach is based on the continuous-time limit of fast quantum repeated  measurements. 
  Introducing proper scaling, discrete time quantum trajectories converge weakly, in the continuous time limit, towards processes solution of SDEs such as \eqref{eq:SDE} -- see \cite{pellegrini3, bbbcontinuous} and references therein.
  

\subsection{Strong noise limits in the literature}
The large $\gamma$ limit for quantum trajectories has recently attracted a lot of attention. This limit is interpreted as a strong measurement regime. Let us discuss the phenomenon with the guiding intuition of Fig.~\ref{fig:threeState}. Interestingly while quantum trajectories are continuous in time for any finite $\gamma>0$, in the strong noise limit $\gamma \rightarrow \infty$, one observes a limiting process concentrated on particular states and which is jumping between them. These states are called pointer states. See for example \cite{Haroche,flindt2009universal} for experiments and \cite{bauer2015computing,ballesteros2017} for theoretical works. Surprisingly, these jumps do not come alone. They are decorated by spikes (or shards) as made explicit in \cite{tilloy2015spikes}. In a $2$-level system, the existence of these spikes has been extensively studied (see \cite{tilloy2015spikes,bauer2016zooming,bauer2018stochastic} and recently in \cite{bernardin2018spiking}).

\begin{figure}[htp!]
\includegraphics[scale=0.32]{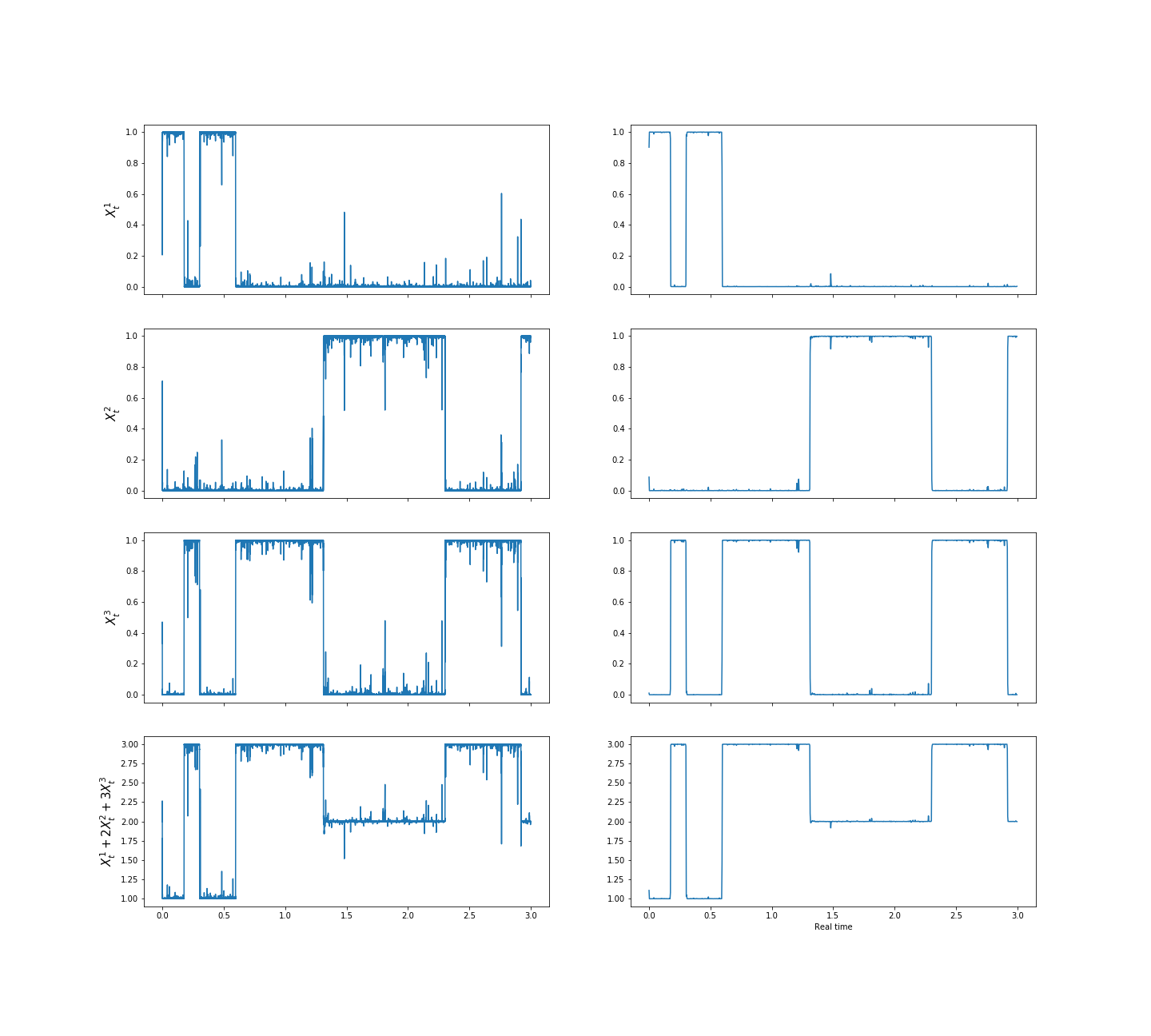}
\caption{Numerical simulation of the diagonal process $X^\gamma = \left( \rho^\gamma_{i,i} \ ; \ i=1,2,3 \right)$ on the left, and its smoothing on the the right. We have $\gamma = 10^4$ and there are $10^6$ time steps. Smoothing is via averaging over $1000$ steps. The first three rows display each coordinate $X^\gamma_{i}$ for $i=1,2,3$. The last row displays $X^\gamma_{1} + 2 X^\gamma_{2} + 3 X^\gamma_{3}$ which concentrates most of time on $\{1,2,3\}$ -- as $X^\gamma$ concentrates the corresponding point of the pointer basis $\{e_1,e_2,e_3\}$. The code is available at the online repository
\newline
\url{https://github.com/redachhaibi/quantumCollapse} \ 
\newline
\newline
In the notation of the upcoming SDE \eqref{eq:SDE}, with $d=3$, we take no Hamiltonian i.e. $H^{(0)}=H^{(2)}=0$, no intermediary scale i.e. $\Lc^{(1)}=0$, and for $\Lc^{(2)}$, a single Kraus operator i.e. $\ell_2 = 1$ with efficiency $\eta_{\alpha=2}(1)=1$, given explicitly  by $L_1^{(2)}=\diag(1,2,3)$. Regarding the lowest order term, $\Lc^{(0)}$ is defined in terms of the $\ell_0=9$ Kraus operators $L_k^{(0)}$ given by the $9$ elementary matrices $E_{i,j}$, $1\le i,j\le 9$, and we take all the efficiencies $\eta_{\alpha=0}(k)$ equal to $0$. In this weak-coupling set-up \cite{davies1976markovian}, the diagonal $X^\gamma$ of $\rho^\gamma$ evolves autonomously according to the SDE $dX^\gamma_t = R X_t^\gamma dt + 2 \gamma \left[ L_1^{(2)} X_t^\gamma - \left\langle L_1^{(2)} X_t^\gamma , \mathds{1} \right\rangle X^\gamma_t  \right] dW_t$, where $(W_t)_{t\ge 0}$ is a standard Brownian motion, $R$ is the $3$-square matrix such that $R_{i,j}= 1 -3\delta_{i,j}$ and $\mathds{1}$ is the vector with all coordinates equal to one.}
\label{fig:threeState}
\end{figure}

A crucial issue in the question of convergence of the quantum trajectories to a jump process is the choice of an appropriate topology. The usual Skorokhod topology on càdlàg functions appears to be useless. Indeed, without even taking into account the spikes, there is the following basic obstruction. As underlined by \cite[Theorem 13.4]{billingsley2013convergence}, weak convergence of processes with continuous paths in the Skorokhod topology yields a limiting process with continuous paths. As a consequence, there is no hope to obtain a limit in this excessively strong topology.

The models in \cite{bernardin2018spiking} are focused on $2$-level systems. One crucial feature is that these models can be recast into $1$-dimensional diffusions $X^\gamma = \left( X^\gamma_t \ ; \ t \geq 0 \right)$. In particular the main result of \cite{bernardin2018spiking}, stated in Theorem 2.1, proves a two-fold convergence. The first half of the theorem states that the process $X^\gamma$ converges in an averaged sense to a two-state Markov jump process $X$. The exact statement is essentially equivalent to:
\begin{align}
\label{eq:smoothed_cv}
\lim_{n \to \infty}\int_0^\infty f(t, X_t^\gamma)e^{-t}dt & = \int_0^\infty f(t, X_t) e^{-t}dt \quad \textrm{a.s.},
\end{align}
for any continuous bounded function $f$. The second half of the theorem states that the graph of $X^\gamma$ converges in the Hausdorff topology to an explicit random set, which captures the spikes in the strong $\gamma$ regime. Both convergences are almost sure thanks to an ad-hoc but convenient coupling. Moreover, their proofs rely heavily on the use of local times, scale functions and the Dambis--Dubins--Schwarz Theorem, which are one-dimensional techniques for diffusions. Unfortunately, these tools are not adapted to the higher dimensional problem, which is still a challenge.

\subsection{Our contribution}
In the present paper, we generalize to arbitrary finite dimensions the first half of the aforementioned \cite[Theorem 2.1]{bernardin2018spiking}, that is to say the convergence towards the jump process between pointer states. In order to have an intrinsic proof, it is desirable to invoke the classical machinery of weak convergence of stochastic processes and to avoid using any coupling. Also, since we focus only on the jumps between the states, the spikes need to be discarded. As shown in \cite{bauer2016zooming,bernardin2018spiking}, only countably many spikes appear in the limit, each being infinitely thin. As such, spikes are of zero Lebesgue measure and disappear upon averaging in Eq.~\eqref{eq:smoothed_cv}. Therefore, an ideal candidate for this task is the topology of convergence in (Lebesgue) measure. 


The study of the convergence in law of stochastic processes in this topology was pioneered by Meyer and Zheng in \cite{MZ} -- see \cite{Kurtz} for further developments and \cite{rebolledo1987topologie} for an application to weak noise limits. As we shall see, a path $X^\gamma$ converges to $X$ in the Meyer-Zheng topology if and only if Eq.~\eqref{eq:smoothed_cv} holds. 
This topology is also called pseudo-paths topology and is much weaker than the usual Skorokhod topology. 

Our main result is stated in Theorem \ref{thm:main}. It shows that in the Meyer-Zheng topology, in the limit of large $\gamma$, the quantum trajectory we study converges in law to a Markov process on the pointer states with explicit rates. Not only this provides an extension but also a mathematically complete and rigorous proof of the pioneering works of \cite{bauer2015computing}.

We also establish a general homogenization result for semigroups on finite-dimensional Hilbert spaces that is instrumental to the proof of Theorem \ref{thm:main}. In the usual homogenization references such as \cite{P78,CD99,pavliotis2008multiscale, BLP}, there is a trivial distinction  between  a slow and a fast variable and it is then assumed that by fixing the slow variable the fast process is ergodic. The novelty of our homogenization result is that it holds for abstract semigroups and moreover the state space is not a priori the direct product of slow and fast variables. In particular, we show that it applies to semigroups generated by Lindbladians $\Lc_\gamma$. 

\subsection{Structure of the paper}

The article is structured as follows. 

In Section \ref{sec:conv_traj}, we start by defining the mathematical objects we study and state our two working Assumptions \ref{ass:(Nd)} and \ref{ass:(Id)}. Then, in Subsection \ref{subsec:main} we introduce the Meyer-Zheng topology in Definition \ref{def:MZtopology} and state our main result in Theorem \ref{thm:main}. We conclude the section with some remarks on the main result. 

Section \ref{section:homogenization} is devoted to the abstract homogenization result and its application to Lindbladians.

We finally prove our main result in Section \ref{section:proof}.

\section{Main result}
\label{sec:conv_traj}

\subsection{Notation}
\label{subsec:not}
We denote by $\langle\cdot, \cdot \rangle$ the standard scalar product on $\C^d$ and $\Vert \cdot\Vert$ the corresponding norm, $M_d(\C)$ the set of $d\times d$ complex matrices, $X^*$ the conjugate transpose of $X\in M_d(\C)$. The Hilbert-Schmidt inner product $(X,Y)\in M_d(\C)^2 \mapsto \langle X, Y\rangle :=\tr (X^* Y)$  transforms $M_d(\C)$ into a Hilbert space and the associated norm is also denoted by $\Vert \cdot \Vert$. The set $\Sc=\{\rho\in M_d(\C) \ | \ \rho\geq 0, \ \tr\rho=1\}$ is a compact convex set whose elements are called density matrices. For any two matrices $X, Y$ of $M_d(\C)$, $[X,Y] := XY - YX$ is the commutator while $\{X, Y\} := XY + YX$ is the anti-commutator. An endomorphism $\Lc$ on $M_d (\C)$ is sometimes called a super-operator while a matrix $X\in M_{d} (\C)$ is called an operator. The algebra of super-operators $\left({\rm{End}} (M_d(\C)), +, \circ\right)$ is equipped with the operator norm (with respect to the Hilbert-Schmidt norm on $M_d (\C)$) and denoted also by $\Vert \cdot \Vert$. We usually reserve the notation $\circ$ for super-operators to emphasize the distinction with operators. The adjoint w.r.t the Hilbert-Schmit scalar product of a linear operator $\Lc: M_d (\C) \to M_d (\C)$ is denoted by $\Lc^*$. For $x\in \C^\ell$ and $A:=(A_k)_{k=1}^\ell \in M_d(\C)^\ell$, we denote $A\cdot x=\sum_{k=1}^\ell A_k x_k$ and the action of $\Lc \in {\rm{End}} (M_d (\C))$ on such $A$ is understood component-wise, i.e. $\Lc (A)= (\Lc (A_k))_{k=1}^\ell$. If $(a_t)_{t \in \R_+}$ is a continuous-time stochastic process defined on a probability space $(\Omega, {\mathcal F}, {\mathbb P})$ we denote $\| a \|_\infty := \esssup_{\omega \in \Omega} \sup_{t \in \R_+} |a_t (\omega)|$.

\subsection{Definitions}

\subsubsection{Lindbladians}
By definition a Lindbladian $\Lc: M_d(\C) \rightarrow M_d(\C)$ is the generator of a continuous semigroup of completely positive trace-preserving maps, which describes the Markovian evolution of a quantum open system. 
Following \cite{gorini,lindblad}, a Lindblad super-operator admits a GKSL\footnote{The acronym GKSL stands for Gorini-Kossakowski-Sudarshan-Lindblad.} decomposition i.e. for all $X \in M_d (\C)$:
\begin{align}
\label{def:gksl}
\Lc(X) = & \ -\imath [H,X]+\sum_{k=1}^\ell  \left(L_k XL_k^*-\tfrac12\{L_k^*L_k,X\} \right) \ ,
\end{align}
where $(H,(L_k)_{k=1}^\ell)$ are matrices of $M_{d} (\C)$ such that $H^*=H$. We call the first matrix, $H$, the Hamiltonian and the operators $(L_k)_{k=1}^\ell$, Kraus operators{\footnote{By analogy with discrete quantum channels.}}.

\subsubsection{Diffusive quantum trajectories with three time scales}\label{sec:def_QTraj}
In this paper, for any $\gamma>0$, we consider diffusive quantum trajectories given by the It\^o SDE
\begin{align}
\label{eq:SDE}
d\rho^\gamma_t = & \ \Lc_\gamma(\rho^\gamma_{t})dt + \sum_{\alpha=0,1,2} \gamma^{\frac{\alpha}{2}} \sigma^{(\alpha)} (\rho^\gamma_t)\cdot dW^\alpha_t\\
               = & \ \Lc_\gamma(\rho^\gamma_{t})dt
                   + \sigma^{(0)} (\rho^\gamma_t)\cdot dW^0_t
                   + \gamma^{\frac{1}{2}} \sigma^{(1)} (\rho^\gamma_t)\cdot dW^1_t
                   + \gamma \sigma^{(2)}  (\rho^\gamma_t)\cdot dW^2_t
                   \nonumber
\end{align}
with initial condition $\rho^\gamma_0  = \varrho \in {\mathcal S}$. Throughout the paper, the Itô convention for SDEs is in place. The drift $\Lc_\gamma(\rho^\gamma_{t})$ of this equation is given by the Lindblad super-operator $\Lc_\gamma$ having the form  
\begin{equation}
\label{eq:perturbed_lind}
\Lc_\gamma := \Lc^{(0)} + \gamma \Lc^{(1)} + \gamma^2 \Lc^{(2)} \ .
\end{equation}

We  denote by $(H^{(0)},(L^{(0)}_k)_{k=1}^{\ell_0})$, $(H^{(1)},(L^{(1)}_k)_{k=1}^{\ell_1})$ and $(H^{(2)},(L^{(2)}_k)_{k=1}^{\ell_2})$ the GKSL decompositions of the Lindbladians $\Lc^{(0)}$, $\Lc^{(1)}$ and $\Lc^{(2)}$ respectively.

In the noise part $\sum_{\alpha=0,1,2} \gamma^{\frac{\alpha}{2}} \sigma^{(\alpha)} (\rho^\gamma_t) \cdot dW^\alpha_t$, the processes $\left( W^\alpha, \alpha=0,1,2 \right)$ are independent $\ell_\alpha$-dimensional (standard) Wiener processes and the maps $\sigma^{(\alpha)}: \rho\in\Sc \mapsto\left(\sigma^{(\alpha)}_k(\rho) \right)_{k=1}^{\ell_{\alpha}} \in M_d(\C)^{\ell_\alpha}$ are the three quadratic maps defined component-wise by
\begin{align}
\label{eq:def_sigma}
\sigma_k^{(\alpha)}  (\rho) = & \ {\sqrt{\eta_\alpha (k)}}\, \left(L^{(\alpha)}_k \rho+\rho L^{(\alpha)\, *}_k-\tr[(L^{(\alpha)\, *}_k+L^{(\alpha)}_k)\rho]\, \rho \right)  \ , 
\end{align}
for $\alpha=0,1,2$ and $ k=1, \ldots, \ell_\alpha$. Here $(\eta_\alpha(k))_{k=1}^{\ell_\alpha}\in [0,1]^{\ell_\alpha}$ are given numbers. We refer the reader to the previous notation section (Section \ref{subsec:not}) for the meaning of $\sigma^{(\alpha)} (\rho^\gamma_t) \cdot dW^\alpha_t$.

The proof of existence and uniqueness of the strong solution to Eq.~\eqref{eq:SDE} can be found in \cite{barchielliholevo, pellegrini1, BarchielliGregoratti09, pellegrini3}. In these references, it is also proven that $\rho^\gamma\in C \left( {\mathbb R}_+ \, ; \, \Sc\right)$ almost surely.

Since the SDE \eqref{eq:SDE} has a linear drift, it follows that the average evolution of $\rho^\gamma$ is expressed in terms of the semigroup generated by $\Lc_\gamma$:
 \begin{align}
\label{eq:expectation_rho}
\forall t\ge 0, \quad \E(\rho_t^\gamma) = & \ e^{t\Lc_\gamma}\varrho \ .
\end{align}
The asymptotic analysis of this average semigroup will in fact play a crucial role in the proof of the main result, see Proposition \ref{proposition:Lindblad_perturbation}.

In terms of interpretation of indirect measurement, the Wiener process $W^\alpha$ results from the output signal of measurements. The numbers $\eta_\alpha (k)$ are introduced in order to encapsulate in a single form the measurement and thermalization aspects. More precisely $\eta_\alpha (k)=1$ corresponds to perfectly read measurements, $\eta_\alpha (k) \in (0,1)$ to imperfectly read measurements and $\eta_\alpha (k)=0$ to unread measurements or to model contributions from a thermal bath.

\subsubsection{Assumptions}
Let us now state and discuss our working assumptions for the main result.
\begin{assumption}[Quantum Non-Demolition (QND) assumption]
\label{ass:(Nd)}
The operators $H^{(2)}$, $(L^{(2)}_k)_{k=1}^{\ell_2}$ and $(L^{(1)}_k)_{k=1}^{\ell_1}$ are all diagonalizable in a common orthonormal basis $(e_i)_{i=1}^d$ of $\C^d$, called the pointer basis. 
\end{assumption}

Observe that no assumption is made on the Hamiltonian $H^{(1)}$ nor on the Kraus operators and Hamiltonian involved in $\Lc^{(0)}$. Also, Assumption \ref{ass:(Nd)} is equivalent to requiring that the $\ast$-algebra generated by the Kraus operators $(L^{(2)}_k)_{k=1}^{\ell_2}$, the Hamiltonian $H^{(2)}$ as well as the Kraus operators $(L^{(1)}_k)_{k=1}^{\ell_1}$ is commutative.\footnote{In other words a $C^*$ commutative subalgebra of $M_d (\C)$.}

From a physical perspective, the QND assumption is standard. It is at the cornerstone of the experiment \cite{Haroche} where QND measurements are used to count the number of photons in a cavity without destroying them. It is shown that it reproduces the wave function collapse in long time -- see \cite{bauer2011convergence,bbbqnd,bpqnd} and references therein. This condition is tailored to preserve the pointer states during the quantum measurement process. More precisely, under the QND Assumption \ref{ass:(Nd)}, in the case $\Lc^{(0)}=\Lc^{(1)}=0$, if the initial state is a pointer state, i.e. $\varrho \in \{ E_{i,i} := e_i e_i^* \}_{i=1}^d$, then it is not affected by the indirect measurement in the sense that the state remains unchanged by the stochastic evolution~\eqref{eq:SDE}. Note that this behavior is very specific to such models since measurement usually induces a feedback on the quantum system. 

A simple computation detailed in Lemma \ref{lemma:lindblad_structure} below shows then that under the QND  Assumption \ref{ass:(Nd)}  the super-operator $\Lc^{(2)}: M_d(\C) \rightarrow M_d(\C)$ is diagonalizable with eigenvectors $\left( E_{i,j}:=e_i e_j^* \right)_{ i,j =1}^d$ and associated eigenvalues:
\begin{align}
\label{eq:L_eigenequation2}
 \tau_{i,j}  = \ & - \half \sum_{k=1}^\ell \left| (L^{(2)}_k)_{i,i} - (L^{(2)}_k)_{j,j} \right|^2 \\
                 & \quad \quad - \imath \left( H^{(2)}_{i,i} - H^{(2)}_{j,j}
                   + \sum_{k=1}^\ell \Im \left( \overline{(L^{(2)}_k)_{i,i}} (L^{(2)}_k)_{j,j} \right) \right)
                   \ \nonumber.
\end{align}

\medskip

Here and in the following, if $X \in M_d (\C)$, the notation $X_{i,j}$ always refer to the coordinates of $X$ in the pointer basis $(e_i)_{i=1}^d$. Observe also that the family $(E_{i,j})_{i,j=1}^d$ forms an orthonormal basis of $M_d (\C)$.

\begin{rmk}
In the literature two notions of quantum non-demolition exist. The notion we use refers to a measurement process where particular system-states (pointer states) are not affected (non-demolished) by the measurement process. This notion appeared in physics in the early eighties \cite{Braginsky547}. For a recent mathematical approach the reader can consult \cite{bbbqnd, bpqnd}. The other notion of non-demolition was introduced by Belavkin \cite{belavkin1992quantum, belavkin1994nondemolition} in the context of quantum filtering. There, non-demolition refers to commutative sets of operators which give observables that we condition upon. The resulting conditional probability is required for a properly defined quantum measurement process \cite{bouten, BoutenvHandelMatthew2009}.
\end{rmk}

\begin{assumption}[Identifiability condition]
\label{ass:(Id)}
For any $i,j\in\{1,\dotsc, d\}$ such that $i\neq j$, there exists $k\in \{1,\dotsc,\ell_2\}$ such that $\eta_2(k)>0$ and 
$$\Re ( L_k^{(2)} )_{i,i} \neq \Re ( L_k^{(2)} )_{j,j} \ .$$
\end{assumption}

In fact, from Eq.~\eqref{eq:L_eigenequation2}, this assumption together with the QND assumption imply the non-existence of purely imaginary eigenvalues $\tau_{i,j}$ for the super-operator $\Lc^{(2)}$. We shall see that this will play an important role.

Our motivation to qualify this assumption as identifiability originates again from the theory of non-demolition measurements. Indeed, following \cite{bbbqnd,bpqnd}, if the QND Assumption \ref{ass:(Nd)} and the identifiability condition of Assumption \ref{ass:(Id)} hold, for any $\gamma>0$, the quantum trajectory obtained when setting $\Lc^{(0)}=\Lc^{(1)}=0$ converges almost surely, as $t$ grows, to a random pointer state, reproducing a non-degenerate projective measurement along the pointer basis. If the identifiability Assumption \ref{ass:(Id)} does not hold, the limiting random state may exist but will correspond to a degenerate measurement.

\subsection{Statement}\label{subsec:main}

Before stating the main result of this article, which addresses the weak convergence as $\gamma$ goes to infinity of $(\rho^\gamma)_{\gamma>0}$ to a pure jump Markov process, we need to introduce the topological setting in which this convergence will hold. 

\begin{definition}[Meyer-Zheng topology]
\label{def:MZtopology}
Consider a Euclidean space $(E, \Vert\cdot\Vert)$ and denote by ${\mathbb L}^0:={\mathbb L}^0 (\R_+ ;  E)$ the space of $E$-valued Borel functions  on $\R_+$\footnote{To be more precise ${\mathbb L}^0$ is a quotient space where two functions are considered as equal if they coincide almost everywhere with respect to the Lebesgue measure.}. Given a sequence $(w_n)_{n\ge 0}$ of elements of ${\mathbb L}^0$, the following assertions are equivalent and define the convergence in Meyer-Zheng topology of  $(w_n)_n$ to $w\in {\mathbb L}^0$:
\begin{itemize}
\item For all bounded continuous functions $f: \R_+ \times E \rightarrow \R$, 
$$
\lim_{n \rightarrow \infty}
\int_0^\infty f\left(t, w_n(t) \right) e^{-t} dt \ 
=
\int_0^\infty f\left(t, w(t) \right) e^{-t} dt \ .
$$
\item For $\lambda(dt) = \ e^{-t} dt$, we have that for all $\varepsilon > 0$, 
$$ \lim_{n \rightarrow \infty}
   \lambda\left( \left\{ s \in \R_+ \ | \ \left\Vert w_n(s) - w(s) \right\Vert \geq \varepsilon \right\} \right)
   = 0 \ ,
$$ 
\item $ \lim_{n \rightarrow \infty} {\rm d} \left( w_n, w \right) = 0$
where $\rm d$ is defined by
\begin{equation}
\label{eq:distanced}
{\rm d} (w,w') := \int_0^\infty \Big\{1 \wedge \left\| w(t) - w'(t) \right\| \Big\} \ e^{-t} \ dt \ .
\end{equation}
\end{itemize}
The distance ${\rm d}$ metrizes the Meyer-Zheng topology on ${\mathbb L}^0$ and $({\mathbb L}^0, {\rm d})$ is a Polish space.

\end{definition}

\begin{proof}[Pointers to the proof]
The equivalence between the two first statements is \cite[Lemma 1]{MZ}. The equivalence with the third statement is a standard exercise.
\end{proof}

Notice that the first statement in Definition \ref{def:MZtopology} is exactly Eq.~\eqref{eq:smoothed_cv}, demonstrating that we have the correct setting for:

\begin{thm}[Main theorem]
\label{thm:main}
For any $\gamma>0$ let $\rho^\gamma$ be the continuous processes on $\Sc$ solution of Eq.~\eqref{eq:SDE} starting from $\varrho$. Under the QND Assumption \ref{ass:(Nd)} and the identifiability condition in Assumption  \ref{ass:(Id)}, we have{\footnote{We recall that if $\chi$ is a topological space and $(X_n)_n$ is a sequence of $\chi$-valued random variables, we say it converges weakly (or in law) to the $\chi$-valued random variable $X$ if and only if for any bounded continuous function $f:\chi \to \mathbb R$, $\lim_{n \to \infty} {\mathbb E} \big[ f(X_n)\big] ={\mathbb E} \big[ f(X)\big]$. }}:
$$ 
   \lim_{\gamma\to\infty} \rho^\gamma = \xb \xb^* ,\quad \textrm{weakly in }
   \left( {\mathbb L}^0 (\R_+;M_d(\C)), \ {\rm d} \right) 
   \ ,
$$
where $\xb := \left( \xb_t \ ; \ t \geq 0 \right)$ is a pure jump continuous-time Markov process on the pointer basis $(e_i)_{i=1}^d$ with initial distribution $\mu_{\varrho}$ defined by 
$$ \mu_{\varrho}: e_i \mapsto \langle e_i,\varrho e_i\rangle \ .$$

Furthermore, the generator $T$ of the Markov process $\xb$ is explicit. The transition rate{\footnote{Observe that the transition rate is independent of the $L^{(1)}_k$'s and of $H^{(0)}$. It depends of course of $H^{(1)}, L^{(0)}_k, L_k^{(2)}$ but also of $H^{(2)}$ through the eigenvalues $\tau_{i,j}$ given in Eq.~\eqref{eq:L_eigenequation2}.}} from $e_i$ to $e_j$, $i\ne j$, is given by  
\begin{equation}
\label{eq:T-rates}
T_{i,j}
   = \sum_{k=1}^{\ell_0} |( L_k^{(0)})_{j,i}|^2
   + \frac{\left| \left( H^{(1)} \right)_{i,j} \right|^2}{|\tau_{i,j}|^2} \sum_{k=1}^{\ell_2} \left| (L_k^{(2)})_{i,i} - (L_k^{(2)})_{j,j} \right|^2 \ .
 \end{equation}    
Here $\tau_{i,j}$ is the eigenvalue of $\Lc^{(2)}$ corresponding to the eigenvector $E_{i,j}=e_i e_j^*$ given in Eq.~\eqref{eq:L_eigenequation2}.
\end{thm}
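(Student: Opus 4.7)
The plan is to combine the Meyer-Zheng tightness criterion with a martingale-problem identification of the limit, using the abstract homogenization of Section \ref{section:homogenization} applied to the averaged semigroup $e^{t\Lc_\gamma}$ as the main analytical input. As a preliminary step I would show that the off-diagonal part $N(\rho^\gamma) := \rho^\gamma - D(\rho^\gamma)$, where $D$ denotes the projection onto diagonal matrices in the pointer basis, converges to zero in $({\mathbb L}^0, {\rm d})$. Since convergence in the Meyer-Zheng topology is equivalent to convergence in measure with respect to $e^{-t}dt$, this reduces to the second-moment estimate $\int_0^\infty e^{-t}\,\E\|N(\rho^\gamma_t)\|^2\,dt \to 0$, which I would obtain by applying the homogenization machinery a second time to the linear evolution of $\E[\rho^\gamma_t\otimes \rho^\gamma_t]$; the generator on the two-fold tensor product includes a contribution $\gamma^2 \Lc^{(2)} \otimes I + \gamma^2 I \otimes \Lc^{(2)}$ whose off-diagonal sector is spectrally separated from the kernel by Assumption~\ref{ass:(Id)}.

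Next I would establish tightness of the diagonal part $D(\rho^\gamma)$ in $({\mathbb L}^0, {\rm d})$ via the Meyer-Zheng conditional-variation criterion. Marginal tightness is automatic since $D(\rho^\gamma)$ lives in the compact simplex of diagonal density matrices, so it remains to uniformly bound the expected total variation of the drift of each $\rho^\gamma_{i,i}$. The apparently divergent $\gamma^2$-contribution is absent because $\Lc^{(2)*}(E_{i,i})=0$ by Assumption~\ref{ass:(Nd)} and Lemma~\ref{lemma:lindblad_structure}; furthermore, since the $L^{(1)}_k$ are diagonal, the Kraus part of $\Lc^{(1)*}(E_{i,i})$ vanishes and only $\imath[H^{(1)}, E_{i,i}]$ remains, which is purely off-diagonal. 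The drift of $\rho^\gamma_{i,i}$ is thus $\langle \Lc^{(0)*} E_{i,i}, \rho^\gamma_s\rangle + \gamma \langle \imath[H^{(1)}, E_{i,i}], \rho^\gamma_s\rangle$, and Proposition~\ref{proposition:Lindblad_perturbation} combined with~\eqref{eq:expectation_rho} gives that $\E[\rho^\gamma_s]$ has off-diagonal components of order $\gamma^{-1}$ for $s$ away from $0$, which is exactly what is needed to compensate the factor $\gamma$ in the second summand.

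To identify any weak limit I would set up a perturbed martingale problem. For smooth $f : \Sc \to \R$, apply It\^o's formula to $f^\gamma := f + \gamma^{-1} f_1 + \gamma^{-2} f_2$, choosing correctors $f_1, f_2$ to solve cell problems that cancel the terms of order $\gamma^2$ and $\gamma$ in the SDE generator applied to $f^\gamma$. These cell problems amount to inverting $\Lc^{(2)}$ on the off-diagonal sector, a well-defined operation since Assumption~\ref{ass:(Id)} ensures $\tau_{i,j}\ne 0$ for $i\ne j$, with eigenvalues given by~\eqref{eq:L_eigenequation2}; this inversion is precisely where the factor $|\tau_{i,j}|^{-2}$ of the rate matrix is produced. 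The residual order-one drift, coming from the direct action of $\Lc^{(0)}$ on pointer states together with the homogenized double-commutator $\Lc^{(1)} \circ (\Lc^{(2)})^{-1} \circ \Lc^{(1)}$ of the intermediate scale and the $\gamma^2$-diffusion term from $\sigma^{(2)}$ entering It\^o's second derivative, should match exactly the rate matrix $T$ of~\eqref{eq:T-rates}. Passing to the limit in the martingale equation for $f^\gamma(\rho^\gamma)$ using tightness and the uniform smallness of $f^\gamma - f$ yields the martingale problem for $T$ on the pointer states; its unique solution is the Markov jump process $\xb$ with initial law $\mu_\varrho$.

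The main obstacle I anticipate is the clean construction of globally defined correctors $f_1,f_2$ on $\Sc$ with derivatives bounded uniformly in $\gamma$, together with the verification that the cross-terms arising from It\^o's formula --- in particular the $\gamma^2$-diffusion from $\sigma^{(2)}$ --- conspire to produce exactly the second summand in~\eqref{eq:T-rates} without spurious contributions from the other Kraus operators. The abstract homogenization result of Section \ref{section:homogenization} should supply the spectral and resolvent bounds needed to make this rigorous and to close the $O(\gamma^{-1})$ remainders.
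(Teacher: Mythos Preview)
Your overall architecture matches the paper's (off-diagonal vanishing, then Meyer--Zheng tightness of the diagonal, then identification), but the central analytical step is not secured. The key estimate needed both for the vanishing of $\Pi_\perp\rho^\gamma$ and for bounding the conditional variation of $\Pi\rho^\gamma$ is $\gamma\int_0^\tau \E\|\Pi_\perp\rho^\gamma_s\|\,ds = O(1)$, i.e.\ a bound on the expectation of the \emph{norm} of the off-diagonal part. You propose to get this from the homogenization of $\E[\rho^\gamma_t\otimes\rho^\gamma_t]$, but that quantity does \emph{not} evolve by a linear semigroup: the volatilities $\sigma^{(\alpha)}(\rho)$ in~\eqref{eq:def_sigma} are quadratic in $\rho$, so the It\^o correction $\sum_\alpha \gamma^\alpha\,\sigma^{(\alpha)}(\rho)\otimes\sigma^{(\alpha)}(\rho)$ is quartic and the moment hierarchy does not close at second order. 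Likewise, in your tightness argument you invoke Proposition~\ref{proposition:Lindblad_perturbation} to say that $\E[\rho^\gamma_s]$ has off-diagonal entries of order $\gamma^{-1}$; but the conditional variation involves $\E\big[\,|\langle \imath[H^{(1)},E_{i,i}],\rho^\gamma_s\rangle|\,\big]$, with the absolute value \emph{inside} the expectation, so control of $\|\E[\Pi_\perp\rho^\gamma_s]\|$ is not enough.

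The paper obtains this estimate by an entirely different and rather delicate route: it represents $\rho^\gamma$ as a perturbation of an explicit stochastic flow $T^\gamma_{s,t}$ acting diagonally on the basis $(E_{i,j})$, with $|[T^\gamma_{s,t}]_{i,j}|$ equal to $e^{-\frac12(t-s)D^\gamma_{i,j}}$ times a Dol\'eans--Dade exponential (Proposition~\ref{proposition:traj_decomposition}), and then controls the resulting stochastic integral remainder pathwise via a change of measure, the BDG inequality, and a Gr\"onwall-type Lyapunov argument (Lemma~\ref{lemma:lyapounov}). Your identification step via perturbed test functions is a genuinely different alternative to the paper's finite-dimensional-distribution argument (which linearizes $f$ on pointer states and uses only the convergence of the mean and the Markov property), and could in principle work; but it, too, would need the pathwise off-diagonal decay above as input, together with an analogue of Lemma~\ref{lemma:Loc} ensuring the process spends vanishing time away from the pointer states, a point you do not address.
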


\begin{proof}[Strategy of proof and structure of the paper] The approach is structured as follows.  

In Section \ref{section:homogenization} we state the general homogenization Theorem \ref{thm:homogenization} for semigroups. The proof follows the philosophy pioneered by Nakajima-Zwanzig. Then we apply it to the case of Lindblad super-operators in Proposition \ref{proposition:Lindblad_perturbation}. Let us mention \cite[Theorem 2.2]{ballesteros2017} as an inspiration for the proof and that our result is consistent with \cite{macieszczak2016towards,albert2016geometry}. There, we show that in the large $\gamma$ limit, the dynamic of the semigroup $e^{t\Lc_\gamma}$ reduces to a dynamic generated by an operator $\Lc_\infty$ whose expression is explicitly given in terms of  $\Lc^{(0)}$, $\Lc^{(1)}$ and $\Lc^{(2)}$. Thanks to Eq.~\eqref{eq:expectation_rho}, this leads to the convergence of the mean $\E\left( \rho^\gamma_t \right)$. Although this may seem to be very partial information, it is sufficient to identify the generator $T$.

In Section \ref{section:proof} we give the proof of our Main Theorem \ref{thm:main}. The proof follows the usual approach for the weak convergence of stochastic processes: we use a tightness criterion in the Meyer-Zheng topology and then identify the limit via its finite-dimensional distributions. Interestingly, the convergence of the mean is bootstrapped to the convergence of finite-dimensional distributions thanks to the Markov property and the collapsing on pointer states $( E_{i,i} )_{i=1}^d$. A more detailed sketch of the proof is given at the beginning of Section~\ref{section:proof}.
\end{proof}

\subsection{Further remarks}\label{sec:further remarks}
\ 
\medskip

{\bf Convention on $\Lc$ and $T$:} The Markov generator $T$ defined in Eq.~\eqref{eq:T-rates} follows the usual probabilistic convention in the sense that $T \mathds{1}=0$. On the contrary, to simplify notations, for the various Lindblad generators $\Lc$, we use the convention that they generate trace-preserving maps, thus their duals with respect to the Hilbert--Schmidt inner product verify $\Lc^*(\id)=0$, which is equivalent to $\tr\circ\Lc=0$.

\medskip

{\bf On finite-dimensional distributions: }
By definition of the weak convergence in ${\mathbb L}^0 (\R_+;M_d(\C))$, Theorem \ref{thm:main} is equivalent to saying that for all continuous bounded functions $f:\R_+ \times \Sc\to \R$,
$$\lim_{\gamma\to\infty}\int_0^\infty f(t, \rho^\gamma_t)\ e^{-t} dt=\int_0^\infty f(t, \xb_t \xb_t^*) \ e^{-t} dt\quad\text{weakly}.$$
Moreover, we have convergence of the finite-dimensional distributions of $\rho^\gamma$ in (Lebesgue) measure. More precisely, following \cite[Theorem 6]{MZ}, $(t_1,\dotsc,t_r)\mapsto\E(f(\rho_{t_1}^\gamma,\dotsc,\rho_{t_r}^\gamma))$ converges to $(t_1,\dotsc,t_r)\mapsto\E(f(\xb_{t_1}\xb_{t_1}^*,\dotsc, \xb_{t_r}\xb_{t_r}^* ))$ in ${\mathbb L}^1 (\R_+^r , \lambda^{\otimes r})$, for any continuous bounded function $f$ on $\Sc^r$. This convergence of finite-dimensional distributions is a rigorous formulation of the convergence stated in \cite{bauer2015computing}. This is detailed in Subsection~\ref{subsection:fd_distributions}.

\medskip

{\bf On the Meyer-Zheng topology:}
As mentioned in the introduction of \cite{MZ}, the space $\mathbb D$ of c\`adl\`ag functions is not Polish for this topology, since it is not even closed. Hence, one cannot invoke Prokohrov's Theorem in this set. However, the larger set of measurable functions is complete with respect to the distance $\rm d$. 

This brings forth another issue which is that weak limits for the Meyer-Zheng topology are not necessarily supported in $\mathbb D$ in general. In fact the central result in \cite{MZ} is a tightness criterion under which the weak limit of a given family of c\`adl\`ag processes is garanteed to be also c\`adl\`ag, which is exactly tailored for our need.

\medskip

{\bf Generalizations: }
Let us mention two possible extensions of the setting of this paper. In principle, our results and methods of proof carry to these cases mutatis mutandis. However, such extensions are not included as this would considerably decrease the readability of the paper.

In Eq.~\eqref{eq:perturbed_lind}, one could consider a further dependence in $\gamma$ by replacing $\Lc^{(0)}$ by $\Lc^{(0)}_\gamma$ such that a limit holds as $\gamma$ goes to infinity.

Also, throughout the paper, we limit ourselves to diffusive noises. But the setting can be extended to include Poisson noises in the SDE \eqref{eq:SDE}. In passing, let us mention an interesting result in this direction using a completely different method. In the particular case $\Lc^{(0)}=0$ and for Poisson noises only, instead of Wiener ones, an analogous result to the Main Theorem \ref{thm:main} is possible, building on \cite[Theorem 2.3 item (b)]{ballesteros2017}. Indeed, in that article, it is obtained that there exists $C>0$ such that for any $t>0$, $\E(\|\rho_t^\gamma-Y_t^\gamma\|)\leq C\gamma^{-\frac12}\sqrt{|\log\gamma|}$ with $(Y^\gamma)_{\gamma >0}$ converging in law to $\xb \xb^*$, in Skorokhod's topology, as $\gamma \to \infty$. Since $\Sc$ is compact, integrating both side of the inequality with respect to the probability measure $\lambda(dt) = e^{-t} dt$ on $\R_+$, the result follows from ${\mathbb L}^1\left( \R_+, \lambda \right)$ convergence.

\medskip

{\bf The noise vanishes on pointer states: } The following intuition dictates that the noise vanishing on pointer states is crucial in order to have emergence of jump processes from strong noise limits as in Theorem \ref{thm:main}. The idea is that, as $\gamma$ grows larger, the process $\rho^\gamma$ will spend more time in a thin layer around the points where the noise vanishes. Because of the QND Assumption \ref{ass:(Nd)} and the structure of the maps $\sigma_\alpha$ given in Eq.~\eqref{eq:def_sigma}, the noise in the SDE \eqref{eq:SDE} vanishes exactly on the pointer states $( E_{i,i} )_{i=1}^d$, hence the intuition of a limiting process taking values in $( E_{i,i} )_{i=1}^d$. 

%

\medskip

{\bf Reversibility properties of the limit generator $T$:} In the context of dynamics where there is a clear distinction between slow variables and fast variables, consider the reduced dynamic in slow variables, obtained by the elimination of fast variables by homogenization. A common belief in statistical physics is that such a reduced dynamic is in general ``more irreversible'' than the initial dynamic \cite{MACKEY,LEBOWITZ99,GORBAN-KARLIN,Lavis,BALIAN2005} -- and regardless of the reversibility of this initial dynamic. The seminal example is provided by the (irreversible) Boltzmann equation which is derived by a kinetic limit from a (reversible) microscopic dynamic ruled by Newton's equations of motion \cite{Cer88}. 

In our context, from both mathematical and physical perspectives, this leads naturally to ask the question of the links between the reversibility properties of the SDE \eqref{eq:SDE} and the reversibility properties of our effective Markov process  $\xb := \left( \xb_t \ ; \ t \geq 0 \right)$. This SDE is generically non-reversible but it may happen, in various situations, that the effective Markov process however is, highlighting a possible moderation to the aforementioned popular belief, at least in this particular quantum context. Indeed, it is for example easy to check  that if $H^{(1)}=0$ and there exists a probability $p:=(p_i)_{i=1}^d$ such that for any $1\le k \le \ell_0$, $p_i \vert \big( L_k^{(0)} \big)_{j,i}\vert^2= p_j \vert \big( L_k^{(0)} \big)_{i,j}\vert^2$, then $T$ is reversible with respect to the probability $p$, while the SDE is not. Therefore it would be interesting to understand what are the conditions to impose on the Kraus and Hamiltonian operators, and more importantly their physical meaning, in order to obtain a reversible $T$. In the previously mentioned example, the condition is reminiscent of the one resulting from a weak coupling limit \cite{ALICKI, Davies, AL07} of a quantum system interacting with a heat bath at thermal equilibrium, showing this condition has probably some deeper physical interpretation.

\section{On the homogenization of semigroups}
\label{section:homogenization}

In this section, we establish a general homogenization result which does not rely specifically on the Lindbladian structure of the operators involved.  Therefore in the next Subsection \ref{subsection:homogenization_thm}, the symbol $\Lc$ will denote generic linear operators on a finite-dimensional vector space $V$ which are not necessarily Lindbladians. 

\subsection{A general statement of independent interest}
\label{subsection:homogenization_thm}

To simplify notations the symbol $\circ$ for the composition of (super)-operators is not used in this subsection. 

\begin{thm}[Abstract homogenization of semigroups]
\label{thm:homogenization}
Let $V$ be a finite-dimensional Hilbert space. Let $\gamma>0$ and $\Lc^{(0)}, \Lc^{(1)}, \Lc^{(2)}: V \rightarrow V$ be linear operators and define:
$$ \Lc_\gamma := \Lc^{(0)} + \gamma \Lc^{(1)} + \gamma^2 \Lc^{(2)}.$$
Consider the following assumptions:
\begin{enumerate}[i)]

\item {\rm{(Spectral property)}} The dominant operator $\Lc^{(2)}$ has no purely imaginary eigenvalues.



\item {\rm{(Contractivity property)}} We have at least one of the two conditions which holds for some norm $\Vert \cdot \Vert_\infty$ on $V$\footnote{Here also we denote with the same notation the corresponding operator norm on ${\rm{End}} (V)$.}:
\begin{equation}
\label{eq:provisoire}
\begin{split}
   \forall \gamma>0,\forall s\ge 0, \quad  \left\| e^{s \Lc_\gamma} \right\|_{\infty} \le 1 \quad {\rm{or}}
   \quad  \forall \gamma>0, \forall s\ge 0,  \quad \left\| e^{s \Lc_\gamma^*} \right\|_{\infty} \le 1.
\end{split}   
\end{equation}

\item {\rm{(Centering property)}} We have that $\Pc \Lc^{(1)} \Pc = 0$. Here $\mathcal P$ is the projector associated to $  \Lc^{(2)}$ defined by
\begin{align}
	\label{eq:def_Pi}
	\Pc := \lim_{t \rightarrow \infty} e^{t \Lc^{(2)}}.\\
\end{align}
It is well-defined thanks to i) and ii) as proved below. 
\end{enumerate}
\medskip
Under such assumptions, we have for any $t>0$:
\begin{equation*}
\lim_{\gamma \rightarrow \infty} e^{t \Lc_\gamma}=\Pc e^{t \Lc_\infty} \Pc
\end{equation*}
with
\begin{align*}
\Lc_\infty
& = \Pc \Lc^{(0)} \Pc 
  - \Pc\Lc^{(1)}
    \left( \Lc^{(2)} \right)^{-1}
    \Lc^{(1)} \Pc.
\end{align*}
Here $\left( \Lc^{(2)} \right)^{-1}$ is the pseudo-inverse of $\Lc^{(2)}$. 
\end{thm}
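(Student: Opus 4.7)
My strategy is to work with the Laplace transform of the semigroup, namely the resolvent $R_\gamma(s):=(s-\Lc_\gamma)^{-1}$, and to perform a block decomposition of $V$ adapted to the projector $\Pc$.

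\textbf{Step 1 (well-definedness of $\Pc$ and of the pseudo-inverse).} I would first check that $\Pc:=\lim_{t\to\infty}e^{t\Lc^{(2)}}$ exists. The key observation is that $(t/\gamma^2)\Lc_\gamma\to t\Lc^{(2)}$ as $\gamma\to\infty$, hence by continuity of the matrix exponential and the contractivity assumption (ii) one obtains $\|e^{t\Lc^{(2)}}\|_\infty\le 1$ for every $t\ge 0$ (the dual version of (ii) is handled symmetrically by passing to the adjoint). This contractivity forces the spectrum of $\Lc^{(2)}$ into the closed left half-plane and makes any purely imaginary eigenvalue (in particular $0$) semisimple. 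Combined with the spectral assumption (i) excluding non-zero purely imaginary eigenvalues, the spectrum splits as $\{0\}\cup\Sigma^-$ with $\Sigma^-\subset\{\Re z<0\}$. Consequently $e^{t\Lc^{(2)}}$ converges as $t\to\infty$ to the spectral projector $\Pc$ onto $\ker\Lc^{(2)}$ along $\mathrm{im}\,\Lc^{(2)}$. In particular $\Pc\,\Lc^{(2)}=\Lc^{(2)}\,\Pc=0$, and the pseudo-inverse $(\Lc^{(2)})^{-1}$, defined as the inverse of $\Lc^{(2)}|_{\ker\Pc}$ extended by zero on $\mathrm{im}\,\Pc$, satisfies $\Lc^{(2)}(\Lc^{(2)})^{-1}=(\Lc^{(2)})^{-1}\Lc^{(2)}=\Id-\Pc$.

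\textbf{Step 2 (Schur complement on the resolvent).} For $\Re s>0$, contractivity ensures that $R_\gamma(s)=\int_0^\infty e^{-st}e^{t\Lc_\gamma}\,dt$ exists and is bounded by $(\Re s)^{-1}$, uniformly in $\gamma$. I would decompose $V=\mathrm{im}\,\Pc\oplus\ker\Pc$ and write $s-\Lc_\gamma$ as a $2\times 2$ block matrix. Since $\Pc\,\Lc^{(2)}=\Lc^{(2)}\,\Pc=0$, $\Lc^{(2)}$ contributes only through the lower-right block $C:=\Lc^{(2)}|_{\ker\Pc}$, which is invertible; and the centering hypothesis $\Pc\,\Lc^{(1)}\,\Pc=0$ kills the upper-left block of $\Lc^{(1)}$. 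The lower-right block of $s-\Lc_\gamma$ then reads $s-(\text{bounded})-\gamma^2 C$, with inverse $-\gamma^{-2}C^{-1}+O(\gamma^{-3})$. The Schur complement governing the upper-left block of $R_\gamma(s)$ then converges, as $\gamma\to\infty$, to $s-\Pc\,\Lc^{(0)}\,\Pc+\Pc\,\Lc^{(1)}(\Lc^{(2)})^{-1}\Lc^{(1)}\,\Pc=s-\Lc_\infty$ restricted to $\mathrm{im}\,\Pc$, exactly the correct limit. A direct bookkeeping of the three remaining blocks via the block inversion formula shows that they all vanish at rate $O(\gamma^{-1})$ or $O(\gamma^{-2})$. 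Hence $R_\gamma(s)\to\Pc(s-\Lc_\infty)^{-1}\Pc=:R_\infty(s)$ for every $s>0$.

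\textbf{Step 3 (from resolvent to semigroup, and main obstacle).} To promote the pointwise resolvent convergence to pointwise (in $t>0$) convergence of the semigroup, I would exploit the spectral gap made visible by the Schur analysis: for $\gamma$ large, $\Spec(\Lc_\gamma)$ splits into a ``slow'' part contained in a fixed compact $\Omega_s\subset\C$ that converges to $\Spec(\Lc_\infty|_{\mathrm{im}\,\Pc})$, and a ``fast'' part contained in $\{\Re z\le -c\gamma^2\}$ for some $c>0$. Picking a contour $\Gamma$ surrounding $\Omega_s$ and bounded away from $\Spec(\Lc_\gamma)$ uniformly for $\gamma$ large, the Dunford functional calculus yields $e^{t\Lc_\gamma}P^{\mathrm{slow}}_\gamma=\frac{1}{2\pi i}\oint_\Gamma e^{t\lambda}R_\gamma(\lambda)\,d\lambda$, with $P^{\mathrm{slow}}_\gamma=\frac{1}{2\pi i}\oint_\Gamma R_\gamma(\lambda)\,d\lambda$ the spectral projector onto the slow invariant subspace. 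By Vitali--Montel applied to the analytic family $R_\gamma$ (uniformly bounded on compacts thanks to contractivity), the convergence $R_\gamma\to R_\infty$ is uniform on $\Gamma$, so the contour integral tends to $\Pc e^{t\Lc_\infty}\Pc$; meanwhile $\|e^{t\Lc_\gamma}(\Id-P^{\mathrm{slow}}_\gamma)\|\le Ce^{-c\gamma^2 t}\to 0$ for any fixed $t>0$, which closes the argument. The main technical obstacle, in my view, lies in Step 1 -- extracting the semisimplicity of the zero eigenvalue of $\Lc^{(2)}$ from the mild contractivity hypothesis, so that $\Pc$ is genuinely a projector -- together with the careful block bookkeeping in Step 2, where some entries of $R_\gamma(s)$ decay only at rate $O(\gamma^{-1})$ and must be verified to vanish before invoking the limit.
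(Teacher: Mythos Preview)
Your approach is correct and takes a genuinely different route from the paper's. You work on the Laplace side: block-invert $s-\Lc_\gamma$ via the Schur complement with respect to the splitting $V=\mathrm{im}\,\Pc\oplus\Ker\Pc$, read off $\Lc_\infty$ as the limit of the Schur complement, and return to the time domain by a two-contour Dunford integral separating slow and fast spectral pieces. The paper stays entirely in the time domain: it decomposes $e^{t\Lc_\gamma}$ into the four blocks $\Gc^\gamma,\Hc^\gamma,\Ic^\gamma,\Jc^\gamma$ according to $\Pc$ and $\Pc_\perp$ on each side, kills the three off-blocks by a Duhamel estimate, and then derives a Nakajima--Zwanzig-type closed integro-differential equation for $\Gc^\gamma_t=\Pc e^{t\Lc_\gamma}\Pc$ whose effective generator ${\mathcal M}_\gamma$ is shown to converge to $\Lc_\infty$. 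The two methods are dual---your Schur complement is precisely the Laplace transform of their closed equation for $\Gc^\gamma$---and both rest on the same Step~1 spectral facts, which the paper establishes by the same contractivity argument you give. Your path makes the identification of $\Lc_\infty$ more transparent; the paper's path yields explicit time-domain error bounds without the contour bookkeeping. One caveat on your Step~3: the uniform bound on $R_\gamma$ along $\Gamma$ cannot come from contractivity alone, since $\Gamma$ must enter the left half-plane to encircle $\Spec(\Lc_\infty|_{\mathrm{im}\,\Pc})$; the bound follows instead directly from your own block-inversion estimates, which are uniform on compacts away from the slow spectrum. Likewise, the uniform constant $C$ in $\|e^{t\Lc_\gamma}(\Id-P^{\mathrm{slow}}_\gamma)\|\le Ce^{-c\gamma^2 t}$ requires a rescaled contour $\mu=\lambda/\gamma^2$ encircling $\Spec(\Lc^{(2)})\setminus\{0\}$, which is exactly what the paper carries out in its Step~5.
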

\begin{rmk}
\
\begin{enumerate}[1.]
\item The pseudo-inverse can be defined as
\begin{align}
\label{def:pseudo-inverse}
  \left( \Lc^{(2)} \right)^{-1} := -\int_0^\infty \left( e^{s \Lc^{(2)}} - \mathcal P \right) ds \ .
\end{align}
It is taken as zero on $\Ker \Lc^{(2)} = \Im \Pc$ and the inverse of $\Lc^{(2)}$ upon restricting to $\Ker \Pc$. The existence of this integral results from properties \eqref{it:spec_gap} and \eqref{it:ass_erg} established in Step $0$ of the proof.
\item As explained in the Step $1$ of the proof the projector $\Pc$ defined by Eq.~\eqref{eq:def_Pi} is the projector onto $\Ker \Lc^{(2)}$ parallel to $\Im \Lc^{(2)}=\left( \Ker (\Lc^{(2)})^* \right)^\perp$. An alternative formula is the following. The conditions i) and ii) imply we can find a basis $(r_i)_{i=1}^p$ of  $\Ker \Lc^{(2)}$ and a basis $(\ell_i)_{i=1}^p$ of $ \Ker (\Lc^{(2)})^*$ which are biorthogonal, i.e. $\langle \ell_i, r_j\rangle=\delta_{i,j}$ such that $\Pc (x)=\sum_{i=1}^k \langle \ell_i , x \rangle  r_i$ for any $x \in V$. 
\item While the theorem is formulated as a strong perturbation one, using the generalization mentioned in Subsection \ref{sec:further remarks} where $\Lc^{(0)} = \Lc^{(0)}_\gamma$ is allowed to depend on $\gamma$, it can be reformulated as a weak perturbation one in the appropriate time scale. If $\varepsilon\mapsto \Lc(\varepsilon)$ is a two times differentiable function in $0$ taking value in the set of endomorphisms on $V$. 

Setting $\Lc^{(2)}=\Lc (0)$, $\Lc^{(1)}=\Lc^{\prime} (0)$ and $\Lc^{(0)}=\frac{1}{2} \Lc^{\prime\prime} (0)$ and assuming the $\Lc^{(i)}$'s satisfy the four assumptions of the theorem, using $\gamma = \frac{1}{\varepsilon}$, one easily gets when $t>0$
$$
   \lim_{\varepsilon\to 0}e^{t\varepsilon^{-2}\Lc (\varepsilon)}=\Pc e^{t\Lc_\infty}\Pc.
$$

With the same framework, setting $\Lc^{(2)}=\Lc (0)$, $\Lc^{(1)}=0$ and $\Lc^{(0)}= \Lc^{\prime} (0)$, using $\gamma = \frac{1}{\sqrt{\varepsilon}}$, one gets for $t>0$
$$ \lim_{\varepsilon\to 0}e^{t\varepsilon^{-1}\Lc (\varepsilon)}=\Pc e^{t\Lc'(0)}\Pc $$

Furthermore, by continuity,
$$ \lim_{\varepsilon\to 0}e^{t\Lc(\varepsilon)}=e^{t\Lc(0)} \ .$$

\item 
Such limiting generators often appear in the context of homogenization theory \cite{pavliotis2008multiscale,Gardiner09}, and can take various equivalent formulations. On the physics side, the associated dimension reduction is also very common. Restricting to the quantum context, let us mention \cite{Cohen-Tanoudji92, GardinerZoller04, BoutenSilberfarb2008, BoutenHandelSilberfarb2008, Walls-Milburn08, BauerBernardJin17, BoutenGough2018}.
\end{enumerate}
\end{rmk}

\begin{proof}
The Hilbert space structure on $V$ induces a norm which is denoted $\Vert\cdot \Vert$. We also write $\Vert\cdot \Vert$ for the operator norm on the algebra of endomorphisms on $V$.

\bigskip

{\bf {Step 0: Spectral consequences of the spectral and contractivity properties i) and ii)}} 

We will prove in this step that the following properties hold:
\begin{itemize}
\item Real Spectral Gap \eqref{it:spec_gap} property:
\begin{equation} 
 \Re \Spec \Lc^{(2)} \subset (-\infty, 0] \; \text{and} \; \lambda=0 \; \text{is the only eigenvalue of}\;  \Lc^{(2)} \; \text{with}\;  \Re \lambda = 0.
\label{it:spec_gap}
\tag{RSG}
\end{equation}
\item {{Semi-simplicity}} \eqref{it:ass_erg} property\footnote{This property is equivalent to one of the following assertions:
\begin{itemize}
\item the restriction to $\Ker\left[ \left( \Lc^{(2)} \right)^m \right]$ is diagonalizable, with $m \in \N$ being the algebraic multiplicity. 
\item the algebraic multiplicity $m$ of $\lambda=0$ matches the geometric multiplicity $\dim \Ker \Lc^{(2)}$.
\end{itemize}}:
\begin{equation}
\label{it:ass_erg} \tag{SS}
\text{The eigenspace of  $\Lc^{(2)}$ associated to $0$ has only one dimensional Jordan blocks.} 
\end{equation}  
\end{itemize}

Let us start with \eqref{it:spec_gap}. The spectra of $\Lc^{(2)}$ and ${\Lc^{(2)}}^*$ are complex conjugate. Hence, without loss of generality, we can assume that ii) holds with $\Lc^{(2)}$, i.e. the latter generates a contracting semigroup. Suppose $\Lc^{(2)}$ has an eigenvalue $\lambda$ with strictly positive real part and denote $v_\lambda$ a corresponding eigenvector. Then $e^{t\Lc^{(2)}}v_\lambda=e^{t\lambda}v_\lambda$. It follows that $\|e^{t\Lc^{(2)}}v_\lambda\|_\infty=e^{t\Re\lambda}\|v_\lambda\|_\infty>\|v_\lambda\|_\infty$ and therefore $e^{t\Lc^{(2)}}$ is not a contraction with respect to $\Vert \cdot \Vert_{\infty}$. By contradiction we have $\Re \Spec \Lc^{(2)}\subset (-\infty,0]$. Since we assumed furthermore in i)  that there are no purely imaginary eigenvalues, \eqref{it:spec_gap} follows.

We now prove \eqref{it:ass_erg}. Let us first observe that by replacing in Eq. \eqref{eq:provisoire} $s$ by $s/{\gamma}^2$ and sending then $\gamma \to \infty$ we get that $\sup_{s\ge 0} \Vert e^{s\Lc^{(2)}}\Vert_{\infty}\le 1$ or $\sup_{s\ge 0} \Vert e^{s{\Lc^{(2)}}^*} \Vert_{\infty}\le 1$. Assume now by contradiction that \eqref{it:ass_erg} does not hold. Then there exists a vector $v_0$ such that $e^{t\Lc^{(2)}}v_0=\operatorname{Poly}(t)$ where $\operatorname{Poly}$ is a non zero vector with polynomial coefficients (in any given basis). Hence, its norm (any one) grows as $t$ goes to infinity. It follows that $e^{t\Lc^{(2)}}$ is not a contraction. A similar argument shows that $e^{t{\Lc^{(2)}}^*}$ cannot be a contraction either. Hence the existence of a non trivial Jordan block associated to the eigenvalue $0$ contradicts the contractivity assumption ii)  made on $\Lc^{(2)}$ or ${\Lc^{(2)}}^*$.
 
\bigskip

Before we start the proof, which will be divided in six steps, let us observe that, by the equivalence of the norms and the isometry property with respect to the norm $\Vert\cdot \Vert$,  we obtain that there exists $C>0$ such that
\begin{equation}
\label{eq:provisoire2}
 \sup_{\gamma>0 } \sup_{s\ge 0} \left\| e^{s \Lc_\gamma} \right\|  \le C, \quad  \sup_{\gamma>0 } \sup_{s\ge 0} \left\| e^{s \Lc_\gamma^*} \right\|  \le C.
\end{equation}

\bigskip 

{\bf Step 1: Existence and characterization of the projector $\Pc$ defined in Eq.~\eqref{eq:def_Pi}}

We will prove here the existence of $\Pc$ by using a Jordan normal form decomposition of $\Lc^{(2)}$.

Indeed considering the minimal polynomial $\pi(X)=\prod_{j=1}^\ell (X-\lambda_j)^{\nu_j}$ of $\Lc^{(2)}$, we have the decomposition into $\Lc^{(2)}$-invariant subspaces:  $V =\oplus_{j=1}^\ell {\Ker}\, (\Lc^{(2)} -\lambda_j)^{\nu_j}$. Here the $\lambda_j$ are the distinct eigenvalues of  $\Lc^{(2)}$ with $\lambda_1=0$ and for $j\ge 2$, $\Re \lambda_j <0$ (by \eqref{it:spec_gap}). Since $\nu_1=1$, by \eqref{it:ass_erg}, it follows immediately that ${\mathcal P}$ is well defined. Moreover, we have trivially that for any $v\in \Ker  \Lc^{(2)}$, $\Pc(v) =v \in \Im \Pc$ on the one hand and $\Im \Lc^{(2)} \subset \Ker \Pc$ on the other hand. The fact that only Jordan blocks of size $1$ associated to the zero eigenvalue are allowed implies that $\Im \Lc^{(2)} \cap \Ker \Lc^{(2)} =\{ 0\}$, and by the rank formula, $V=\Im \Lc^{(2)} \oplus \Ker \Lc^{(2)}=\Ker \mathcal P\oplus \Im \mathcal P$. Hence $\Pc$ is the projector on $\Ker \Lc^{(2)}$ parallel to $\Im \Lc^{(2)}$. In the rest of the proof we denote $\mathcal P_\perp=\id-\mathcal P$.

\bigskip

{\bf Step 2: Block decomposition of the semigroup}

Let us study the evolution of the exponential map $t \mapsto e^{t \Lc_\gamma}$. It is the unique solution to the Cauchy problem $\partial_t {{\mathcal K}}_t^\gamma = \Lc_\gamma {\mathcal K}_t^\gamma$ with initial condition ${\mathcal K}_0^\gamma=\Id$.

We remark that $\Lc_\gamma \stackrel{\gamma \rightarrow \infty}{\sim} \gamma^2 \, \Lc^{(2)}$. Hence it is natural to decompose ${\mathcal K}^\gamma$ according to the direct sum ${\rm{Im}} (\Pc) \oplus {\rm{Im}} (\Pc_\perp)$. This is given by
\begin{equation*}
e^{t \Lc_\gamma} = {\mathcal G}^\gamma_t + {\mathcal H}^\gamma_t + {\mathcal I}^\gamma_t + {\mathcal J}^\gamma_t,
\end{equation*}
with 
\begin{equation}\label{eq:def_blocs}
\begin{split}
&      {\mathcal G}^\gamma_t := \Pc e^{t \Lc_\gamma} \Pc,
\quad {\mathcal H}^\gamma_t := \Pc_\perp e^{t \Lc_\gamma} \Pc,\\
&{\mathcal I}^\gamma_t := \Pc e^{t \Lc_\gamma} \Pc_\perp,
\quad {\mathcal J}^\gamma_t := \Pc_\perp e^{t \Lc_\gamma} \Pc_\perp \ .
\end{split}
\end{equation}

\bigskip

{\bf Step 3: $\| {\mathcal H}_t^\gamma\|$, $\| {\mathcal I}_t^\gamma\|$ and $\| {\mathcal J}_t^\gamma\|$ asymptotically vanish as $\gamma\to \infty$.}

For any $t>0$, $\| {\mathcal H}_t^\gamma\|$, $\| {\mathcal I}_t^\gamma\|$ and $\| {\mathcal J}_t^\gamma\|$ will go to $0$ as $\gamma\to \infty$ as a consequence of the fact that $\Pc$ and $\Pc_\perp$ are bounded and the fact that there exists $C>0$ and $g>0$ such that for any $\gamma>0$ and $t\in \R_+$,
\begin{align}
\label{eq:toto}
        \|\Pc_\perp e^{t\Lc_\gamma}\|
 \leq & \ C \left( e^{-tg\gamma^2} + \tfrac{1}{\gamma} \right) \ ,
\end{align}
and
\begin{align}
\label{eq:toto2}
       \| e^{t\Lc_\gamma}\Pc_\perp\|
\leq & \ C \left( e^{-tg\gamma^2} +  \tfrac{1}{\gamma} \right) \ .
\end{align}
First of all, the latter Eq.~\eqref{eq:toto2} is a consequence of the former Eq.~\eqref{eq:toto} by considering the adjoint. Indeed, because taking the dual is an isometric operation, we have $\| e^{t\Lc_\gamma}\Pc_\perp\|=\| \Pc_\perp^* e^{t\Lc_\gamma^*}\|$. As such, we only need to see that the pair $( \Pc_\perp^*, \Lc_\gamma^*)$ falls into the same setting as $(\Pc_\perp, \Lc_\gamma)$. This is obvious because $\left( \Lc^{(2)} \right)^*$ satisfies the same assumptions as $\Lc^{(2)}$, $\Lc_\gamma^*$ has a similar decomposition to $\Lc_\gamma$ and by the definition of $\Pc$ in Eq.~\eqref{eq:def_Pi}, we have the expression:
$$ \Pc^* = \lim_{t \rightarrow \infty} e^{t\left( \Lc^{(2)} \right)^*} \ .
$$

Now, we can focus on proving Eq.~\eqref{eq:toto}. As we will see the centering assumption is not used in this proof. We start by explaining why there exists $C>0$ and $g>0$ such that:
\begin{align}
\label{eq:ineq_pi_perp_Lnd}
\forall t \in \R_+, \quad 
\|\Pc_\perp e^{t\Lc^{(2)}}\|
\leq & \ C e^{-tg} \ .
\end{align}
Thanks to the  semi-simplicity property \eqref{it:ass_erg}, the Jordan-Chevalley decomposition of $\Lc^{(2)}$ gives that
$$ \forall t \in \R_+, \quad 
\|\Pc_\perp e^{t\Lc^{(2)}}\|
\leq \left| P(t) \right|
     e^{t\Lambda} $$
where $P$ is a polynomial and $\Lambda = \max\{ \Re \lambda \ | \ \lambda \neq 0, \ \lambda \in \Spec \Lc^{(2)} \}$. On the other hand the real spectral gap property \eqref{it:spec_gap} implies $\Lambda < 0$. The decreasing exponential absorbs the polynomial and therefore Eq.~\eqref{eq:ineq_pi_perp_Lnd} holds with an appropriate choice of constants. Then, by applying Duhamel's principle to the ordinary differential equation (ODE)
$$ \partial_t e^{t \Lc_\gamma}
   =
   \gamma^2 \Lc^{(2)} e^{t \Lc_\gamma}
   +
   \left( \gamma \Lc^{(1)} + \Lc^{(0)} \right) e^{t \Lc_\gamma} \ ,
$$
we have:
$$ e^{t\Lc_\gamma}
 = e^{t \gamma^2 \Lc^{(2)}}
 + \int_0^t ds \ e^{(t-s) \gamma^2 \Lc^{(2)}}
            \left( \gamma \Lc^{(1)} + \Lc^{(0)} \right) e^{s\Lc_\gamma} \ .
$$
Therefore, with possibly different constants $C>0$, the following bounds hold:
\begin{align*}
     \|\Pc_\perp e^{t\Lc_\gamma}\| 
\leq & \ \| \Pc_\perp e^{t \gamma^2 \Lc^{(2)}}\| 
       + \left\|\ \int_0^t ds \ \Pc_\perp e^{(t-s) \gamma^2 \Lc^{(2)}}
            \left( \gamma \Lc^{(1)} + \Lc^{(0)} \right) e^{s\Lc_\gamma}
         \right\| \\
\leq & \ C e^{-tg\gamma^2}
       + C \int_0^t ds \ e^{-(t-s)g\gamma^2}
                    \left( \gamma \|\Lc^{(1)}\|  + \|\Lc^{(0)}\|\right)
					\| e^{s\Lc_\gamma}\| \\          
\leq & \ C e^{-tg\gamma^2}
       + C \left( \gamma^{-1} + \gamma^{-2} \right)
           \sup_{0 \leq s \leq t}\| e^{s\Lc_\gamma}\| 
           \ .
\end{align*}
With the uniform boundedness property given by Eq.~\eqref{eq:provisoire2} we are done.

\bigskip

{\bf Step 4: The limit of ${\mathcal G}^\gamma_t$ as $\gamma \to \infty$.}

As a very general fact, it turns out that the pair $({\mathcal G}^\gamma, {\mathcal H}^\gamma)$ follows a closed evolution equation. The same holds for $({\mathcal I}^\gamma, {\mathcal J}^\gamma)$. Since we are interested only in the former we do not discuss the latter. By using $\Pc^2=\Pc$ and $(\Pc_\perp)^2= \Pc_\perp$, we have that  
\begin{equation}
\label{eq:GH}
\begin{cases}
\partial_t {\mathcal G}_t^\gamma =(\Pc \Lc_\gamma \Pc) {{\mathcal G}}_t^\gamma + (\Pc \Lc_\gamma \Pc_\perp) {\mathcal H}_t^\gamma, \quad {\mathcal G}_0^\gamma = \Pc, \\
\partial_t {\mathcal H}_t^\gamma =(\Pc_\perp \Lc_\gamma \Pc) {{\mathcal G}}_t^\gamma + (\Pc_\perp \Lc_\gamma \Pc_\perp) {\mathcal H}_t^\gamma, \quad {\mathcal H}_0^\gamma =  {\bf 0}.
 \end{cases}
\end{equation}
Solving the second equation in terms of ${{\mathcal G}}^\gamma$ and using again that $\Pc_\perp^2=\Pc_\perp$ and $\Pc_\perp  {\mathcal H}^\gamma={\mathcal H}^\gamma$, we get that
\begin{align*}
{\mathcal H}_t^\gamma &= \int_0^t e^{(t-s) \Lc_\gamma^\perp} \, \Pc_\perp \Lc_\gamma \Pc \, {{\mathcal G}}_s^\gamma \, ds=\int_0^t \left[ \Pc_{\perp}  e^{(t-s) \Lc_\gamma^\perp} \, \Pc_\perp \right]\; \left[ \Pc_\perp \Lc_\gamma \Pc\right]  \, {{\mathcal G}}_s^\gamma \, ds
\end{align*}
where 
$$\Lc_\gamma^\perp := \Pc_\perp\Lc_\gamma\Pc_\perp.$$
We deduce then the following closed equation for ${\mathcal G}^\gamma$:
\begin{align*}
  \partial_t {\mathcal G}_t^\gamma 
= (\Pc \Lc_\gamma \Pc) {{\mathcal G}}_t^\gamma + (\Pc \Lc_\gamma \Pc_\perp)\int_0^t \Big[ \Pc_{\perp}  e^{(t-s) \Lc_\gamma^\perp} \, \Pc_\perp \Big]\; \left[ \Pc_\perp \Lc_\gamma \Pc\right]  \, {{\mathcal G}}_s^\gamma \, ds\ .
\end{align*}
This differential formulation is turned into an integral form, and via a Fubini argument, we get
\begin{equation}
\label{eq:closedG}
{\mathcal G}_t^\gamma = \Pc + \int_0^t ds \, {{\mathcal M}}_\gamma {{\mathcal G}}_s^\gamma\; +\; {{\mathcal R}}_t^\gamma
\end{equation}
with the operator ${\mathcal M}_\gamma \in \End\left( V  \right)$ defined by
\begin{equation*}
\begin{split}
{{\mathcal M}}_\gamma &:= (\Pc \Lc_\gamma \Pc)  - (\Pc \Lc_\gamma \Pc_\perp) \,  (\Lc_\gamma^\perp)^{-1} \, (\Pc_\perp \Lc_\gamma \Pc) \\
\end{split}
\end{equation*}
and the remainder term given by
\begin{equation}
\label{eq:remainderRperp}
\Rc_t^\gamma =(\Pc\Lc_\gamma \Pc_\perp) (\Lc_\gamma^\perp)^{-1} \int_0^t \left[ \Pc_{\perp}  e^{(t-s) \Lc_\gamma^\perp} \, \Pc_\perp \right]\; \left[ \Pc_\perp \Lc_\gamma \Pc\right]  \, {{\mathcal G}}_s^\gamma \, ds.
\end{equation}

We prove later that there exists $\gamma_0>0$ and $C_0>0$ such that for $\gamma > \gamma_0$,  
\begin{equation}
\label{eq:estimate-n}
\sup_{t >0} \| \Rc_t^\gamma \| \le \tfrac{C_0}{\gamma^{2}}, \quad \| {{\mathcal M}}_\gamma -\Lc_{\infty}\| \le \tfrac{C_0}{\gamma}.
\end{equation}
Let us now finish the proof modulo this claim. In order to do so, we rewrite Eq.~\eqref{eq:closedG} as
\begin{equation}
\label{eq:GGG}
{\mathcal G}_t^\gamma = \Pc + \int_0^t  \Lc_\infty {{\mathcal G}}_s^\gamma\, ds \; +\; {\tilde{{\mathcal R}}_t^\gamma}
\end{equation}
with
\begin{equation}
\label{eq:RRR}
{\tilde{{\mathcal R}}}_t^\gamma = \int_0^t [{{\mathcal M}}_\gamma -\Lc_\infty] {\mathcal G}_s^\gamma \, ds \; +\; {{\mathcal R}}_t^\gamma.
\end{equation}
Then, defining ${\mathcal Y}^\gamma_t:={\mathcal G}_t^\gamma - e^{ t \Lc_\infty} \Pc$, since ${\mathcal G}_0^\gamma=\Pc$ (see Eq.~\eqref{eq:GH}), we have 
 \begin{equation*}
\partial_t {\mathcal Y}^\gamma= \Lc_{\infty} {\mathcal Y}^\gamma +\partial_t {\tilde {\mathcal R}^{\gamma}}, \quad {\mathcal Y}_0^\gamma=0.
\end{equation*}
Solving this ODE and performing an integration by parts, we get that
 \begin{equation*}
 {\mathcal G}_t^\gamma - e^{ t \Lc_\infty} \Pc ={\mathcal Y}_t^\gamma=  \Lc_{\infty} \int_0^t e^{(t-s)\Lc_\infty} {\tilde{{\mathcal R}}}_s^\gamma \, ds \; +\; {\tilde{\mathcal R}}_t^\gamma.
\end{equation*}
 Hence:
\begin{equation*}
    \| {\mathcal G}_t^\gamma - e^{ t \Lc_\infty} \Pc \|
\le e^{t \| \Lc_\infty\|} \sup_{0\le s \le t} \| {\tilde {\mathcal R}}_s^\gamma \| \ .
\end{equation*}
Since by the uniform boundedness property \eqref{eq:provisoire2}, $\sup_{s \ge 0} \| e^{s \Lc_\gamma} \| \leq C$, we have with possibly different constants $C>0$, that $\| \Pc \| \le C$ by Eq.~\eqref{eq:def_Pi}. Consequently, by the definition of ${\mathcal G}^\gamma$ in \eqref{eq:def_blocs}, that 
\begin{equation}
\label{eq:boundGGG}
\sup_{s \ge 0} \| {\mathcal G}_s^\gamma \| \le C.
\end{equation}
Hence by Eq.~\eqref{eq:RRR} and Eq.~\eqref{eq:estimate-n} we deduce that for $\gamma >\gamma_0$, 
\begin{equation*}
\sup_{0\le s \le t} \| {\tilde {\mathcal R}}_s^\gamma \| \le C_0 \left( \tfrac{C t} {\gamma} + \tfrac{1}{\gamma^{2}} \right).
\end{equation*}
We conclude that $\|{\mathcal G}_t^\gamma - e^{ t \Lc_\infty} \Pc\| \to 0$ as $\gamma \to \infty$ and this concludes the proof of the theorem. 

\bigskip

{\bf Step 5: Some spectral perturbation theory.} Before proving the remaining claim \eqref{eq:estimate-n}, let us show that there exists $K>0$  and $g>0$ such that for any $\gamma$ large enough,
\begin{equation}
\label{eq:bound_exp_Lperp-1}
\|\Pc_\perp e^{t\Lc_{\gamma}^\perp} \Pc_{\perp}\| \leq K e^{-t\gamma^2 g} \ .
\end{equation}

For convenience, for any operator $T \in \End(V)$, we define $\widetilde{T} \in \End( \Pc_\perp V )$ as the restriction of $\Pc_\perp T \Pc_\perp$ to $\Pc_\perp V$. Observe first that
\begin{equation*}
\frac{1}{\gamma^2} \widetilde{\Lc_\gamma}
 = 
 \widetilde{ \Lc^{(2)} }
 +
 \frac{1}{\gamma} \widetilde{\Lc^{(1)}}
 +
 \frac{1}{\gamma^2} \widetilde{ \Lc^{(0)} } \ .
\end{equation*}
Thanks to the real spectral gap property \eqref{it:spec_gap}, the endomorphism $\widetilde{\Lc^{(2)}}$ has only eigenvalues with strictly negative real parts. Standard perturbation theory in finite dimension \cite[$\mathsection$ 2.5.1]{kato2013perturbation} states that eigenvalues change continuously as $\frac{1}{\gamma} \rightarrow 0$. As such, that for $\gamma$ large enough, there exists some $g>0$ such that
\begin{equation}
\label{eq:ineq_L_perp-1}
\max\left\{
  \Re(\lambda) \ | \ 
  \lambda \in \Spec\left( \frac{1}{\gamma^2} \widetilde{\Lc_\gamma} \right) \ 
  \right\}
  \leq -2 g
\end{equation}

In order to control $e^{t \Lc_\gamma^\perp}$, one cannot invoke the Jordan-Chevalley decomposition as eigenprojectors can behave poorly -- see \cite[$\mathsection$ 2.5.3]{kato2013perturbation} for example. Instead, we invoke the holomorphic functional calculus:
$$ f(T) = \frac{1}{2\pi i} \int_{\Cc} f(z) \left( z-T \right)^{-1} dz \ ,$$
which holds for any operator $T \in \End(V)$, any holomorphic $f: \C \rightarrow \C$ and any contour $\Cc$ encircling $\Spec T$. Since $\frac{1}{\gamma^2} \widetilde{\Lc_\gamma}$ has negative eigenvalues bounded away from $i\R$, there exists a constant $C>0$ such that for $\gamma>0$ large enough:
$$ \Spec\left( \frac{1}{\gamma^2} \widetilde{\Lc_\gamma} \right)
   \subset [-C, -g] \times [-C, C] \ .
$$
Using the boundary of this box as contour with parametrization $\left( z(s) \ ; 0 \leq s \leq 1 \right)$, the functional calculus yields for $T = \frac{1}{\gamma^2} \widetilde{\Lc_\gamma}$ and $f(z) = e^{t \gamma^2 z}$:
$$ \exp\left( t \tilde{\Lc_{\gamma}} \right)
   =
   \frac{1}{2\pi i} \int_{\Cc} e^{t \gamma^2 z} \left( z-\frac{1}{\gamma^2} \widetilde{\Lc_\gamma} \right)^{-1} dz \ .   
$$
Therefore:
\begin{align*}
       \|\Pc_\perp e^{t\Lc_{\gamma}^\perp} \Pc_{\perp}\|
=    & \ \| \exp\left( t \tilde{\Lc_{\gamma}} \right) \Pc_{\perp} \| \\
\leq & \ \| \exp\left( t \tilde{\Lc_{\gamma}} \right) \| \ \| \Pc_{\perp} \| \\
=    & \ \left\| \frac{1}{2\pi i} \int_{\Cc} e^{t \gamma^2 z} \left( z-\frac{1}{\gamma^2} \widetilde{\Lc_\gamma} \right)^{-1} dz \right\| \ \| \Pc_{\perp} \| \\
\leq & \ \| \Pc_{\perp} \| \ \frac{e^{-t \gamma^2 g}}{2\pi } \int_{[0,1]} \left\| \left( z(s) - \frac{1}{\gamma^2} \widetilde{\Lc_\gamma} \right)^{-1} \right\| \ |z'(s)| \ ds \ .
\end{align*}
 
 As long as the contour stays away from the spectrum, the resolvant $\left( z-\frac{1}{\gamma^2} \widetilde{\Lc_\gamma} \right)^{-1}$ is continuous in $z$ and in the operator $\frac{1}{\gamma^2} \widetilde{\Lc_\gamma} \stackrel{\gamma \rightarrow \infty}{\longrightarrow} \widetilde{\Lc^{(2)}}$. Therefore, the integrand is bounded by some constant $L$. We have thus proved Eq.~\eqref{eq:bound_exp_Lperp-1} with $K = \frac{\| \Pc_{\perp} \| L}{2 \pi}$.

\bigskip

{\bf Step 6: Proof of claim \eqref{eq:estimate-n}.}

Thanks again to the continuity of eigenvalues, for $\gamma$ large enough $\Ker \Lc_\gamma^\perp= \Pc V$ and $\Lc_\gamma^\perp$ restricted to $\Pc_\perp V$ is invertible. Using the definition of pseudo-inverse in Eq.~\eqref{def:pseudo-inverse}, we have that for $\gamma$ large enough, 
$$ \Pc_\perp (\Lc_\gamma^\perp)^{-1}\Pc_\perp
 = -\int_0^\infty \Pc_{\perp}  e^{t\Lc_\gamma^\perp}\Pc_\perp dt
$$
and thus, by Eq.~\eqref{eq:bound_exp_Lperp-1},
\begin{align*}
\| \Pc_\perp  (\Lc_\gamma^\perp)^{-1}\Pc_\perp\|\leq \frac{C}{\gamma^2} \ .
\end{align*}
By using the definition \eqref{eq:def_Pi} for $\mathcal P$ we have $\Pc \Lc^{(2)}=0=\Lc^{(2)} \Pc$ so that  
$$\|\Pc \Lc_\gamma\Pc_\perp\|=\|\Pc(\Lc^{(0)}+\gamma\Lc^{(1)})\Pc_\perp\| \leq C (\gamma+1) \ ,$$
and similarly 
$$\|\Pc_\perp \Lc_\gamma\Pc\|= \|{\Pc_\perp}(\Lc^{(0)}+\gamma\Lc^{(1)})\Pc\|   \leq C (\gamma+1) \ .$$ 
Hence, starting from Eq.~\eqref{eq:remainderRperp} and using $\sup_{s\ge 0} \|{\mathcal G}_s^\gamma\|\leq C$ (see Eq.~\eqref{eq:boundGGG}) in combination with the three previous inequalities, we have:
$$\|{\mathcal R}_t^\gamma\|\leq C  \int_0^t\|\Pc_\perp e^{(t-s)\Lc_\gamma^\perp} \Pc_\perp\|ds.$$
Therefore, invoking again  Eq.~\eqref{eq:bound_exp_Lperp-1}, we obtain the first part of the claim: for $\gamma$ large enough, we have that
\begin{equation}
\label{eq:bound_I_t-1}
\sup_{t\ge 0} \|{\mathcal R}_t^\gamma\|\leq \frac{C}{\gamma^{2}} \ .
\end{equation}

Now we prove the second part of the claim \eqref{eq:estimate-n}. Let us denote by $(\Lc^{(i)})^{\perp}$ the restriction of $\Pc_\perp \Lc^{(i)} \Pc_\perp$ to  $\Pc_\perp V$. Invoking the fact that 
$$\Pc \Lc^{(2)} =\Lc^{(2)}\Pc=0$$ 
and the centering property iii) we obtain
\begin{equation*}
 \Pc \Lc_\gamma \Pc = \Pc \Lc^{(0)} \Pc \ ,
\end{equation*}
and
\begin{equation*}
\begin{split}
&\Pc \Lc_\gamma \Pc_\perp=\gamma \Pc \Lc^{(1)} \Pc_{\perp} + \Pc \Lc^{(0)} \Pc_{\perp} \ ,\\ 
&\Pc_\perp \Lc_\gamma \Pc =\gamma \Pc_\perp \Lc^{(1)} \Pc + \Pc_{\perp} \Lc^{(0)} \Pc \ .
\end{split}
\end{equation*}
Moreover, since $[\Lc^{(2)}]^\perp $ is invertible, we have
\begin{align*}
    \ (\Lc_\gamma^\perp)^{-1}
= & \ \frac{1}{\gamma^2} \left( [\Lc^{(2)}]^{\perp}\right)^{-1}
    \ \left( {\rm{Id}} + \tfrac{1}{\gamma}   [\Lc^{(1)}]^{\perp} \left( [\Lc^{(2)}]^{\perp}\right)^{-1} + \tfrac{1}{\gamma^2} [\Lc^{(0)}]^{\perp}  \left( [\Lc^{(2)}]^{\perp}\right)^{-1} \right)^{-1} \ .
\end{align*}
Plugging all this in the expression
\begin{equation*}
\begin{split}
{\mathcal M}_\gamma&= (\Pc \Lc_\gamma \Pc)  - (\Pc \Lc_\gamma \Pc_\perp) \,  (\Lc_\gamma^\perp)^{-1} \, (\Pc_\perp \Lc_\gamma \Pc)
\end{split}
\end{equation*}
and since the operators $\Big([\Lc^{(2)}]^\perp\Big)^{-1}$,  $(\Lc^{(1)})^{\perp}$ and  $(\Lc^{(0)})^{\perp}$ are uniformly bounded in $\gamma$, we obtain
\begin{equation*}
\|{{\mathcal M}}_\gamma-\Lc_\infty\|\leq \frac{C}{{\gamma}} \ .
\end{equation*}
Claim \eqref{eq:estimate-n} has been proven.
\end{proof}

\begin{rmk}
It is clear from the proof given above that the conclusion of Theorem~\ref{thm:homogenization} holds if the assumptions settled there are replaced by the real spectral property \eqref{it:spec_gap}, the semi-simplicity property \eqref{it:ass_erg}, the uniform boundedness property \eqref{eq:provisoire2} and the centering property iii). 
\end{rmk}

\subsection{Application to general Lindbladians}

As we shall now see, semigroups generated by Lindbladians satisfy for free the contractivity condition ii) of Theorem~\ref{thm:homogenization}. Consequently we easily obtain the following corollary.  

\begin{corollary}
\label{cor:homo Lindbladian}
Consider a family of Lindbladians $(\Lc_{\gamma})_{\gamma>0}$, $\Lc^{(0)}, \Lc^{(1)}, \Lc^{(2)}$ satisfying the decomposition form given by Eq. \eqref{eq:perturbed_lind} such that  $\Lc^{(2)}$ satisfies the spectral property i) and $\Lc^{(1)}$ the centering condition iii) of Theorem~\ref{thm:homogenization}, then the conclusion of Theorem~\ref{thm:homogenization} holds. 
\end{corollary}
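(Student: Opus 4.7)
The plan is to reduce directly to Theorem~\ref{thm:homogenization}. Its hypotheses~(i) (spectral property of $\Lc^{(2)}$) and~(iii) (centering of $\Lc^{(1)}$) are precisely the hypotheses of the corollary, so the only remaining task is to verify the contractivity condition~(ii) for some norm on $V = M_d(\C)$.

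The key observation is that, for every $\gamma > 0$, the operator $\Lc_\gamma = \Lc^{(0)} + \gamma \Lc^{(1)} + \gamma^2 \Lc^{(2)}$ is itself a Lindbladian. Indeed, a positive scalar multiple $\lambda \Lc^{(i)}$ is in GKSL form~\eqref{def:gksl} after rescaling the Hamiltonian by $\lambda$ and the Kraus operators by $\sqrt{\lambda}$, and the sum of two Lindbladians is again a Lindbladian (concatenate the Kraus lists, sum the Hamiltonians). Consequently $e^{t\Lc_\gamma}$ is a completely positive trace-preserving map for every $t \geq 0$ and every $\gamma > 0$.

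I would then take $\|\cdot\|_\infty$ on $V = M_d(\C)$ to be the $C^*$-algebra operator norm induced by the Hilbert space norm on $\C^d$. The dual Heisenberg-picture semigroup $(e^{t\Lc_\gamma^*})_{t \geq 0}$ consists of unital completely positive maps on $M_d(\C)$, and any such map is a contraction in operator norm, for instance by the Russo--Dye theorem or by a direct application of Kadison's inequality. Hence $\|e^{t \Lc_\gamma^*}\|_\infty \leq 1$ uniformly in $t \geq 0$ and $\gamma > 0$, which is exactly the second alternative of condition~(ii) in Theorem~\ref{thm:homogenization}. Applying that theorem concludes the proof. No substantial obstacle is anticipated here: the content of the corollary is precisely that the Lindblad structure delivers the contractivity hypothesis for free.
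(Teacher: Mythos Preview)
Your proposal is correct and follows essentially the same route as the paper: verify only condition~(ii) of Theorem~\ref{thm:homogenization} by taking $\|\cdot\|_\infty$ to be the $C^*$-operator norm on $M_d(\C)$ and invoking Russo--Dye on the unital completely positive dual semigroup $e^{t\Lc_\gamma^*}$. Your added explanation that $\Lc_\gamma$ is itself a Lindbladian is harmless but unnecessary, since this is already part of the corollary's hypotheses.
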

\begin{proof}
We only have to check the contractivity property ii) to apply Theorem~\ref{thm:homogenization} (with $V=M_d (\mathbb C)$ equipped with the Hilbert--Schmidt inner product). Any Lindbladian $\Lc$ is such that its dual $\Lc^*$ is the generator of a semigroup of identity preserving positive maps. Defining $\|\cdot\|_\infty$ as the operator $2$-norm\footnote{The operator $2$-norm of a matrix $M\in M_d (\C)$ is the operator norm of $M$ considered as an operator on $\C^d$, the latter being equipped with its standard Hilbert structure.} over matrices, the normed space $(M_d(\C),\|\cdot\|_\infty)$ is a $C^*$-algebra. Then choosing $\|\cdot\|_\infty$ to be the induced operator norm on $\End\left( M_d(\C) \right)$, Russo--Dye Theorem implies $\|e^{t\Lc^*}\|_\infty=1$  (see \cite[Corollary 1]{russo1966note}). Hence the contractivity property ii) holds and we have proven the missing assumptions of Theorem~\ref{thm:homogenization} whose conclusion therefore holds.
\end{proof}

\begin{rmk}
	This corollary shows we can apply directly our abstract homogenization result of Theorem~\ref{thm:homogenization} to semigroups generated by Lindbladians. Thus, using the generalizations of Subsection~\ref{sec:further remarks}, we have a rigorous proof of the results of \cite{macieszczak2016towards} concerning class A metastable states. In the language of said article, $\Ker  \Lc^{(2)}$ is a set of class A metastable states and $\Lc_{\infty}$ is the generator of the  intermediary time scale dynamics among them.
\end{rmk}

\subsection{Application to Lindbladians with a decoherence assumption}
\label{section:lindblad_perturbation}

Here we specialize Corollary \ref{cor:homo Lindbladian} to the case where $\Ker \Lc^{(2)}$ is the space of matrices that are diagonal in the pointer basis. Such setting is implied by our working assumptions. In this case the limiting semigroup has the remarkable property to be closely related to the Markov process $\mathbf{x}$ of our Main Theorem~\ref{thm:main}. 

We start this subsection by a lemma on the structure of Lindbladians, which justifies and explains the QND Assumption \ref{ass:(Nd)}:

\begin{lemma}\label{lemma:lindblad_structure}
Let $\Lc$ be a Lindbladian from $M_d(\C)$ to itself with a GKSL decomposition given by \eqref{def:gksl}. Let $(e_i)_{i=1}^d$ be any orthonormal basis and $\Pi$ be the orthogonal projector onto diagonal matrices in this basis. Then:
\begin{enumerate}[1)]
\item If $H$ and every $L_k$ is diagonal in $(e_i)_{i=1}^d$ then 
the super-operator $\Lc: M_d(\C) \rightarrow M_d(\C)$ is diagonalizable with eigenvectors $E_{i,j}:=e_i e_j^*$, $1 \leq i,j \leq d$, and associated eigenvalues:
\begin{align}
\label{eq:L_eigenequation}
\tau_{i,j}  = \ & - \half \sum_{k=1}^\ell \left| (L_k)_{i,i} - (L_k)_{j,j} \right|^2 \\
                 & \quad \quad - \imath \left( H_{i,i} - H_{j,j}
                   + \sum_{k=1}^\ell \Im \left( \overline{(L_k)_{i,i}} (L_k)_{j,j} \right) \right)
                   \ \nonumber.
\end{align}
\item The $L_k$ are diagonal in $(e_i)_{i=1}^d$ iff $\Pi\Lc\Pi=0$ .\label{it:diag_diag_0_equiv_diag}
\item The $L_k$ and $H$ are diagonal in $(e_i)_{i=1}^d$ iff $\Pi\Lc=0=\Lc\Pi$ .\label{it:struct_nd} 
\end{enumerate}
Furthermore, for any Lindblad operator $\Lc$, $\Pi\Lc^*\Pi$\footnote{This operator is defined on the whole matrix space but we identify it with its restriction to diagonal matrices $\Pi\Lc^*\Pi: \Span_\C\{E_{i,i}: i=1,\dotsc,d\}\to \Span_\C\{E_{i,i}: i=1,\dotsc,d\}; X\mapsto \Pi\Lc^*\Pi(X)$.} is a generator of a Markov process on the pointer states $(E_{i,i})_{i=1}^d$.

\end{lemma}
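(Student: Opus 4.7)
The overall strategy is a sequence of direct coordinate computations in the pointer basis, obtained by feeding the elementary matrices $E_{i,j} = e_i e_j^*$ into the GKSL formula \eqref{def:gksl}. I would treat the four assertions in the order stated.

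For part (1), inserting $E_{i,j}$ into \eqref{def:gksl} and using diagonality of $H$ and the $L_k$'s, each of $[H, E_{i,j}]$, $L_k E_{i,j} L_k^*$, and $\{L_k^* L_k, E_{i,j}\}$ is again a scalar multiple of $E_{i,j}$, so $\Lc(E_{i,j}) = \tau_{i,j} E_{i,j}$. Matching the resulting scalar with \eqref{eq:L_eigenequation} then reduces to the elementary identity $a\bar b - \tfrac12(|a|^2 + |b|^2) = -\tfrac12|a-b|^2 + \imath \Im(a\bar b)$ together with $\Im(a\bar b) = -\Im(\bar a b)$. Diagonalizability follows since $(E_{i,j})_{i,j=1}^d$ is an orthonormal basis of $M_d(\C)$.

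For part (2), the direct implication is immediate: when $D$ and all $L_k$ are diagonal, $L_k D L_k^* = L_k^* L_k D = D L_k^* L_k$ makes the dissipator vanish on $D$, and $-\imath[H,D]$ has zero diagonal for every $H$. For the converse I would compute $(\Lc(D))_{i,i}$ for a general diagonal $D = \sum_\ell d_\ell E_{\ell,\ell}$; the Hamiltonian part drops out and one is left with
\[
(\Lc(D))_{i,i} = \sum_k \sum_{m \neq i} \bigl(|(L_k)_{i,m}|^2 d_m - |(L_k)_{m,i}|^2 d_i\bigr) .
\]
Specializing $D = E_{j,j}$ with $j \neq i$ then forces $\sum_k |(L_k)_{i,j}|^2 = 0$, hence every $L_k$ is diagonal. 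Part (3) follows quickly: either hypothesis implies $\Pi \Lc \Pi = 0$, so by (2) all $L_k$ are diagonal; then $\Lc(E_{i,i}) = -\imath[H, E_{i,i}]$, and $\Lc \Pi = 0$ forces $[H, E_{i,i}] = 0$ for every $i$, i.e.\ $H$ is diagonal. The converse direction is read off from part (1): $\tau_{i,i} = 0$ gives $\Lc \Pi = 0$, while $\tau_{i,j} E_{i,j}$ is off-diagonal for $i \neq j$, which gives $\Pi \Lc = 0$.

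For the furthermore part I would perform the analogous coordinate computation for the dual and obtain
\[
(\Lc^*(E_{j,j}))_{i,i} = \sum_k |(L_k)_{j,i}|^2 - \delta_{i,j} \sum_k \sum_m |(L_k)_{m,i}|^2 ,
\]
the Hamiltonian part again dropping out on the diagonal. The off-diagonal entries ($i \neq j$) are manifestly non-negative. Unitality of the semigroup generated by $\Lc^*$ yields $\Lc^*(\Id) = \sum_j \Lc^*(E_{j,j}) = 0$, which on the diagonal amounts to $\sum_i (\Lc^*(E_{j,j}))_{i,i} = 0$ for every $j$; these are, up to the standard transpose convention relating observables and distributions, exactly the defining properties of a Markov generator on the pointer states. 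The only real obstacle throughout is bookkeeping, in particular matching the $\pm$ signs and complex conjugations appearing in \eqref{eq:L_eigenequation}; every step otherwise consists of short direct calculations.
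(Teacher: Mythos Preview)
Your proposal is correct and follows essentially the same route as the paper: direct coordinate computations of $\Lc(E_{i,j})$ and of $(\Lc(E_{i,i}))_{j,j}$, the latter being equivalent via duality to your computation of $(\Lc^*(E_{j,j}))_{i,i}$. One small slip to fix in the final paragraph: from $\sum_j \Lc^*(E_{j,j}) = 0$ the diagonal identity you obtain is $\sum_j (\Lc^*(E_{j,j}))_{i,i} = 0$ for every $i$ (row sums vanish), not $\sum_i (\Lc^*(E_{j,j}))_{i,i} = 0$ for every $j$; equivalently, use $\tr\Lc(E_{i,i})=0$.
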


Now, let us introduce a weaker form of Assumption \ref{ass:(Id)}.

Since we require the limiting dynamics $\xb$ to be concentrated on pointer states $(E_{i,i})_{i=1}^d$, it is natural to require that, in long time, the dominating dynamics generated by $\Lc^{(2)}$ projects onto $\Span_\C\{E_{i,i}: i=1,\dotsc, d\}$: 
$$ \lim_{t\to\infty}\Pi_{\perp}e^{t\Lc^{(2)}} = 0 \ .$$
This behavior of the dominating dynamics is called decoherence. Under QND Assumption \ref{ass:(Nd)}, decoherence is equivalent to $\Re\tau_{i,j}<0$ in Eq.~\eqref{eq:L_eigenequation} for $i\neq j$, which is in turn equivalent to the following condition.

\begin{assumption}[Decoherence Condition]
\label{ass:Dec}
For any $i\neq j$, there exists $k\in\{1,\dotsc,\ell_2\}$ such that $(L_k^{(2)})_{i,i} \neq (L_k^{(2)})_{j,j}$.
\end{assumption}

Remark that the Identifiability Assumption \ref{ass:(Id)} implies this Decoherence Condition.

From the last statement of last lemma, we expect that under the QND Assumption \ref{ass:(Nd)} and the Decoherence Condition in Assumption \ref{ass:Dec}, whenever $\Lc^{(1)}=0$, the limit average dynamics will be given by the generator $\Pi{\Lc^{(0)}}^*\Pi$ of a Markov process on the pointer states $(E_{i,i})_{i=1}^d$. Next proposition proves such a limit with a Markov generator $T$ corrected by the contribution due to $\Lc^{(1)}$.  Although the proposition is a trivial consequence of our Main Theorem \ref{thm:main}, it will be a crucial step to establish the latter.

\begin{proposition}[Convergence of the mean]
\label{proposition:Lindblad_perturbation}
Assume the QND Assumption \ref{ass:(Nd)} and the Decoherence Condition in Assumption \ref{ass:Dec}. Let $(e_i)_{i=1}^d$ be the pointer basis (defined in the QND Assumption \ref{ass:(Nd)})  and $\Pi$ be the orthogonal projector onto diagonal matrices in this basis.  Then we have for any $t>0$ that\footnote{In this formula we adopt the convention that the semigroup $e^{tT^*}$ acts on a diagonal matrix $\operatorname{diag} (x_1,\dotsc,x_d)$ by transforming it in the diagonal matrix $\operatorname{diag} (e^{tT^*}(x_1,\dotsc,x_d))$.} 
$$\lim_{\gamma\to\infty} e^{t\Lc_\gamma}=\Pi e^{tT^*} \Pi$$
where $T$ is the generator of the jump Markov process $\xb = (\xb_t)_{t\ge 0}$ on the pointer basis $(e_i)_{i=1}^d$ with transition rates given by Eq.~\eqref{eq:T-rates}.

In particular, if $\rho^\gamma$ is the solution of Eq. \eqref{eq:SDE} starting from $\varrho \in {\mathcal S}$ then its mean converges
\begin{equation}
\label{eq:mean-rho}
\lim_{\gamma \to \infty} \E_{\varrho} [\rho_t^\gamma] =\E_{\mu_{\varrho}} \left[ \xb_t \xb_t^* \right]
\end{equation}
where the initial probability measure $\mu_{\varrho}$ of $\xb = (\xb_t)_{t\ge 0}$ is defined by $\mu_{\varrho} (e_i) =\langle e_i, \varrho e_i \rangle$ for $i=1,\ldots, d$.  
\end{proposition}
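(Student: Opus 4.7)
The plan is to derive the proposition directly from Corollary \ref{cor:homo Lindbladian} by verifying its hypotheses and then performing an explicit computation to identify the effective generator $\Lc_\infty$ with $T^*$ on the diagonal subspace. By Eq.~\eqref{eq:expectation_rho}, $\E_\varrho[\rho_t^\gamma]=e^{t\Lc_\gamma}\varrho$, so convergence of the semigroup directly yields the convergence of the mean.

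First I would verify that the hypotheses of Corollary \ref{cor:homo Lindbladian} are satisfied. Under the QND Assumption \ref{ass:(Nd)}, Lemma \ref{lemma:lindblad_structure}(1) gives that $\Lc^{(2)}$ is diagonal in the basis $(E_{i,j})_{i,j}$ with eigenvalues $\tau_{i,j}$ from Eq.~\eqref{eq:L_eigenequation2}. The decoherence Assumption \ref{ass:Dec} forces $\Re \tau_{i,j}<0$ for $i\neq j$, while $\tau_{i,i}=0$, so the only purely imaginary eigenvalue is $0$, i.e.\ the spectral property (i) holds. It follows that $\Ker \Lc^{(2)}=\Span_{\C}\{E_{i,i}\}_{i=1}^d$ and, since $\Lc^{(2)}$ is diagonalizable, the associated projector $\Pc$ of Eq.~\eqref{eq:def_Pi} coincides with $\Pi$. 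The QND assumption also states that $L^{(1)}_k$ is diagonal, so Lemma \ref{lemma:lindblad_structure}(\ref{it:diag_diag_0_equiv_diag}) gives $\Pi\Lc^{(1)}\Pi=0$, which is the centering condition (iii). Consequently Corollary \ref{cor:homo Lindbladian} applies and yields $\lim_{\gamma\to\infty}e^{t\Lc_\gamma}=\Pi e^{t\Lc_\infty}\Pi$ with
\[
\Lc_\infty=\Pi\Lc^{(0)}\Pi-\Pi\Lc^{(1)}(\Lc^{(2)})^{-1}\Lc^{(1)}\Pi.
\]

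The main computational step, and the one I expect to be the most delicate, is to check that $\Lc_\infty$ acts on a diagonal matrix $X=\sum_m x_m E_{m,m}$ exactly as $T^*$ acts on the corresponding measure, where $T$ is defined by Eq.~\eqref{eq:T-rates}. For the first term, a direct evaluation of $(\Pi\Lc^{(0)}(X))_{i,i}$ using the GKSL decomposition gives $\sum_{m}\sum_k\lvert (L^{(0)}_k)_{i,m}\rvert^2 x_m-x_i\sum_{m}\sum_k\lvert (L^{(0)}_k)_{m,i}\rvert^2$, which matches the Kolmogorov-backward action of $T^*$ corresponding to the rates $\sum_k|(L^{(0)}_k)_{j,i}|^2$ (the $m=i$ terms cancel). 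For the correction term, I would exploit that $L^{(1)}_k$ is diagonal so that the Kraus part of $\Lc^{(1)}$ preserves the diagonal; only the Hamiltonian survives off-diagonally, giving $(\Pi_\perp\Lc^{(1)}X)_{i,j}=-\imath H^{(1)}_{i,j}(x_j-x_i)$ for $i\neq j$. Since $E_{i,j}$ is an eigenvector of $\Lc^{(2)}$ with eigenvalue $\tau_{i,j}\neq 0$, we have $(\Lc^{(2)})^{-1}E_{i,j}=\tau_{i,j}^{-1}E_{i,j}$. Applying $\Lc^{(1)}$ again and projecting with $\Pi$, only the commutator $-\imath[H^{(1)},\cdot]$ contributes to the diagonal, and a symmetrization in $(i,j)$ using the crucial identity
\[
\tfrac{1}{\tau_{i,j}}+\tfrac{1}{\tau_{j,i}}=\tfrac{2\Re\tau_{i,j}}{|\tau_{i,j}|^2}=-\tfrac{\sum_k|(L^{(2)}_k)_{i,i}-(L^{(2)}_k)_{j,j}|^2}{|\tau_{i,j}|^2}
\]
(obtained from $\tau_{j,i}=\overline{\tau_{i,j}}$ and Eq.~\eqref{eq:L_eigenequation2}) produces precisely the symmetric rate $\frac{|H^{(1)}_{i,j}|^2}{|\tau_{i,j}|^2}\sum_k|(L^{(2)}_k)_{i,i}-(L^{(2)}_k)_{j,j}|^2$. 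The minus sign in $\Lc_\infty$ matches the usual generator form $\sum_{j\neq i}T_{j,i}(x_j-x_i)$.

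Having established $\Lc_\infty\bigr|_{\Pi V}=T^*$, the conclusion follows: $e^{t\Lc_\gamma}\varrho\to \Pi e^{tT^*}\Pi(\varrho)$. The diagonal matrix $\Pi(\varrho)$ has entries $\langle e_i,\varrho e_i\rangle=\mu_\varrho(e_i)$, so $e^{tT^*}\Pi(\varrho)$ encodes the law of $\xb_t$ with initial distribution $\mu_\varrho$, and hence $\Pi e^{tT^*}\Pi(\varrho)=\sum_i\P_{\mu_\varrho}(\xb_t=e_i)E_{i,i}=\E_{\mu_\varrho}[\xb_t\xb_t^*]$. Combined with Eq.~\eqref{eq:expectation_rho}, this establishes Eq.~\eqref{eq:mean-rho}.
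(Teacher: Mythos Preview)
Your proposal is correct and follows essentially the same approach as the paper: verify the hypotheses of Corollary~\ref{cor:homo Lindbladian} via Lemma~\ref{lemma:lindblad_structure} and the Decoherence Condition, identify $\Pc=\Pi$, and then compute $\Lc_\infty$ explicitly by noting that only the Hamiltonian part of $\Lc^{(1)}$ contributes (since the Kraus operators of $\Lc^{(1)}$ are diagonal) and using the identity $\tau_{i,j}^{-1}+\tau_{j,i}^{-1}=2\Re\tau_{i,j}/|\tau_{i,j}|^2$. The only cosmetic difference is that the paper evaluates $(\Lc_\infty(E_{i,i}))_{j,j}$ on basis elements and matches it to $T_{i,j}$, whereas you act on a general diagonal matrix $X=\sum_m x_m E_{m,m}$ and identify the result with $T^*$; these are equivalent by linearity.
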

Finally we conclude this section by giving the proofs of Lemma \ref{lemma:lindblad_structure} and Proposition \ref{proposition:Lindblad_perturbation}
\begin{proof}[Proof of Lemma \ref{lemma:lindblad_structure}]
First, we recall the GKSL decomposition of the Lindbladian $\Lc$ given in Eq.~\eqref{def:gksl}:
$$ \forall X \in M_d(\C), \quad \Lc(X) = -\imath [H,X] + \sum_{k=1}^\ell \left( L_k X L_k^* - \half \{L_k^* L_k, X \} \right) \ .$$
We are now ready to prove the lemma.

\bigskip

\noindent 1)  The eigenvalues of $\Lc$ given in Eq.~\eqref{eq:L_eigenequation} associated to the eigenvectors $E_{i,j}$ are proven via the following straightforward computation:
\begin{align*}
\Lc(E_{i,j}) = &- \imath[H,E_{i,j}] + \sum_{k=1}^\ell \left( L_k E_{i,j} L_k^* - \half \{L_k^* L_k, E_{i,j} \} \right)\\
= & - \imath \left( H_{i,i} - H_{j,j} \right) E_{i,j}+ \sum_{k=1}^\ell \left( {(L_k)_{i,i}} \overline{(L_k)_{j,j}} - \half \left| (L_k)_{i,i} \right|^2 - \half \left| (L_k)_{j,j} \right|^2  \right) E_{i,j} \ .
\end{align*}
This is indeed of the form ${\tau}_{i,j} E_{i,j}$ where
\begin{align*}
 {\tau}_{i,j}= &- \imath \left( H_{i,i} - H_{j,j} \right)
    + \sum_{k=1}^\ell \left( \imath \Im {(L_k)_{i,i}} \overline{(L_k)_{j,j}} - \half \left| (L_k)_{i,i} - (L_k)_{j,j} \right|^2  \right) \\
= & -\imath \left( H_{i,i} - H_{j,j} - \sum_{k=1}^\ell \Im {(L_k)_{i,i}} \overline{(L_k)_{j,j}} \right)
    - \half \sum_{k=1}^\ell \left| (L_k)_{i,i} - (L_k)_{j,j} \right|^2 \ .
\end{align*}

\bigskip

\noindent 2)
For all $i,j$ in $\{1, 2, \dots, d\}$, we have:
\begin{align*}
  &   \ \left( \Lc(E_{i,i}) \right)_{j,j} \\
  & = \ \langle e_j, \Lc (E_{i,i})  e_j \rangle \\
  & = \ - \imath \langle e_j, [H, E_{i,i}] e_j \rangle 
      + \sum_{k=1}^\ell \langle e_j, L_k E_{i,i} L_k^* e_j \rangle 
      - \half \langle e_j, \{ L_k^* L_k , E_{i,i} \} e_j \rangle \ .
\end{align*}
Now, because:
$$ \langle e_j, [H, E_{i,i}] e_j \rangle  = 0 \ , \quad
   \langle e_j, L_k E_{i,i} L_k^* e_j \rangle  = \left| \langle L_k e_i, e_j \rangle \right|^2 \ ,$$
$$ \langle e_j, \{ L_k^* L_k , E_{i,i} \} e_j \rangle = 2 \delta_{i,j} \left\Vert L_k e_j\right\Vert^2 \ ,$$
we obtain the formula:
\begin{align}
\label{eq:missing_formula}
      \left( \Lc(E_{i,i}) \right)_{j,j}
  & = \sum_{k=1}^\ell \left[ \left| \langle L_k e_i, e_j \rangle \right|^2 - \delta_{i,j} \left\Vert L_k e_j\right \Vert^2 \right] \ .
\end{align}
From formula \eqref{eq:missing_formula}, $\Pi\Lc\Pi=0$ iff for all $i,j,k,l$ in $\{1, 2, \dots, d\}$:
\begin{align}
\label{eq:piLpi_equivalence}
0 = & \ \left( (\Pi \Lc \Pi) (E_{i,k}) \right)_{j,l} \ = \ \left( \Lc(E_{i,i}) \right)_{j,j}\delta_{i,k}\delta_{j,l}\\
  = & \ \left(\sum_{k=1}^\ell \left| \langle L_k e_i, e_j\rangle \right|^2 - \delta_{i,j} \left\Vert L_k e_j\right\Vert^2 \right) \delta_{i,k}\delta_{j,l}
  \nonumber
  \ .
\end{align}

Picking $i \neq j$ implies $0 = \sum_{k=1}^\ell \left| \langle L_k e_i, e_j \rangle \right|^2$ and in consequence every $L_k$ is diagonal in $(e_i)_{i=1}^d$. Reciprocally, if every $L_k$ is diagonal in $(e_i)_{i=1}^d$, then Eq.~\eqref{eq:piLpi_equivalence} clearly holds.

\bigskip

\noindent 3) $\Pi\Lc=\Lc\Pi=0$ implies $\Pi \Lc \Pi=0$. Invoking the second statement of this lemma, this in turn implies that every $L_k$ is diagonal in $(e_i)_{i=1}^d$. Let us now prove that $H$ is also diagonal in this basis. Notice that the $L_k$'s being diagonal, they commute with diagonal matrices. As such, for every diagonal matrix $X \in M_d(\C)$ and $k \in \{1, \dots, d\}$, we have:
$$ L_k X L_k^* - \half \{L_k^* L_k, X \} = 0 $$
and thus $0 = \Lc \Pi (X) = \Lc (X) = -\imath [H, X]$. Hence the Hamiltonian $H$ commutes with all diagonal matricesUsing, ensuring it is diagonal itself. We have thus shown that $\Pi\Lc=\Lc\Pi=0$ implies diagonal Kraus operators and diagonal Hamiltonian. 
Reciprocally, let us assume now that $H$ and every $L_k$ is diagonal. By the first item of this lemma we have that  $\Lc (E_{i,j}) ={\tau}_{i,j} E_{i,j}$. For any $i,j$, we have then that $(\Pi \Lc)(E_{i,j}) =\delta_{i,j} {\tau}_{i,j} E_{i,i}=0$ since $\tau_{i,i}=0$. Hence $\Pi \Lc =0$. Similarly, for any $i,j$, we have $(\Lc \Pi)(E_{i,j}) =\delta_{i,j} {\tau}_{i,j} E_{i,i}=0$, so that $\Lc \Pi =0$.

\bigskip
It remains to prove that $Q=\Pi\Lc^*\Pi$ is the generator of a Markov process on the pointer states. It follows from Eq.~\eqref{eq:missing_formula}, that $Q_{i,j}=(\Lc(E_{i,i}))_{j,j}$ is non-negative for any $i\neq j$ and $\sum_{j=1}^d Q_{i,j}=0$. Thus, the lemma is proved.
\end{proof}

\begin{proof}[Proof of Proposition \ref{proposition:Lindblad_perturbation}]
We proceed in two steps. First, we show that the convergence of $e^{t \Lc_\gamma}$ is a consequence of Corollary~\ref{cor:homo Lindbladian}, by carefully checking all its hypotheses. Second, we will compute the explicit expression of $\Lc_{\infty}$ that will be expressed in terms of the Markov generator $T$.

\bigskip

{\bf Step 0: Checking the hypotheses of Corollary~\ref{cor:homo Lindbladian}.}

Thanks to the QND Assumption \ref{ass:(Nd)} and Lemma \ref{lemma:lindblad_structure} applied with $\Lc=\Lc^{(2)}$, we get that $\Lc^{(2)}$ is diagonalizable with the eigenvalues $\tau_{i,j}$ given by Eq.~\eqref{eq:L_eigenequation2} and associated eigenvector  given by $E_{i,j}=e_i e_j^*$, $1\le i, j \le d$.

\begin{enumerate}[i)]
\item From the expression of $\tau_{i,j}$, the absence of purely imaginary eigenvalues occurs if and only if for every $ 1\le i,j \le d$,
$$\left(\forall k\in \{1,\dotsc,\ell_2\},\ (L_k^{(2)})_{i,i}=(L_k^{(2)})_{j,j}\right)\; \Longrightarrow \; H^{(2)}_{i,i}=H^{(2)}_{j,j}.$$
This implication trivially holds thanks to the Decoherence Condition in Assumption \ref{ass:Dec}.
\item The centering assumption iii) of Theorem~\ref{thm:homogenization} required is a consequence of item 2) in Lemma~\ref{lemma:lindblad_structure} and the QND Assumption \ref{ass:(Nd)}. 

\end{enumerate}

\bigskip

{\bf Step 1: Identification of $\Pc$ of Theorem~\ref{thm:homogenization} and Corollary~\ref{cor:homo Lindbladian} with $\Pi$.}

From the expression in Eq.~\eqref{eq:L_eigenequation2} of $\tau_{i,j}$ we get that $\tau_{i,j}=0$ if and only if $(L_k^{(2)})_{i,i}=(L_k^{(2)})_{j,j}$ and $H^{(2)}_{i,i}=H^{(2)}_{j,j}$. The Decoherence Condition in Assumption \ref{ass:Dec} implies this is equivalent to $i=j$ and it also follows then that $\Ker \Lc^{(2)}$ is the vector space spanned by the eigenvectors $(E_{i,i})_{i=1}^d$, i.e. the set of matrices diagonal in the pointer basis. Moreover the eigenvectors $E_{i,j}$ are orthogonal for the Hilbert-Schmidt scalar product on $M_d (\C)$. It follows, by the step $0$ of the proof of Theorem \ref{thm:homogenization}, that the projector $\Pc$ coincides in the current context to the orthogonal projector $\Pi$ onto ${\rm{Span}}_{\mathbb C} (E_{i,i}\; ; \; 1\le i \le d)$, i.e. for all $X\in M_{d} (\C)$, 
\begin{equation}
\label{eq:defiinitionPi}
\Pi (X)= \sum_{i=1}^d X_{i,i} E_{i,i}.
\end{equation}

\bigskip

{\bf Step 2: The explicit expression of $\Lc_\infty$ matches $T$.}

By Corollary~\ref{cor:homo Lindbladian} and Theorem~\ref{thm:homogenization} we have that 
$$
\Lc_\infty= \Pi\Lc^{(0)}\Pi-\Pi\Lc^{(1)}\left( \Lc^{(2)} \right)^{-1}\Lc^{(1)}\Pi.
$$

Since $\Pi$ is the orthogonal projection onto the vector space of diagonal matrices, it is sufficient to compute $\Lc_\infty (E_{i,i})$. Since the latter must be a diagonal matrix, we are reduced to computing $(\Lc_\infty (E_{i,i}))_{j,j}$ and showing:
\begin{align}
\label{eq:L_equals_T}
(\Lc_\infty (E_{i,i}))_{j,j} = T_{i,j} \ ,
\end{align}
where $T_{i,j}$ is given by Eq.~\eqref{eq:T-rates}.

Since $\Pi^*(\id)=\id$ and $\Lc^*(\id)=0$ for any Lindbladian $\Lc$, we have $\Lc^*_{\infty} (\id)=0$ and consequently for any $i$, $\sum_{j=1}^d (\Lc_\infty (E_{i,i}))_{j,j} =0$. Hence it is sufficient to compute $(\Lc_\infty (E_{i,i}))_{j,j}$ for $i \ne j$. We have:
\begin{align*}
(\Lc_\infty (E_{i,i}))_{j,j}= &\left( \Lc^{(0)} (E_{i,i}) \right)_{j,j}
           -\left( \left(\Lc^{(1)}\left( \Lc^{(2)} \right)^{-1}\Lc^{(1)}\right)  (E_{i,i}) \right)_{j,j} \\
        = & \sum_{k=1}^{\ell_0} | ( L_{k}^{(0)})_{j,i} |^2
           -\left( \left(\Lc^{(1)}\left( \Lc^{(2)} \right)^{-1}\Lc^{(1)} \right) ( E_{i,i} ) \right)_{j,j} 
\end{align*}
where in the second equality we used  Eq.~\eqref{eq:missing_formula} (proved during the proof of item (2) of Lemma \ref{lemma:lindblad_structure} proved later)
\begin{align*}
      \left( \Lc^{(0)}(E_{i,i}) \right)_{j,j}
  & = \sum_{k=1}^\ell \left[ \left| \langle L^{(0)}_k e_i, e_j \rangle \right|^2 - \delta_{i,j} \left\Vert L_k^{(0)} e_j\right \Vert^2 \right] \ .
\end{align*}

Given the structure of $\Lc^{(1)}$, all Kraus operators being diagonal, we have that the diagonal $E_{i,i}$ matrix maps to:
$$ \Lc^{(1)} (E_{i,i}) =-\imath  [H^{(1)} , E_{i,i} ] = -\imath \sum_{\substack{1 \leq j \leq n \\ i \neq j}} \left( (H^{(1)})_{j,i} E_{j,i} - (H^{(1)})_{i,j} E_{i,j} \right) \ .$$
Hence, $\Lc^{(2)}$ being diagonalizable in the natural matrix basis, the pseudo-inverse $\left( \Lc^{(2)} \right)^{-1}$ yields:
$$\left( \left( \Lc^{(2)} \right)^{-1} \Lc^{(1)} \right) (E_{i,i})
 = -\imath \sum_{\substack{1 \leq j \leq n \\ i \neq j}}
     \left( \frac{H^{(1)}_{j,i} }{\tau_{j,i}} E_{j,i}
          - \frac{H^{(1)}_{i,j} }{\tau_{i,j}} E_{i,j}
     \right)
 \ .
$$
Now, before applying $\Pi \Lc^{(1)}$, recall that the Kraus operators associated to $\Lc^{(1)}$ are diagonal. As such, the non-Hamiltonian part of $\Lc^{(1)}$:
$$ X \mapsto \sum_{k=1}^{\ell_1} L_k^{(1)} X (L_k^{(1)})^* - \half \{ (L_k^{(1)})^* L_k^{(1)}, X\}$$
is diagonalizable in the pointer state basis $\left(E_{i,j}\right)_{i,j=1}^d$. Therefore, since 
$$\left( \left( \Lc^{(2)} \right)^{-1} \Lc^{(1)}\right) (E_{i,i}) \in \Span_\C\left( E_{i,j}, E_{j,i}  \ ; \ i \neq j \right) \ ,$$
only the Hamiltonian part $X \mapsto -\imath[H_1, X]$ contributes in:
\begin{align*}
    \ - \left(\Pi \Lc^{(1)} \left( \Lc^{(2)} \right)^{-1} \Lc^{(1)} \right)(E_{i,i})
 = & \ +\imath\  \Pi  \left[ H^{(1)},  \left(\left( \Lc^{(2)} \right)^{-1} \Lc^{(1)} \right) (E_{i,i}) \right] \\
 = & \ 
     \sum_{\substack{1 \leq j \leq n \\ i \neq j}}
     \left( \frac{H^{(1)}_{j,i}}{\tau_{j,i}} \Pi [H^{(1)}, E_{j,i}]
          - \frac{H^{(1)}_{i,j}}{\tau_{i,j}} \Pi [H^{(1)}, E_{i,j}]
     \right) \ .
\end{align*}
Using the fact that:
$$ \Pi [H^{(1)}, E_{i,j}]
 = \left( H^{(1)} \right)_{j,i} \left( E_{j,j} - E_{i,i} \right) \ ,
$$
that $H=H^*$ and $\tau_{i,j} = \overline{ \tau_{j,i} }$, the computation continues as follows:
\begin{align*}
   & \ - \left(\Pi \Lc^{(1)} \left( \Lc^{(2)} \right)^{-1} \Lc^{(1)} \right) (E_{i,i})\\
 = & \
     \sum_{\substack{1 \leq j \leq n \\ i \neq j}}
     \left( \frac{H^{(1)}_{j,i}}{\tau_{j,i}}  H^{(1)}_{i,j} \left( E_{i,i} - E_{j,j} \right) 
          - \frac{H^{(1)}_{i,j}}{\tau_{i,j}} H^{(1)}_{j,i} \left( E_{j,j} - E_{i,i} \right)
     \right)
     \\
     =& \sum_{\substack{1 \leq j \leq n \\ i \neq j}} \left( \frac{1}{\tau_{j,i}} + \frac{1}{\tau_{i,j}} \right) \Big\vert H^{(1)}_{i,j}\Big\vert^2 (E_{i,i}-E_{j,j}) \\
     =&  \sum_{\substack{1 \leq j \leq n \\ i \neq j}} \frac{2\Re(\tau_{i,j})}{|\tau_{i,j}|^2}\Big\vert H^{(1)}_{i,j} \Big\vert^2 (E_{i,i}-E_{j,j}) \ .
\end{align*}
Putting everything together, we find the result:
\begin{align*}
(\Lc_\infty (E_{i,i}))_{j,j}= & \sum_{k=1}^{\ell_0} | ( L_0^{(k)})_{j,i} |^2
           -\frac{2\Re(\tau_{i,j})}{|\tau_{i,j}|^2}
     \left| H^{(1)}_{i,j} \right|^2 \\
        = & \sum_{k=1}^{\ell_0} | ( L_0^{(k)})_{j,i} |^2
           +\frac{\left|  H^{(1)}_{i,j} \right|^2}{|\tau_{i,j}|^2}
            \sum_{k=1}^{\ell_2} \left| (L_k^{(2)})_{i,i} - (L_k^{(2)})_{j,j} \right|^2 \ .
\end{align*}
Finally we recognize the transition rate $T_{i,j}$ defined in Eq.~\eqref{eq:T-rates}.

\bigskip

{\bf Step 3: Mean convergence of $\rho^\gamma$.}

Let us now prove Eq.~\eqref{eq:mean-rho}. By exponentiating the diagonal operator $\Lc_\infty$, Eq.~\eqref{eq:L_equals_T} implies that 
\begin{align*}
    \left( e^{t \Lc_\infty} (E_{i,i}) \right)_{j,j} 
& = \left( e^{t T} \right)_{i,j} 
  = \P_{e_i} \left[ \xb_t = e_j \right] \ .
\end{align*}

Hence by linearity we have
\begin{align*}
    \left(\Pi e^{t\Lc_\infty} \Pi \right) (\varrho)
= & \ \sum_{i,j=1}^d {\varrho}_{i,i} (e^{t \Lc_\infty} (E_{i,i}))_{j,j} E_{j,j}\\
= & \ \sum_{i,j=1}^d \varrho_{i,i} {\mathbb P}_{e_i} \left[ \xb_t = e_j \right] E_{j,j}\\
= & \ \E_{\mu_{\varrho}} \left[\xb_t \xb_t^* \right].
\end{align*}

\end{proof}

\section{Proof of Main Theorem \ref{thm:main}}
\label{section:proof}

In this section, we tackle the proof of the main theorem. Let us give a quick sketch of proof, which is done in five steps, each corresponding to its own subsection.

\begin{itemize}

\item {\bf Step 1 in Subsection \ref{subsection:decomposition_of_traj}:} There, we lay the groundwork thanks to a convenient trajectorial decomposition. Indeed, thanks to the QND Assumption \ref{ass:(Nd)}, many super-operators act diagonally in the pointer states basis $(E_{i,j})_{i,j=1}^d$, and it will be convenient to write $\rho^\gamma$ as a perturbation of a diagonal flow on matrices $\left( T^\gamma_{0, t} \ ; \ t \geq 0 \right)$. This is the content of Proposition \ref{proposition:traj_decomposition}.

\item {\bf Step 2 in Subsection \ref{subsection:tightness}:} Thanks to a tightness criterion which is recalled in the text, Proposition \ref{proposition:tightness} handles tightness of $\rho^\gamma$ in the Meyer-Zheng topology. A key technical point, Lemma \ref{lemma:lyapounov} is used but proven later.

In passing, we confirm the physical intuition of decoherence by showing that any limiting subsequence must have zero off-diagonal coefficients.

\item {\bf Step 3 in Subsection \ref{subsection:gronwall}:} This subsection contains the proof of Lemma \ref{lemma:lyapounov}.

\item {\bf Step 4 in Subsection \ref{subsection:time_control}:} There, we prove that, on average, the time spent by $\rho^\gamma$ away from the pointer states $\left( E_{i,i} \right)_{i=1}^{d}$ vanishes as $\gamma \rightarrow \infty$.

\item {\bf Step 5 in Subsection \ref{subsection:fd_distributions}:} Finally, we identify the limiting process $\xb \xb^*$ thanks to its finite-dimensional distributions. The last step crucially relies on the convergence of the mean established in Proposition \ref{proposition:Lindblad_perturbation}.

\end{itemize}

In this whole section, the projection $\Pi$ onto diagonal matrices in the pointer basis $(e_i)_{i=1}^d$, is actually an orthogonal projection: $\Pi=\Pi^*$.

\subsection{Decomposition of the trajectories}
\label{subsection:decomposition_of_traj}
We define a stochastic flow $(T^\gamma_{s,t})_{t\geq s}$ taking value in $\End( M_d(\C) )$ that we will use to decompose the evolution of $\rho^\gamma$. This process will induce a time exponential decay of the off diagonal elements of the state at a rate of order $\gamma^2$. Furthermore, it will be particularly tractable as $T^\gamma$ will act diagonally in the natural basis $\left( E_{i,j} \right)_{i,j=1}^d$, and will thus fully leverage the QND Assumption.

The definition of $T^\gamma$ requires a few ingredients. First, define the non-Hamiltonian i.e. dissipative part of $\Lc^{(1)}$ as:
$$ \Lc_{\textrm{diss}}^{(1)}:X\mapsto \sum_{k=1}^{\ell_1} L_k^{(1)} X (L_k^{(1)})^* -\frac12\left\{(L_k^{(1)})^*L_k^{(1)},X\right\} \ .$$
Now recall from the QND Assumption \ref{ass:(Nd)} that Kraus operators associated to $\Lc^{(i)}$, $i=1,2$ and $H^{(2)}$ are all diagonal. As such, a fruitful idea is to use $\Lc_{\textrm{diss}}^{(1)}$ and $\Lc^{(2)}$ in order to form the stochastic flow $T^\gamma$. 

Second, we shall require a certain modification in the volatilities in Eq.~\eqref{eq:def_sigma}. For $\rho$ and $\xi$ two matrices in $M_d(\C)$, define $\sigma^{(\alpha)} (\rho, \xi) :=\left(\sigma^{(\alpha)}_k (\rho, \xi) \right)_{k=1}^{\ell_\alpha}  \in (M_d (\C))^{\ell_\alpha} $ by:
\begin{align}
\label{eq:def_sigma_two}
\sigma^{(\alpha)}_k (\rho, \xi) = & \ {\sqrt{\eta_\alpha (k)}} \left(L^{(\alpha)}_k \xi+\xi L^{(\alpha)\, *}_k-\tr \Big[(L^{(\alpha)\, *}_k+L^{(\alpha)}_k)\rho \Big]\xi \right) \ ,
\end{align}
so that the above quantity is linear in $\xi$. We write $\sigma_\alpha(\rho, \rho) = \sigma_\alpha(\rho)$, which coincides with the notation throughout the paper.

For any $\gamma>0$ and $s\geq 0$, let $(T^\gamma_{s,t})_{t\geq s}$ be a process taking value in $\textrm{End}( M_d(\C) )$ and solution to (recall Section \ref{subsec:not} for the notation below) 
\begin{align}
\left\{
\begin{array}{ccc}
T^\gamma_{s,s}  & = & \id_{M_d(\C)} \\
dT^\gamma_{s,t} & = & \Big( (\gamma\Lc_{\textrm{diss}}^{(1)}+\gamma^2 \Lc^{(2)} ) \, dt + \gamma^{\frac12}\sigma^{(1)}(\rho^\gamma_t, \cdot)\cdot dW^{1}_t \\
                &   & \quad +\gamma\sigma^{(2)} (\rho^\gamma_t, \cdot)\cdot dW^{2}_t \Big) \circ T^\gamma_{s,t} ,
\end{array}
\right.
\label{eq:def_T}
\end{align}
where $\rho^\gamma$ is the solution to the SDE \eqref{eq:SDE} and $\circ$ is the composition in $\textrm{End}( M_d(\C) )$. Notice that $\gamma\Lc_{\textrm{diss}}^{(1)}+\gamma^2 \Lc^{(2)}$ are understood as elements of $\textrm{End}( M_d(\C) )$ and the same goes for $\gamma^{\frac12}\sigma^{(1)} (\rho^\gamma_t, \cdot)\cdot dW^{(1)}_t + \gamma\sigma^{(2)} (\rho^\gamma_t, \cdot )\cdot dW^{(2)}_t$ which are (infinitesimal) linear operator of  $\textrm{End}( M_d(\C) )$. Here the $\cdot$ between parentheses denotes the free variable in $M_d (\C)$.

We can now state the main result of this subsection:
\begin{proposition}
\label{proposition:traj_decomposition}
Under our working assumptions, the following hold:
	
\begin{enumerate}[1)]

	\item For any $\gamma>0$, the stochastic differential equation \eqref{eq:def_T} has a unique strong solution. 

	\item The stochastic flow $T^\gamma_{0, \cdot} = \left( T^\gamma_{0,t} \ ; \ t \geq 0 \right)$ acts diagonally in the matrix basis $\left( E_{i,j} \right)_{i,j=1}^d$ i.e.
\begin{align}
\label{eq:T_is_diagonal}
    T^\gamma_{0,t}\left( E_{i,j} \right)
= & \ \left[ T^\gamma_{0,t} \right]_{i,j} E_{i,j} \ ,
\end{align}
for $(i,j)\in\{1,\dotsc,d\}^2$. Moreover $\left[ T^\gamma_{0,t} \right]_{i,j}$ is a semimartingale of the form
\begin{align}
	\label{eq:expression_T}
	    \left[ T^\gamma_{0,t} \right]_{i,j}
	    =&  \ e^{ -\frac t2 D^\gamma_{i,j} + \imath \theta_{i,j}^\gamma(t) }
 \ \Ec\left( \sum_{\alpha=1}^2 \int_0^t \gamma^{{\alpha}/{2}} \, a_{i,j}^{\alpha,\gamma} (s) \cdot dW_s^\alpha \right)  \ ,
%
\end{align}
    where $ a_{i,j}^{\alpha,\gamma}$ are real semimartingales, taking value in compact subsets, independent of $\gamma$, of respectively $\R^{\ell_1}$ and $\R^{\ell_2}$; $\Ec(M)_t := \exp(-\frac12\langle M \rangle_t+M_t-M_0)$ is the Dol\'eans--Dade exponential for any real continuous martingale $M$; $\theta^\gamma_{i,j}$ is a real continuous semimartingale. Finally, for any $(i,j)\in\{1,\dotsc,d\}^2$, the constant $D_{i,j}^\gamma$ is given by
	\begin{align}
	\label{eq:exprDij}
  D^\gamma_{i,j} & = \ \sum_{\alpha=1}^2 \gamma^{\alpha} \, D_{i,j}^{\alpha, \gamma} 
    \end{align}
   where:
   \begin{align*}
       D_{i,j}^{\alpha, \gamma}
   & = \  \sum_{k=1}^{\ell_\alpha} \Re \left[ (L_k^{(\alpha)})_{i,i} - (L_k^{(\alpha)})_{j,j} \right]^2 \\
   & \quad \quad 
        + \sum_{k=1}^{\ell_\alpha} (1 - \eta_\alpha(k)) \Im \left[ (L^{(\alpha)}_k)_{i,i} - (L^{(\alpha)}_k)_{j,j} \right]^2 .
\end{align*}

    \item For any $s\geq 0$, $T^\gamma_{0,s}$ is invertible and $T^\gamma_{s,t}=T^\gamma_{0,t}\circ (T^\gamma_{0,s})^{-1}$. 
    
    \item The stochastic flow $T^\gamma_{0, \cdot}$ allows to express the solution $\rho^\gamma$ to the SDE \eqref{eq:SDE} as
\begin{align}
\label{eq:rho_T_perturbation}
    \rho_t^\gamma
= & \ T_{0,t}^\gamma \left( \varrho + \int_0^t  \left[(T_{0,s}^\gamma)^{-1}\circ(\Lc^{(0)}-\imath \gamma \operatorname{ad}_{H^{(1)}}) \right] (\rho_s^\gamma)\,  ds \right. \\
  & \left.\qquad \qquad + \int_0^t(T_{0,s}^\gamma)^{-1}\Big( \sigma^{(0)} (\rho^\gamma_s) \cdot dW_s^0\Big)\;   \right)
    \nonumber
\end{align}
where $\operatorname{ad}_H:X\in M_d(\C)\mapsto [H,X]$ is the adjoint action associated to a matrix $H \in M_d(\C)$.
\end{enumerate}
\end{proposition}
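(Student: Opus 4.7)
The plan is to verify each of the four items in order, leaning on the QND Assumption~\ref{ass:(Nd)} which forces the various super-operators entering the equation for $T^\gamma_{s,t}$ to act diagonally in the basis $(E_{i,j})_{i,j=1}^d$, thereby reducing \eqref{eq:def_T} to a family of scalar linear SDEs.

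\medskip

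\emph{Item 1 (existence and uniqueness).} The SDE~\eqref{eq:def_T} is a linear SDE for $T^\gamma_{s,t}\in\End(M_d(\C))$ whose coefficients depend on $\rho^\gamma_t$ only through $\sigma^{(\alpha)}(\rho^\gamma_t,\cdot)$. Since $\rho^\gamma_t\in\Sc$ a.s.\ and $\Sc$ is compact, these coefficients are uniformly bounded, continuous in $t$ and linear in the unknown. Standard theory for linear SDEs with bounded adapted coefficients yields a unique strong solution.

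\medskip

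\emph{Item 2 (diagonal action and explicit form).} I will first check that the three building blocks act diagonally on the matrix basis:
\begin{itemize}
\item $\Lc^{(2)}(E_{i,j})=\tau_{i,j}E_{i,j}$ by Lemma~\ref{lemma:lindblad_structure} (item 1);
\item $\Lc^{(1)}_{\textrm{diss}}(E_{i,j})=\tau^{(1),\textrm{diss}}_{i,j}E_{i,j}$ by the same computation without the Hamiltonian contribution, since each $L^{(1)}_k$ is diagonal;
\item $\sigma^{(\alpha)}_k(\rho^\gamma_t,E_{i,j})=\mu^{(\alpha)}_{k,i,j}(t)\,E_{i,j}$ for $\alpha\in\{1,2\}$, with
$$\mu^{(\alpha)}_{k,i,j}(t)=\sqrt{\eta_\alpha(k)}\Bigl(( L^{(\alpha)}_k)_{i,i}+\overline{(L^{(\alpha)}_k)_{j,j}}-\tr\bigl[(L^{(\alpha)\,*}_k+L^{(\alpha)}_k)\rho^\gamma_t\bigr]\Bigr),$$
since $L^{(\alpha)}_k$ diagonal implies $L^{(\alpha)}_k E_{i,j}=(L^{(\alpha)}_k)_{i,i}E_{i,j}$ and $E_{i,j}L^{(\alpha)\,*}_k=\overline{(L^{(\alpha)}_k)_{j,j}}E_{i,j}$.
\end{itemize}
By uniqueness of strong solution, the one-dimensional subspaces $\C\cdot E_{i,j}$ are invariant under $T^\gamma_{0,t}$, so $T^\gamma_{0,t}(E_{i,j})=[T^\gamma_{0,t}]_{i,j}E_{i,j}$ and each scalar $[T^\gamma_{0,t}]_{i,j}$ solves a complex linear SDE of the form $dX_t=b\,X_t\,dt+\sum_{\alpha,k}\gamma^{\alpha/2}\mu^{(\alpha)}_{k,i,j}(t)X_t\,dW^{\alpha,k}_t$ with $b=\gamma\tau^{(1),\textrm{diss}}_{i,j}+\gamma^2\tau_{i,j}$ and $X_0=1$. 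Applying Itô's formula to $\log X_t$ and splitting the complex coefficients as $\mu^{(\alpha)}_{k,i,j}=u^{(\alpha)}_{k,i,j}+\imath v^{(\alpha)}_{k,i,j}$ yields
$$\log X_t=b\,t+\sum_{\alpha,k}\gamma^{\alpha/2}\int_0^t\mu^{(\alpha)}_{k,i,j}(s)\,dW^{\alpha,k}_s-\tfrac12\sum_{\alpha,k}\gamma^{\alpha}\int_0^t\bigl(\mu^{(\alpha)}_{k,i,j}(s)\bigr)^2\,ds.$$
The key bookkeeping, which I expect to be the main technical step, is to collect real deterministic, real stochastic, and purely imaginary contributions: the constant $v^{(\alpha)}_{k,i,j}=\sqrt{\eta_\alpha(k)}(\Im (L^{(\alpha)}_k)_{i,i}-\Im (L^{(\alpha)}_k)_{j,j})$ contributes $+\tfrac{t}{2}\gamma^\alpha \eta_\alpha(k)(\Im(L^{(\alpha)}_k)_{i,i}-\Im(L^{(\alpha)}_k)_{j,j})^2$ to the real deterministic part, which combines with $\Re b\cdot t$ to produce exactly $-\tfrac{t}{2}D^\gamma_{i,j}$ as defined in \eqref{eq:exprDij}. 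The real stochastic part $\sum_{\alpha,k}\gamma^{\alpha/2}\int_0^t u^{(\alpha)}_{k,i,j}(s)dW^{\alpha,k}_s-\tfrac12\sum_{\alpha,k}\gamma^\alpha\int_0^t(u^{(\alpha)}_{k,i,j})^2 ds$ is a Doléans--Dade exponential with $a^{\alpha,\gamma}_{i,j,k}(s)=u^{(\alpha)}_{k,i,j}(s)$, taking values in a compact set independent of $\gamma$ since $\rho^\gamma_t\in\Sc$ and $L^{(\alpha)}_k$ is a fixed diagonal matrix. All remaining terms are purely imaginary and form the continuous semimartingale $\imath\theta^\gamma_{i,j}(t)$.

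\medskip

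\emph{Item 3 (invertibility and cocycle).} Since a Doléans--Dade exponential of a continuous martingale is strictly positive and the prefactor $e^{-\tfrac{t}{2}D^\gamma_{i,j}+\imath\theta^\gamma_{i,j}(t)}$ never vanishes, each $[T^\gamma_{0,t}]_{i,j}$ is a.s.\ nonzero for all $t\ge 0$, hence $T^\gamma_{0,t}$ is a.s.\ invertible with $(T^\gamma_{0,t})^{-1}(E_{i,j})=[T^\gamma_{0,t}]_{i,j}^{-1}E_{i,j}$. Setting $U_t:=T^\gamma_{0,t}\circ(T^\gamma_{0,s})^{-1}$ for $t\ge s$, one has $U_s=\id$ and, since \eqref{eq:def_T} is a left-linear SDE in its solution and $(T^\gamma_{0,s})^{-1}$ is $\Fc_s$-measurable, $U_t$ satisfies the same SDE as $T^\gamma_{s,t}$; strong uniqueness from item 1 gives $T^\gamma_{s,t}=U_t$.

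\medskip

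\emph{Item 4 (variation of constants).} Let $Y_t$ be the matrix-valued process defined by the right-hand side inside $T^\gamma_{0,t}(\cdot)$ in \eqref{eq:rho_T_perturbation}. It suffices to show $T^\gamma_{0,t}(Y_t)$ satisfies the same SDE \eqref{eq:SDE} as $\rho^\gamma_t$ starting from $\varrho$ and invoke strong uniqueness. By Itô's product rule for the bilinear evaluation of operator-valued and matrix-valued semimartingales,
$$d\bigl(T^\gamma_{0,t}(Y_t)\bigr)=\bigl(dT^\gamma_{0,t}\bigr)(Y_t)+T^\gamma_{0,t}(dY_t)+d\langle T^\gamma_{0,\cdot},Y\rangle_t.$$
The cross-variation vanishes because $Y_t$ is driven only by $W^0$ (and bounded variation terms) while $T^\gamma_{0,t}$ is driven only by $W^1,W^2$, and $W^0,W^1,W^2$ are independent. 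The first term reproduces, using $\sigma^{(\alpha)}(\rho^\gamma_t,\rho^\gamma_t)=\sigma^{(\alpha)}(\rho^\gamma_t)$, the contributions $(\gamma\Lc^{(1)}_{\textrm{diss}}+\gamma^2\Lc^{(2)})(\rho^\gamma_t)\,dt+\gamma^{1/2}\sigma^{(1)}(\rho^\gamma_t)\cdot dW^1_t+\gamma\sigma^{(2)}(\rho^\gamma_t)\cdot dW^2_t$, while the second produces $(\Lc^{(0)}-\imath\gamma\operatorname{ad}_{H^{(1)}})(\rho^\gamma_t)\,dt+\sigma^{(0)}(\rho^\gamma_t)\cdot dW^0_t$ after cancellation of $(T^\gamma_{0,t})^{-1}$ and $T^\gamma_{0,t}$. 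Summing and using the identity $\Lc^{(1)}_{\textrm{diss}}-\imath\operatorname{ad}_{H^{(1)}}=\Lc^{(1)}$ gives precisely the drift $\Lc_\gamma(\rho^\gamma_t)$, proving \eqref{eq:rho_T_perturbation}.
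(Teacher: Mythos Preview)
Your proof plan follows the paper's argument essentially step for step: items 1--3 are correct and match the paper's treatment (linear SDE with bounded coefficients; diagonal action of all super-operators under QND forcing a decoupled family of scalar SDEs solved via the stochastic exponential, with the key observation that the imaginary part of each $\mu^{(\alpha)}_{k,i,j}$ is deterministic; non-vanishing of Dol\'eans--Dade exponentials plus uniqueness for the cocycle).

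There is, however, a small logical slip in item~4. The It\^o product rule applied to $\xi^\gamma_t:=T^\gamma_{0,t}(Y_t)$ does \emph{not} reproduce the SDE~\eqref{eq:SDE}: the contribution $(dT^\gamma_{0,t})(Y_t)$ yields $(\gamma\Lc^{(1)}_{\textrm{diss}}+\gamma^2\Lc^{(2)})(\xi^\gamma_t)\,dt$ and $\gamma^{\alpha/2}\sigma^{(\alpha)}(\rho^\gamma_t,\xi^\gamma_t)\cdot dW^\alpha_t$ for $\alpha=1,2$, with the unknown $\xi^\gamma_t$ in the second slot, not $\rho^\gamma_t$. So you cannot invoke strong uniqueness for~\eqref{eq:SDE}. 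What you actually obtain is a \emph{linear} SDE in $\xi^\gamma$ with $\rho^\gamma$ as an external adapted input; the correct conclusion (which is what the paper does) is to observe that $\rho^\gamma$ itself solves this linear SDE --- this is where the identities $\sigma^{(\alpha)}(\rho^\gamma_t,\rho^\gamma_t)=\sigma^{(\alpha)}(\rho^\gamma_t)$ and $\Lc^{(1)}_{\textrm{diss}}-\imath\operatorname{ad}_{H^{(1)}}=\Lc^{(1)}$ are used --- and then invoke uniqueness for that linear equation to conclude $\xi^\gamma=\rho^\gamma$. Once you rephrase the uniqueness step this way, your argument is complete and identical to the paper's.
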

\begin{rmk}
In the spirit of Duhamel's principle, the full dynamic $\rho^\gamma$ should be a perturbation of $T^\gamma$, which is Eq. \eqref{eq:rho_T_perturbation}. The perturbation term between parenthenses is exactly the non-QND part of the dynamic. 
\end{rmk}
\begin{proof}
    We prove each claim sequentially:
    
    \medskip
    
	1) The stochastic differential equation \eqref{eq:def_T} admits a unique strong solution, because given a fixed $\rho^\gamma$, the coefficients are globally Lipschitz. 
	
	\medskip
	
	2) 	Given the QND Assumption \ref{ass:(Nd)} on $\Lc^{(1)}$ and $\Lc^{(2)}$, and Lemma \ref{lemma:lindblad_structure}, for any $\rho\in\Sc$, $\Lc_{\textrm{diss}}^{(1)}$, $\Lc^{(2)}$ are all diagonal in the basis $\left( E_{i,j} \right)_{i,j=1}^d$ of $M_d(\C)$. The same holds for all the components of $\sigma^{(1)} (\rho , \cdot)$ and $\sigma^{(2)} (\rho , \cdot)$. Therefore, Eq.~\eqref{eq:def_T} can be written as a system of $d^2$ independent equations, one for each $E_{i,j}$. As such, Eq.~\eqref{eq:T_is_diagonal} clearly holds.
	
	We now fix arbitrarily such $i$ and $j$, and then proceed with a much closer inspection. Recalling from the QND Assumption \ref{ass:(Nd)} and Lemma \ref{lemma:lindblad_structure} that $E_{i,j}$ is an eigenvector for the Lindbladians with explicit eigenvalues:
$$ \Lc^{(1)}_{\rm{diss}} \left( E_{i,j} \right) = \tau_{i,j}^{(1)} E_{i,j} \ , \quad
   \Lc^{(2)}\left( E_{i,j} \right) = \tau_{i,j}^{(2)} E_{i,j} \ . 
$$	
Moreover, from Eq.~\eqref{eq:def_sigma_two}, using the fact that Kraus operators are diagonal, we have for $\alpha=1,2$ and $k\in \{1, \ldots, \ell_\alpha\}$:
\begin{align*}
     \sigma_k^{(\alpha)} (\rho_t^\gamma, E_{i,j})
= & \ \left( (L^{(\alpha)}_k)_{i,i} + (L^{(\alpha)\, *}_k)_{j,j}-\tr \Big[(L^{(\alpha)\, *}_k+L^{(\alpha)}_k)\rho_t^\gamma \Big] \right) {\sqrt{\eta_\alpha (k)}} E_{i,j} \ .
\end{align*}
As such, we obtain martingale increments for $\alpha=1,2$
\begin{align*}
   & \sigma^{(\alpha)} (\rho_t^\gamma, E_{i,j}) \cdot dW_t^\alpha \\
 = & \ E_{i,j} \; \sum_{k=1}^{\ell_\alpha} \left( (L^{(\alpha)}_k)_{i,i} + (L^{(\alpha)\, *}_k)_{j,j}-\tr[(L^{(\alpha)\, *}_k+L^{(\alpha)}_k)\rho_t^\gamma] \right) {\sqrt{\eta_\alpha (k)}} dW_t^{\alpha,k}  \\
:= & \ E_{i,j} \ z_{i,j}^{\alpha, \gamma} (t) \cdot d W_t^\alpha \ ,
\end{align*}
where for $\alpha=1,2$, $z^{\alpha, \gamma}_{i,j}$ is a $\C^{\ell_\alpha}$ valued predictable process. Observe that since $\rho^\gamma_t$ belongs to the compact set $\mathcal S$ for any $t\ge 0$, each process $z^{\alpha, \gamma}_{i,j}$ lives in a compact subset of $\C$.

\medskip

In the end, the system of Eq.~\eqref{eq:def_T} becomes:
$$ d\left[ T_{0,t}^\gamma \right]_{i,j}
  = \left[ T_{0,t}^\gamma \right]_{i,j}
    \ \sum_{\alpha=1}^2 \left\{  \gamma^{\alpha} \tau_{i,j}^{(\alpha)} \ dt \; + \;  \gamma^{\alpha/2}\,  z_{i,j}^{\alpha, \gamma} (t) \cdot dW^\alpha_t \right\}.$$    
Using It\^o calculus:
 \begin{align*}
&	\left[ T_{0,t}^\gamma \right]_{i,j} =\\
&\exp{\left( \sum_{\alpha=1}^2 \left\{  \gamma^{\alpha} \tau_{i,j}^{(\alpha)} \ t \; + \;  \int_0^t \gamma^{\alpha/2}\,  z_{i,j}^{\alpha, \gamma} (s) \cdot dW^\alpha_s - \half   \sum_{k=1}^{\ell_\alpha}  \int_0^t \gamma^{\alpha}\,  (z_{i,j}^{\alpha, \gamma} (s))_k^2 \, ds
	           \right\} \right)} \ .
\end{align*}   
We isolate the imaginary part in the exponential as the real semimartingale $\theta_{i,j}^\gamma$, and define the real semimartingales 
$$a_{i,j}^{\alpha, \gamma} (t) :=\Re z_{i,j}^{\alpha, \gamma} (t), \quad b_{i,j}^{\alpha, \gamma} (t) :=\Im z_{i,j}^{\alpha, \gamma} (t),$$
so that:
\begin{align*}
	\left[ T_{0,t}^\gamma \right]_{i,j}
& = e^{\imath \theta^\gamma_{i,j}(t) + t \Re \big( \gamma \tau_{i,j}^{(1)} + \gamma^2 \tau_{i,j}^{(2)} \big)} \; \Ec\left(  \sum_{\alpha=1}^2 \int_0^t \gamma^{\alpha/2}  a^{\alpha,\gamma}_{i,j}(s) \cdot dW^\alpha_s\right)\\
    & \quad \quad \times\, \exp\left( 
	           \half   \sum_{\alpha=1}^2 \sum_{k=1}^{\ell_\alpha}  \int_0^t \gamma^{\alpha}\,  (b_{i,j}^{\alpha, \gamma} (s))_k^2 \, ds
	            \right) \ .
\end{align*}
At this point, we see that we are very close to the result. All that remains is to handle the last exponential term and force the appearance of the term $D^\gamma_{i,j}$. To that endeavor, it is crucial to notice that because $\rho_s^\gamma$  is a density matrix and therefore Hermitian, $\tr \left[(L^{(\alpha)\, *}_k+L^{(\alpha)}_k)\rho_s^\gamma\right]$ is real for $\alpha=1,2$ and thus
\begin{align*}
    \left[  (b_{i,j}^{\alpha, \gamma}(s))_k \right]^2
= & \ \eta_\alpha(k) \Im^2\left[ (L^{(\alpha)}_k)_{i,i} + (L^{(\alpha)\, *}_k)_{j,j}-\tr[(L^{(\alpha)\, *}_k+L^{(\alpha)}_k)\rho_s^\gamma]) \right]\\
= & \ \eta_\alpha(k) \Im^2\left[ (L^{(\alpha)}_k)_{i,i} - (L^{(\alpha)}_k)_{j,j} \right] \ .
\end{align*}
In the end:
\begin{align*}
	\left[ T_{0,t}^\gamma \right]_{i,j}
& = \exp\left( \imath \theta_{i,j}^\gamma (t) - \frac{t}{2}\,  D^\gamma _{i,j} \right)
    \  \Ec\left(  \sum_{\alpha=1}^2 \int_0^t \gamma^{\alpha/2}\,  a_{i,j}^{\alpha,\gamma} (s) \cdot dW^\alpha_s\right) \ .
\end{align*}
where
\begin{align*}
D_{i,j}^\gamma  & := \ - 2 \Re \left( \gamma \tau_{i,j}^{(1)} + \gamma^2 \tau_{i,j}^{(2)} \right) 
             \ - \gamma   \sum_{k=1}^{\ell_1} \eta_1(k) \Im^2\left[ (L^{(1)}_k)_{i,i} - (L^{(1)}_k)_{j,j} \right] \\
	    &    \ \quad \quad - \gamma^2 \sum_{k=1}^{\ell_2} \eta_2(k) \Im^2\left[ (L^{(2)}_k)_{i,i} - (L^{(2)}_k)_{j,j} \right]  \ .
\end{align*}
Thanks to the expressions of eigenvalues in Eq.~\eqref{eq:L_eigenequation}, this can be made more explicit as:
\begin{align*}
  & D_{i,j}^\gamma \\
  & = \ \gamma     \sum_{k=1}^{\ell_1} \left( \left| (L_k^{(1)})_{i,i} - (L_k^{(1)})_{j,j} \right|^2 
                                                      - \eta_1(k) \left[ \Im(L^{(1)}_k)_{i,i} - \Im (L^{(1)}_k)_{j,j} \right]^2 \right)\\
  & \quad   \ + \gamma^2 \sum_{k=1}^{\ell_2} \left( \left| (L_k^{(2)})_{i,i} - (L_k^{(2)})_{j,j} \right|^2 
                                                      - \eta_2(k) \left[ \Im(L^{(2)}_k)_{i,i} - \Im (L^{(2)}_k)_{j,j} \right]^2 \right)\\
        & = \ \gamma     \sum_{k=1}^{\ell_1} \left( \Re^2\left[ (L_k^{(1)})_{i,i} - (L_k^{(1)})_{j,j} \right]
                                                    + (1 - \eta_1(k)) \Im^2 \left[ (L^{(1)}_k)_{i,i} - (L^{(1)}_k)_{j,j} \right] \right)\\
  & \quad   \ + \gamma^2 \sum_{k=1}^{\ell_2} \left( \Re^2\left[ (L_k^{(2)})_{i,i} - (L_k^{(2)})_{j,j} \right]
                                                    + (1 - \eta_2(k)) \Im^2\left[ (L^{(2)}_k)_{i,i} - (L^{(2)}_k)_{j,j} \right] \right).
\end{align*}
This is the required result and we have finished proving claim 2).

	\medskip
	
	3) By virtue of claim 2), the diagonal coefficient $\left[ T^\gamma_{0,\cdot} \right]_{i,j}$ never vanishes and therefore $\left( T^\gamma_{0,s} \right)^{-1}$ exists for all $s \geq 0$.  By uniqueness of the solution to Eq.~\eqref{eq:def_T} we obtain the semigroup property $T_{s,t}^\gamma=T_{0,t}^\gamma\circ(T_{0,s}^\gamma)^{-1}$ for any $0\le s \le t$. 
	
	\medskip
	
	4) For Eq.~\eqref{eq:rho_T_perturbation}, we proceed by differentiating the RHS. For the purpose of the computation, we denote it by $\xi^\gamma$:
\begin{equation*}
\begin{split}
\xi^\gamma_t := T_{0,t}^\gamma &\left( \varrho + \int_0^t \Big[(T_{0,s}^\gamma)^{-1}\circ(\Lc^{(0)}-\imath \gamma \operatorname{ad}_{H^{(1)}}) \Big] (\rho_s^\gamma)\,  ds \right.\\
&\left. \quad
             + \int_0^t(T_{0,s}^\gamma)^{-1}\Big(\sigma^{(0)} (\rho^\gamma_s, \rho^\gamma_s)  \cdot dW_s^0\Big)\;  \right) \ . 
             \end{split}
\end{equation*}
	Since in the definition of $T^\gamma$ only the Brownian motions $W^1$ and $W^2$ are involved,  $T^\gamma_{0,\cdot}$ and $W^0$ have zero quadratic variation, there is no cross-term upon differentiating the semimartingales:
\begin{align*}
    d\xi^\gamma_t
= &	\ \left[dT^\gamma_{0,t} \circ \left( T^\gamma_{0,t} \right)^{-1}\right] \left( \xi^\gamma_t \right)
  + T^\gamma_{0,t} \left(  d \Big[ \varrho + \int_0^t \left[ (T_{0,s}^\gamma)^{-1}\circ(\Lc^{(0)} -\imath \gamma\operatorname{ad}_{H^{(1)}}) \right] (\rho_s^\gamma) ds \right.\\
  & \left. \qquad \qquad\qquad\qquad \qquad\qquad\qquad+ \int_0^t (T_{0,s}^\gamma)^{-1} \big(\sigma^{(0)} (\rho^\gamma_s, \rho_s^\gamma) \cdot dW_s^0 \big)\Big] \; \right)\\
= & \ \left[dT^\gamma_{0,t} \circ \left( T^\gamma_{0,t} \right)^{-1}\right] \left( \xi^\gamma_t \right)
  + T^\gamma_{0,t} \Big( \Big[(T_{0,t}^\gamma)^{-1} \circ (\Lc^{(0)}-\imath \gamma \operatorname{ad}_{H^{(1)}}) \Big] (\rho_t^\gamma) dt \\
  & \qquad \qquad \qquad \qquad\qquad \qquad\qquad+ (T_{0,t}^\gamma)^{-1} \big(\sigma^{(0)} (\rho^\gamma_t, \rho_t^\gamma) \cdot dW_t^0 \big) \Big)\\
= & \ \left[dT^\gamma_{0,t} \circ \left( T^\gamma_{0,t} \right)^{-1}\right] \left( \xi^\gamma_t \right)
  + (\Lc^{(0)}-i \gamma\operatorname{ad}_{H^{(1)}})(\rho_t^\gamma) dt 
  + \sigma^{(0)} (\rho^\gamma_t, \rho_t^\gamma)\cdot dW_t^0 \ .
\end{align*}

Using the convention from Eq.~\eqref{eq:def_sigma_two} and upon plugging in the expression $dT^\gamma_{0,t} \circ \left( T^\gamma_{0,t} \right)^{-1}$ from Eq.~\eqref{eq:def_T}, we find:
\begin{align*}
	d\xi^\gamma_t
= & \ (\gamma\Lc^{(1)}_{\rm diss}+\gamma^2 \Lc^{(2)} )(\xi^\gamma_t) dt + (\Lc^{(0)} -i\gamma\operatorname{ad}_{H^{(1)}})(\rho_t^\gamma)dt\\
  & \quad + \sigma^{(0)}(\rho^\gamma_t, \rho_t^\gamma) \cdot dW_t^0 
          + \gamma^{\frac12}\sigma^{(1)} (\rho^\gamma_t, \xi^\gamma_t) \cdot dW^{1}_t 
          + \gamma\sigma^{(2)} (\rho^\gamma_t, \xi^\gamma_t) \cdot dW^{2}_t \ .
\end{align*}
Notice this is not the same equation as the SDE \eqref{eq:SDE}. Nevertheless, the coefficients are clearly Lipschitz in $\xi^\gamma$, as we consider $\rho^\gamma$ as an autonomous term. Therefore, the above equation enjoys the uniqueness property. Obviously $\rho^\gamma$ is a solution and therefore $\rho^\gamma = \xi^\gamma$. We have thus proven Eq.~\eqref{eq:rho_T_perturbation}.
\end{proof}

\subsection{Tightness in Meyer-Zheng topology}
\label{subsection:tightness}

Recall that the process $\rho^\gamma = \left( \rho^\gamma_t \ , \ t \geq 0 \right)$ is a process with continuous in time trajectories and that in order to prove the main Theorem \ref{thm:main}, we aim to show the convergence towards a pure jump Markov process, thus with discontinuous in time trajectories. As already argued in the introduction, this makes the usual Skorokhod topology on c\`adl\`ag paths irrelevant and it motivates the use of the Meyer-Zheng topology defined in Definition \ref{def:MZtopology}. 

Consider the subspace $\D(\R_+ ; E) \subset {\mathbb L}^0 (\R_+ ; E)$ of $E$-valued c\`adl\`ag paths, where both $\rho^\gamma$ and $\xb$ take their values. More specifically $\rho^\gamma \in \D(\R_+ ;  \Sc)$ and $\xb \in \D \left(\R_+ ;  (e_i)_{i=1}^d \right)$. Since $({\mathbb L}^0, {\rm d})$ is a Polish space, by Prokhorov's theorem, the relative compactness of a sequence of probability measures on $({\mathbb L}^0, {\rm d})$ is reduced to the tightness property of this sequence. However the space $\left( \D(\R_+ ; E), {\rm d} \right)$ is not Polish (see Appendix of \cite{MZ} for a proof) so that the relative compactness in this space requires a specific analysis. The latter has been performed in \cite{MZ}.

\medskip

The main result of this subsection takes us closer to the weak convergence of $\rho^\gamma$ to $\xb$ by stating that:

\begin{proposition}
\label{proposition:tightness}
The family of the laws of $\left( \rho^\gamma \ ; \ \gamma > 1 \right)$ is sequentially compact in the space of probability measures on $({\mathbb L}^0, {\rm d})$. Moreover any limiting subsequence is the law of a process valued in diagonal matrices in the pointer basis and with c\`adl\`ag trajectories.
\end{proposition}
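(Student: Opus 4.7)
The strategy is to apply the Meyer--Zheng tightness criterion \cite{MZ} coordinate by coordinate in the pointer basis $(E_{i,j})_{i,j=1}^d$, treating the off--diagonal entries $\rho^\gamma_{i,j}$ ($i\ne j$) and the diagonal entries $\rho^\gamma_{i,i}$ separately. The key input will be the upcoming Lemma~\ref{lemma:lyapounov}, which I will assume to yield a Lyapunov bound of the form
\begin{equation*}
\forall\, T>0,\ \forall i\ne j, \quad \sup_{\gamma>1}\; \gamma^{2}\, \E\!\left[\int_0^T |\rho^\gamma_{i,j}(s)|^2\, ds\right] <\infty.
\end{equation*}

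The first step is to show that off--diagonal entries vanish in the Meyer--Zheng topology. Since $\rho^\gamma\in\Sc$ gives $|\rho^\gamma_{i,j}|\le 1$, splitting the integral $\int_0^\infty=\int_0^T+\int_T^\infty$ and optimizing in $T$ (e.g.\ $T=2\log\gamma$), the Lyapunov bound transfers into $\E[\int_0^\infty |\rho^\gamma_{i,j}|^2\, e^{-s} ds]=O(\gamma^{-2})$. A Markov inequality then gives, for every $\varepsilon>0$,
\begin{equation*}
\E\!\left[\lambda\!\left(\{s\in\R_+:|\rho^\gamma_{i,j}(s)|\ge \varepsilon\}\right)\right] \le \frac{1}{\varepsilon^2}\,\E\!\left[\int_0^\infty |\rho^\gamma_{i,j}|^2 e^{-s} ds\right] \xrightarrow[\gamma\to\infty]{} 0.
\end{equation*}
By the equivalence given in Definition~\ref{def:MZtopology}, this means $\rho^\gamma_{i,j}\to 0$ in $(\dbL^0,{\rm d})$ in probability, in particular tightly, with trivial limit.

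The second step is to apply the Meyer--Zheng tightness criterion to each diagonal coordinate $\rho^\gamma_{i,i}$. Since $\rho^\gamma$ stays in the compact set $\Sc$, the uniform integrability condition is automatic; only the conditional variation needs to be controlled, and via the Doob--Meyer decomposition it is bounded by $\E[\int_0^T |(\Lc_\gamma \rho^\gamma_s)_{i,i}|\, ds]$. The crucial observation is that, under the QND Assumption~\ref{ass:(Nd)}, Lemma~\ref{lemma:lindblad_structure} forces $\Pi\Lc^{(2)}=0$ as well as $(\Lc^{(1)}_{\mathrm{diss}}(\rho))_{i,i}=0$, so
\begin{equation*}
(\Lc_\gamma(\rho))_{i,i}\; =\; (\Lc^{(0)}(\rho))_{i,i}\; -\; \imath\gamma\, [H^{(1)},\rho]_{i,i}.
\end{equation*}
The $\gamma^2$ contribution has disappeared, the $\gamma^0$ term is uniformly bounded by compactness of $\Sc$, and because $[H^{(1)},\rho]_{i,i}=\sum_{k\ne i}(H^{(1)}_{i,k}\rho_{k,i}-\rho_{i,k}H^{(1)}_{k,i})$, the remaining $\gamma$ term involves only off--diagonal entries of $\rho$. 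Combining Cauchy--Schwarz with the Lyapunov bound yields
\begin{equation*}
\gamma\, \E\!\left[\int_0^T |[H^{(1)},\rho^\gamma_s]_{i,i}|\, ds\right] \le 2\gamma\|H^{(1)}\|\sum_{k\ne i}\sqrt{T}\,\sqrt{\E\!\left[\int_0^T |\rho^\gamma_{i,k}|^2 ds\right]}\; =\; O(\sqrt{T}),
\end{equation*}
uniformly in $\gamma$. Meyer--Zheng's theorem then gives tightness of $(\rho^\gamma_{i,i})_\gamma$ with all limit points admitting a c\`adl\`ag version.

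Joint tightness in $(\dbL^0(\R_+, M_d(\C)),{\rm d})$ follows from marginal tightness since all norms are equivalent on the finite--dimensional space $M_d(\C)$. Along any convergent subsequence, off--diagonal entries vanish a.e.\ by the first step and diagonal entries admit c\`adl\`ag versions by the second, so the joint limit is a c\`adl\`ag process valued in diagonal matrices in the pointer basis, as claimed. The single nontrivial analytic input is the Lyapunov bound of Lemma~\ref{lemma:lyapounov} -- this is the main obstacle; once it is available, the combinatorial identities $\Pi\Lc^{(2)}=0$ and $\Pi\Lc^{(1)}_{\mathrm{diss}}=0$ produced by the QND structure are what rescue the diagonal argument from the a priori blow--up of the drift at scale $\gamma^2$.
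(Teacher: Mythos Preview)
Your overall architecture matches the paper's: show $\Pi_\perp\rho^\gamma\to 0$ in probability in $(\dbL^0,{\rm d})$, then control the conditional variation of $\Pi\rho^\gamma$ via the Meyer--Zheng criterion, using the QND identities $\Pi\Lc^{(2)}=0$ and $\Pi\Lc^{(1)}\Pi=0$ to kill the dangerous $\gamma^2$ and tame the $\gamma$ contribution in the drift. That part is right on target.

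There is, however, a genuine gap. You have guessed the content of Lemma~\ref{lemma:lyapounov} incorrectly: it is \emph{not} a direct Lyapunov bound of the form $\sup_\gamma \gamma^2\,\E\!\int_0^T|\rho^\gamma_{i,j}|^2\,ds<\infty$. In the paper it is a technical estimate on stochastic integrals against Dol\'eans--Dade exponentials, namely
\[
\sup_{t\ge 0}\;\E\!\left|\,Z_t^\gamma\!\int_0^t e^{-\frac12(t-s)D^\gamma}(Z_s^\gamma)^{-1}b_s^\gamma\,dW_s\right|\;\le\;\frac{C}{\gamma},
\]
and it is used, via the trajectorial decomposition of $\rho^\gamma$ through the stochastic flow $T^\gamma_{0,\cdot}$, to obtain the \emph{$L^1$} bound
\[
\E\big(\|\Pi_\perp\rho_t^\gamma\|\big)\;\le\;C\big(e^{-g\gamma^2 t}+\tfrac{1}{\gamma}\big).
\]
This is only an $O(1/\gamma)$ estimate in $L^1$; your assumed $\gamma^2$-weighted $L^2$ bound is strictly stronger and is neither stated nor proved anywhere in the paper, and it is not clear it holds. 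This matters precisely at your Cauchy--Schwarz step: to make $\gamma\sqrt{T}\sqrt{\E\!\int_0^T|\rho^\gamma_{i,k}|^2\,ds}$ bounded you would indeed need the $\gamma^2$-$L^2$ bound.

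The fix is to bypass Cauchy--Schwarz entirely. Since $[H^{(1)},\rho]_{i,i}$ is linear in the off-diagonal entries, bound directly
\[
\gamma\,\E\!\int_0^T\big|[H^{(1)},\rho_s^\gamma]_{i,i}\big|\,ds
\;\le\; 2\gamma\|H^{(1)}\|\sum_{k\ne i}\int_0^T \E|\rho^\gamma_{i,k}(s)|\,ds
\;\le\; C\gamma\!\int_0^T\!\big(e^{-g\gamma^2 s}+\tfrac{1}{\gamma}\big)ds
\;\le\; C\big(\tfrac{1}{g\gamma}+T\big),
\]
which is exactly how the paper closes the argument for $V_\tau(\Pi\rho^\gamma)$. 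Your Step~1 also goes through with this $L^1$ bound (you only need convergence to zero, not a rate). So the plan is sound once you replace your hypothetical $L^2$ Lyapunov input by the $L^1$ decoherence estimate the paper actually provides.
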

\begin{proof}[Strategy of proof]
The proof is subdivided into two steps:
\begin{itemize}
\item Off-diagonal terms $\Pi_{\perp} (\rho^\gamma)$ converge to zero in probability for the Meyer-Zheng topology, which is the content of the upcoming Lemma \ref{lemma:tightness_offdiag}.
\item The diagonal terms $\left( \Pi (\rho^\gamma) \right)_{\gamma>0}$ form a sequentially compact family using a criterion by Meyer-Zheng, which is the content of the upcoming Lemma \ref{lemma:tightness_diag}. 
\end{itemize}
By putting these together, the pair $\left(\Pi_\perp (\rho^\gamma), \Pi (\rho^\gamma)\right)$ forms a tight family indexed by $\gamma>1$, simply because the product of two compact spaces is compact. Therefore the law of $\rho^\gamma = \Pi_\perp (\rho^\gamma) + \Pi (\rho^\gamma)$ is tight. 

Now considering possible accumulation points of the law of $\rho^\gamma$ for $\gamma \rightarrow \infty$, since $\Pi_\perp\rho^\gamma$ accumulates necessarily to zero, by Slutsky's Lemma, any limit of a convergent subsequence will be the law of a process valued in $M_d(\C)$, identically vanishing outside diagonal matrices.
\end{proof}

\begin{lemma}
\label{lemma:tightness_offdiag}
$$ \forall \varepsilon >0, \ 
   \lim_{\gamma \rightarrow \infty}
   \P\left( {\rm d} \left( \Pi_\perp (\rho^\gamma), 0 \right) \ge \varepsilon \right) = 0 \ .
$$
\end{lemma}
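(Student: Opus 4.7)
\medskip

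\textbf{Plan for Lemma \ref{lemma:tightness_offdiag}.} The plan is to reduce the convergence in probability to a first-moment estimate via Markov's inequality, and then to invoke the upcoming Lemma \ref{lemma:lyapounov}. Using the explicit form of the Meyer-Zheng distance in Eq.~\eqref{eq:distanced} together with Tonelli's theorem, we have
\[
\mathbb{P}\big(\mathrm{d}(\Pi_\perp(\rho^\gamma),0)\ge\varepsilon\big)
\;\le\;\frac{1}{\varepsilon}\,\mathbb{E}\big[\mathrm{d}(\Pi_\perp(\rho^\gamma),0)\big]
\;=\;\frac{1}{\varepsilon}\int_0^\infty \mathbb{E}\big[1\wedge\|\Pi_\perp(\rho_t^\gamma)\|\big]\,e^{-t}\,dt.
\]
Hence it is enough to show that the integral on the right-hand side tends to $0$ as $\gamma\to\infty$.

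\medskip

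The key input for controlling the integrand is Lemma \ref{lemma:lyapounov}, which will provide a Lyapunov-type bound of the form
\[
\mathbb{E}\big[\|\Pi_\perp(\rho_t^\gamma)\|^2\big]\;\le\; f(\gamma,t),
\]
with $f(\gamma,t)$ vanishing as $\gamma\to\infty$ for every fixed $t>0$. Combined with the trivial deterministic bound $\|\Pi_\perp(\rho_t^\gamma)\|\le 2$ (valid since $\rho_t^\gamma\in\Sc$), Cauchy--Schwarz yields $\mathbb{E}[1\wedge\|\Pi_\perp(\rho_t^\gamma)\|]\le \min\big(2,\sqrt{f(\gamma,t)}\big)$. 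The integrand is then bounded by $2 e^{-t}$ (integrable in $t$) and converges pointwise in $t>0$ to $0$, so dominated convergence provides the vanishing of the integral, concluding the proof.

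\medskip

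The main obstacle is therefore the upcoming Lemma \ref{lemma:lyapounov} itself. The intuition behind it is transparent: the dominant operator $\gamma^2\Lc^{(2)}$ combined with the quadratic variation of the noise $\gamma\sigma^{(2)}$ should contract the off-diagonal part at rate of order $\gamma^2$, because under the QND Assumption \ref{ass:(Nd)} and the Identifiability Assumption \ref{ass:(Id)} the constants $D_{i,j}^{2,\gamma}/\gamma^2$ appearing in Proposition \ref{proposition:traj_decomposition} are strictly positive for $i\ne j$. The technical difficulty is that in applying It\^o's formula to the Lyapunov function $V(\rho):=\|\Pi_\perp\rho\|^2$, one must carefully track all cross terms produced by the lower-order Lindbladians $\Lc^{(0)}$ and $\gamma\Lc^{(1)}$ (in particular, the off-diagonal Hamiltonian contribution $-i\gamma\,\operatorname{ad}_{H^{(1)}}$, which has a factor $\gamma$ and could a priori compete with the contraction) and show that they are uniformly absorbable via Gronwall's inequality by the $-c\gamma^2 V$ drift. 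The alternative, building on Proposition \ref{proposition:traj_decomposition}, would be to exploit the factorization $[T_{0,t}^\gamma]_{i,j}=e^{-\frac{t}{2}D_{i,j}^\gamma+i\theta_{i,j}^\gamma(t)}\,\mathcal{E}(\cdot)_t$ and the fact that the Dol\'eans--Dade exponential is a positive supermartingale to estimate each summand in Eq.~\eqref{eq:rho_T_perturbation} directly; either route leads to the same bound.
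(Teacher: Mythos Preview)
Your reduction via Markov's inequality and dominated convergence is correct and matches the paper's opening move exactly. The issue is everything after that.

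\textbf{You have misread what Lemma~\ref{lemma:lyapounov} actually says.} It is \emph{not} a Lyapunov bound of the form $\E[\|\Pi_\perp\rho_t^\gamma\|^2]\le f(\gamma,t)$. In the paper, Lemma~\ref{lemma:lyapounov} is a technical estimate on a one-dimensional stochastic integral of the form
\[
\sup_{t\ge 0}\ \E\Big(\Big| Z_t^\gamma \int_0^t e^{-\frac12(t-s)D^\gamma}(Z_s^\gamma)^{-1}b_s^\gamma\,dW_s\Big|\Big)\le \frac{C}{\gamma},
\]
with $Z^\gamma$ a Dol\'eans--Dade exponential driven by a Brownian motion \emph{independent} of $W$. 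It arises only after one has already applied the trajectorial decomposition of Proposition~\ref{proposition:traj_decomposition} and isolated the stochastic-integral summand in Eq.~\eqref{eq:rho_T_perturbation}. So the lemma is an ingredient in the ``alternative'' route you mention, not a substitute for it.

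\textbf{Your primary route (It\^o on $V(\rho)=\|\Pi_\perp\rho\|^2$) has a genuine obstacle.} At order $\gamma^2$, the drift from $\Lc^{(2)}$ contributes $2\Re(\tau_{i,j})|\rho_{i,j}|^2=-\sum_k|(L_k^{(2)})_{ii}-(L_k^{(2)})_{jj}|^2\,|\rho_{i,j}|^2$, but the quadratic variation of $\gamma\sigma^{(2)}$ contributes $+\sum_k\eta_2(k)\big|(L_k^{(2)})_{ii}+\overline{(L_k^{(2)})_{jj}}-R_k(\rho)\big|^2\,|\rho_{i,j}|^2$ with $R_k(\rho)=\tr[(L_k^{(2)*}+L_k^{(2)})\rho]$. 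The net sign of this $\gamma^2$-coefficient is \emph{not} negative in general (take $\eta_2(k)=1$, real diagonal entries, and $\rho$ concentrated near a pointer state with a different eigenvalue), so you do not get a clean $-c\gamma^2 V$ drift to feed into Gr\"onwall. This is precisely why the paper does \emph{not} proceed this way.

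\textbf{What the paper actually does} is your ``alternative'': it uses Eq.~\eqref{eq:rho_T_perturbation} to write $\Pi_\perp\rho_t^\gamma$ as three pieces. The first two are handled directly from Eq.~\eqref{eq:stoch_off_diag_T_decay}, which follows because $\big|[T^\gamma_{s,t}]_{i,j}\big|=e^{-\frac12(t-s)D_{i,j}^\gamma}\cdot\Ec(\cdots)$ and the Dol\'eans--Dade factor has expectation $1$ (not merely $\le 1$). The third piece---the stochastic integral against $dW^0$---cannot be handled by the supermartingale property alone, because $T^\gamma_{s,t}$ and $W^0$ are not independent of each other through $\rho^\gamma$, and one needs a uniform-in-$t$ bound. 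This is where the actual Lemma~\ref{lemma:lyapounov} enters: after reducing to scalar coordinates, a Girsanov change of measure plus BDG plus a genuine Gr\"onwall/Lyapunov argument on an auxiliary process gives the required $C/\gamma$ bound. The resulting first-moment estimate is Eq.~\eqref{eq:offdiag_L1_sqrtgamma}, namely $\E\|\Pi_\perp\rho_t^\gamma\|\le C(e^{-g\gamma^2 t}+\gamma^{-1})$, which is sharper and more useful than a bare $o(1)$ (it is reused in Lemma~\ref{lemma:tightness_diag}).

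In short: keep your first paragraph, discard the direct-Lyapunov plan, and recognize that the decomposition route is the real proof and that Lemma~\ref{lemma:lyapounov} is the hard step \emph{within} it, not a shortcut around it.
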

\begin{proof}
Recalling that 
$$ {\rm d} \left( \Pi_\perp (\rho^\gamma), 0 \right)
   = 
   \int_0^\infty \Big\{ 1 \wedge \left\| \Pi_\perp (\rho^\gamma)  \right\|\Big\} \ e^{-t} dt\,.
$$
We only need to prove, by Markov's inequality:
$$
   \lim_{\gamma \rightarrow \infty} \int_0^\infty \E \left\| \Pi_\perp (\rho^\gamma) \right\| e^{-t} \ dt = 0 \ .
$$
In turn, this is implied by the fact that there exist $C>0$ and $g>0$ such that for any $t\in \R_+$ and $\gamma>1$,
\begin{align}
\label{eq:offdiag_L1_sqrtgamma}
\E(\|\Pi_\perp (\rho_t^\gamma) \|)\leq & \ C\left(e^{-g\gamma^2 t} +\frac1{\gamma}\right).
\end{align}
which we shall prove under the identifiability Assumption \ref{ass:(Id)} and $\Pi\Lc^{(1)}\Pi=\Pi\Lc^{(2)}=\Lc^{(2)}\Pi=0$.

The proof of Eq.~\eqref{eq:offdiag_L1_sqrtgamma} is the core of the lemma, and will be loosely referred to as decoherence. It is decomposed into three steps.

\medskip

{\bf Step 1: Controlling off-diagonal terms in the flow $T_{0,t}^\gamma$:}
Let's prove that under our working assumptions there exist $C>0$ and $g>0$ such that for any $t\geq s>0$ and $\gamma>0$,
\begin{align}
\label{eq:stoch_off_diag_T_decay}
\E(\|T_{s,t}^\gamma \circ \Pi_\perp\|) & = \E(\|\Pi_\perp \circ T_{s,t}^\gamma\|)\leq Ce^{-g\gamma^2 (t-s)}.
\end{align}

Eq.~\eqref{eq:T_is_diagonal} and item 3) of Proposition \ref{proposition:traj_decomposition} imply that $T_{s,t}^\gamma \in {\rm{End}} (M_d (\C))$ is diagonal in the basis $(E_{i,j})_{i,j=1}^d$ of $M_d (\C)$. The expression of $\Pi$ given in Eq.~\eqref{eq:defiinitionPi} then implies first that $T_{s,t}^\gamma$ commute with $\Pi$, and thus with $\Pi_\perp = \id -\Pi$,  and secondly that the  $1$-norm of $T_{s,t}^\gamma \circ \Pi_{\perp}$ w.r.t. this basis  is given by 
$$ 
\sum_{\substack{1 \leq i, j \leq d \\ i \neq j}} \left\vert  \left[T_{s,t}^\gamma \right]_{i,j}\right\vert \ .
$$
Hence, by equivalence of the norms in finite dimension, it is sufficient to bound the expectation $\E \left(\left\vert  \left[T_{s,t}^\gamma \right]_{i,j}\right\vert \right)$ for an arbitrary pair of distinct $(i,j)\in\{1,\dots,d\}^2$. From Eq.~\eqref{eq:expression_T} and item 3) of Proposition \ref{proposition:traj_decomposition}, 
	\begin{align*}
	\left\vert  \left[T_{s,t}^\gamma \right]_{i,j}\right\vert = \ e^{ -\frac {(t-s)}{2} D^\gamma_{i,j}  }
 \ \Ec\left( \sum_{\alpha=1}^2 \int_s^t \gamma^{{\alpha}/{2}} \, a_{i,j}^{\alpha,\gamma} (u) \cdot dW_u^\alpha \right) 	\end{align*}
	 Then the identifiability Assumption \ref{ass:(Id)} and Eq.~\eqref{eq:exprDij} implies $\min_{i\neq j}D^\gamma_{i,j} \ge c \gamma^2 $ for some $c>0$. Hence, recalling that Dol\'eans--Dade martingales $\Ec$ have expectation $1$, Eq.~\eqref{eq:stoch_off_diag_T_decay} is proven.

\medskip

{\bf Step 2: Bootstrapping decoherence of $T^\gamma$ to $\rho^\gamma$, a beginning.}

	From Eq.~\eqref{eq:rho_T_perturbation} and the triangular inequality, we obtain three terms:
	\begin{align}
	\label{eq:bound_off_diag}
	       \ \E(\|\Pi_\perp \left(\rho_t^\gamma \right)\|) 
	\leq & \ \quad \E\left(\left\| [\Pi_\perp \circ  T_{0,t}^\gamma ] (\varrho)\right\|\right)\\
	     & \ \quad + \int_0^t \E\left( \left\| [\Pi_\perp \circ T_{s,t}^\gamma\circ(\Lc^{(0)}-\imath \gamma \operatorname{ad}_{H^{(1)}}) ]  (\rho_s^\gamma)\right\|\right)ds \nonumber \\
	     &   \quad + \E\left(\left\|\int_0^t [\Pi_\perp \circ T_{s,t}^\gamma] \left( \sigma^{(0)} \, (\rho_s^\gamma) \cdot dW_s^0 \right) \right\|\right) \ . \nonumber
	\end{align}
	The main goal of this Lemma which is Eq.~\eqref{eq:offdiag_L1_sqrtgamma}, will be obtained by controlling the three previous terms. We begin by controlling the two first terms thanks to Step 1, and more specifically Eq.~\eqref{eq:stoch_off_diag_T_decay}. Since $\rho_t^\gamma\in\mathcal S$ almost surely for any $t\in \R_+$ and $\gamma>0$, 
	$$ \|\varrho\|\leq\sup_{t\in\R_+, \gamma>0} \|\rho_t^\gamma\|\leq 1 $$
	almost surely. Then, using Eq.~\eqref{eq:stoch_off_diag_T_decay}, there exists $C>0$ and $g>0$ such that for any $\gamma>0$ and $t\in \R_+$, the first term of the sum in the right hand side of inequality \eqref{eq:bound_off_diag} is upper bounded as
	$$\E\left(\left\| [\Pi_\perp \circ T_{0,t}^\gamma] (\varrho)\right\|\right)\leq C e^{-g\gamma^2 t}.$$
	Since both $\Lc^{(0)}$ and $\operatorname{ad}_{H^{(1)}}$ are bounded, using Eq.~\eqref{eq:stoch_off_diag_T_decay} and integrating the exponential shows that there exists $C>0$ such that for any $t\in\R_+$ and $\gamma>1$, the second term in the sum on the right hand side of inequality \eqref{eq:bound_off_diag} is upper bounded as
	$$\int_0^t\E\left(\left\| [ \Pi_\perp  \circ T_{s,t}^\gamma\circ(\Lc^{(0)}-\imath \gamma \operatorname{ad}_{H^{(1)}}) ] (\rho_s^\gamma) \right\|\right)ds\leq \frac{C}{\gamma} \ .$$
	
	All that remains is to deal with the last term by proving that:
\begin{align}
\label{eq:decoherence_term_3}
\exists \ C>0, \quad 
\E\left(\left\|\int_0^t  [\Pi_\perp \circ  T_{s,t}^\gamma] \left( \sigma^{(0)} \ (\rho_s^\gamma) \cdot dW_s^0 \right) \right\|\right)
\leq
\
\frac{C}{\gamma}
 ,
\end{align}
which is the most delicate. Notice that the estimate needs to be uniform in $t \in \R_+$.
	
\medskip

{\bf Step 3: Proof of Eq.~\eqref{eq:decoherence_term_3} and conclusion.}
    We start by a sequence of reductions. First, by equivalence of the norms, it is sufficient to prove that for any couple of distinct $i,j$ in $\{1,\dotsc,d\}$, there exists $C>0$ such that for any $\gamma>0$ and $t\in \R_+$,
\begin{align*}
\E\left(\left|\left\langle E_{i,j}, \int_0^t  [T_{0,t}^\gamma \circ  (T_{0,s}^\gamma)^{-1}]  \left( \sigma^{(0)} (\rho_s^\gamma) \cdot dW_s^0 \right) \right\rangle\right|\right)\leq\frac{C}{\gamma} \ .
\end{align*}

Second, we reduce further by applying the triangular inequality and separating the different components of
$$ \sigma^{(0)} (\rho_s^\gamma) \cdot dW_s^0
   =
   \sum_{k=1}^{\ell_0} \sigma^{(0)}_k (\rho_s^\gamma)  \ dW_s^{0, k} \ ,
$$
which are made explicit in Eq.~\eqref{eq:def_sigma}. In the end, we only need to prove that for a one-dimensional Brownian motion $W$ and a matrix process $c^\gamma$, uniformly bounded in $\gamma$, the following holds for $i \neq j$:
\begin{align*}
  & \ \E\left(\left|\left\langle E_{i,j}, \int_0^t  [T_{0,t}^\gamma \circ (T_{0,s}^\gamma)^{-1}]( c^\gamma_s ) \ dW_s \right\rangle\right|\right)\\
= & \ \E\left(\left| [T_{0,t}^\gamma]_{i,j} \int_0^t ([T_{0,s}^\gamma]_{i,j})^{-1} [c^\gamma_s]_{i,j} \ dW_s \right|\right)\\
\leq & \ \frac{C}{\gamma} \ .
\end{align*}

Third, recalling the expression \eqref{eq:expression_T} of $[T_{0,t}^\gamma]_{i,j}$ from Proposition \ref{proposition:traj_decomposition}, by packaging all the stochastic integrals in the Doléans--Dade exponential into a single one, we can write:
$$ [T_{0,t}^\gamma]_{i,j} = e^{-\frac{t}{2} D^{\gamma}_{i,j} + \imath \theta_{i,j}^\gamma(t) }
   \Ec\left( \gamma \int_0^t a^\gamma_s dB_s \right) \ ,
$$
for a certain Brownian motion $B$ independent of $W$, and a real process $a^\gamma$, uniformly bounded in $\gamma$.

All in all, absorbing the phase $e^{\imath \theta_{i,j}^\gamma(t)}$ in $[c^\gamma_s]_{i,j}$ and considering separately real and imaginary parts, we need to prove 
\begin{lemma}
\label{lemma:lyapounov}
Let $W$ and $B$ be two independent Brownian motions. For any real processes $a^\gamma$ and $b^\gamma$, universally bounded:
$$ \sup_{\gamma > 1} \left( \|a^\gamma\|_\infty + \|b^\gamma\|_\infty \right)
   < \infty
$$
and any constants $D^\gamma$ such that  
\begin{equation}
\label{eq:lowerboundD}
\inf_{\gamma >0} \; \gamma^{-2} D^\gamma >0 \ ,
\end{equation} 
we have:
\begin{align*}
       \ \sup_{t \in \R_+} \E\left(\left| Z_t^\gamma \int_0^t e^{-\half (t-s) D^\gamma }(Z_s^\gamma)^{-1} b^\gamma_s \ dW_s \right|\right)
\leq & \ \frac{C}{\gamma} \ ,
\end{align*}
where 
$$ Z_t^\gamma := \Ec\left( \gamma \int_0^t a^\gamma_s dB_s \right) \ .$$
\end{lemma}
For improved readability, the proof of this lemma is given separately in the next subsection, and we are formally done with the proof of Lemma \ref{lemma:tightness_offdiag}.
\end{proof}

We go on with the proof of tightness of the process corresponding to the diagonal elements of $\rho^\gamma$, in the Meyer-Zheng topology. To that endeavor, let us recall a convenient tightness criterion.

\begin{thm} \cite[Theorem 4]{MZ}
\label{thm:meyer_zheng}
Let $E$ be a Euclidean space. If $X$ is a $E$-valued stochastic process with natural filtration $\left( \Fc_t \ ; \ t \geq 0 \right)$, then for any $\tau \in \R_+$, its conditional variation on $[0,\tau]$ is defined as:
\begin{align}
\label{def:conditional_variation}
V_\tau(X) := & \sup_{0=t_0<t_1<\dotsb<t_k=\tau}\; 
               \sum_{i=0}^{k-1}\E \Big(\|\E(X_{t_{i+1}}-X_{t_{i}}|\Fc_{t_{i}})\| \Big) \ .
\end{align}

Consider an index set $I$ and a family $\left( X^{(n)} \ ; \ n \in I\right)$ of processes living in $\D({\R}_+ ; E)$ which satisfy
$$ \sup_{n} \left[ V_\tau(X^{(n)}) +  \E\left[ \sup_{0 \leq t \leq \tau} X_t^{(n)} \right] \right] < \infty \ ,$$
for all $\tau>0$. Then the family of laws of the $X^{(n)}$'s is tight for the Meyer-Zheng topology and all limiting points are supported in $\D({\mathbb R}_+ ; E)$.
\end{thm}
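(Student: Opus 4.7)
The plan relies on identifying the Meyer-Zheng topology with weak convergence of pseudo-paths, and then using Rao's decomposition theorem to handle càdlàg regularity.

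First I would reformulate tightness in terms of pseudo-paths. To each $w \in \D(\R_+; E)$ one associates the probability measure $\mu_w(dt, dx) := e^{-t}\, dt \otimes \delta_{w(t)}(dx)$ on $\R_+ \times E$. Definition \ref{def:MZtopology} says convergence in ${\rm d}$ is equivalent to weak convergence of pseudo-paths tested against bounded continuous functions. Hence tightness of the laws of $X^{(n)}$ in $({\mathbb L}^0, {\rm d})$ reduces to tightness of the laws of the random measures $\mu_{X^{(n)}}$ in the Polish space of probability measures on $\R_+ \times E$. By Prokhorov's theorem applied twice, this boils down to tightness of the deterministic expected measure $\E[\mu_{X^{(n)}}]$ on $\R_+ \times E$. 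The time marginal is fixed to $e^{-t}\, dt$, so the exponential decay in $t$ handles escape to infinity in time, and tightness of the $E$-marginal on each slab $[0, \tau]$ follows readily from $\sup_n \E[\sup_{t \leq \tau} \| X^{(n)}_t \|] < \infty$ via Markov's inequality.

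Second, to ensure that any accumulation point $X^\infty$ lies in $\D(\R_+; E)$, I would exploit the quasi-martingale structure. The hypothesis $V_\tau(X^{(n)}) < \infty$ is precisely Fisk's definition of a quasi-martingale on $[0, \tau]$. By Rao's decomposition theorem, each $X^{(n)}$ restricts on $[0, \tau]$ to a difference $X^{(n)} = Y^{(n)} - Z^{(n)}$ of two non-negative supermartingales whose $L^1$ norms are controlled by the quantity $V_\tau(X^{(n)}) + \E[\sup_{t \leq \tau} \|X^{(n)}_t\|]$. The pair $(Y^{(n)}, Z^{(n)})$ is tight by the marginal argument above, so along a subsequence both components converge in Meyer-Zheng topology to processes $Y^\infty, Z^\infty$, and $X^\infty = Y^\infty - Z^\infty$.

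The main obstacle is to verify that the limits $Y^\infty, Z^\infty$ retain the supermartingale property. The inequality $\E[Y^\infty_t \mid \mathcal{F}_s] \leq Y^\infty_s$ need only be tested along a dense set of times $s < t$, and weighed against bounded continuous functionals of the past. Invoking the convergence in Lebesgue measure of finite-dimensional distributions that holds along Meyer-Zheng convergent subsequences (cf.\ \cite[Theorem 6]{MZ}), one obtains a countable set of times of full Lebesgue measure at which all relevant finite-dimensional distributions converge, and the supermartingale inequality passes to the limit. Doob's regularization theorem then provides càdlàg modifications of $Y^\infty$ and $Z^\infty$, hence of $X^\infty$, which is therefore supported in $\D(\R_+; E)$. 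The delicate point, typical of this framework, is that $\D(\R_+; E)$ is not closed in $({\mathbb L}^0, {\rm d})$; the quasi-martingale hypothesis is exactly what compensates for this failure and forces limit points to be regular.
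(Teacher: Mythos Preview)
The paper does not prove this statement; it is quoted verbatim as \cite[Theorem 4]{MZ} and used as a black box in the proof of Lemma~\ref{lemma:tightness_diag}. So there is no ``paper's own proof'' to compare against.

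That said, your sketch is a faithful outline of the argument in the original Meyer--Zheng paper. The two-layer Prokhorov reduction (tightness of random pseudo-paths from tightness of their intensity measures), the quasi-martingale interpretation of $V_\tau(X) < \infty$, Rao's decomposition into a difference of supermartingales, and Doob regularization of the limit are exactly the ingredients Meyer and Zheng assemble. Two small points are worth tightening: for $E$-valued processes the Rao decomposition and the supermartingale-to-limit argument should be carried out coordinate by coordinate after fixing a basis; and the sentence ``Prokhorov applied twice boils down to tightness of the expected measure'' hides the standard lemma that a family of random probability measures on a Polish space is tight as soon as their mean measures are, which deserves one line of justification via Markov's inequality. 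Neither of these is a genuine gap.
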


The previous theorem is the main tool for proving:
\begin{lemma}
\label{lemma:tightness_diag}
The family of processes 
$$ \left( \Pi (\rho^\gamma) \ ; \ \gamma \geq 1 \right) $$ 
is tight in the Polish space $({\mathbb L}^0 (\R_+; {\mathcal S}), {\rm d})$ and all limiting points are supported on c\`adl\`ag paths.
\end{lemma}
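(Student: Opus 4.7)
The plan is to verify both hypotheses of the Meyer--Zheng tightness criterion (Theorem \ref{thm:meyer_zheng}) for the family $(\Pi(\rho^\gamma))_{\gamma \geq 1}$. Since $\rho^\gamma_t \in \Sc$ for all $t \geq 0$ almost surely and $\Sc$ is compact, the supremum bound $\sup_{\gamma\ge 1}\E\big[\sup_{0\le t\le \tau}\|\Pi \rho^\gamma_t\|\big]<\infty$ is immediate. The heart of the proof therefore lies in establishing a uniform (in $\gamma$) bound on the conditional variation $V_\tau(\Pi \rho^\gamma)$ defined in Eq.~\eqref{def:conditional_variation}.

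To bound $V_\tau$, I would first apply $\Pi$ to the SDE \eqref{eq:SDE}. Since $\Pi$ is deterministic and linear, it commutes with the It\^o integrals; hence
$$\Pi \rho^\gamma_t = \Pi\varrho + \int_0^t \Pi \Lc_\gamma(\rho^\gamma_s)\, ds + M^\gamma_t,$$
where $M^\gamma$ is a martingale (the integrands are bounded since $\rho^\gamma$ lives in $\Sc$). A routine application of the conditional Jensen inequality, termwise on a partition $0=t_0<t_1<\dotsb<t_k=\tau$, then yields
$$V_\tau(\Pi \rho^\gamma) \leq \int_0^\tau \E\|\Pi \Lc_\gamma(\rho^\gamma_s)\|\, ds,$$
so it suffices to control this integral uniformly in $\gamma \geq 1$.

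The key step is to exploit the QND structure to simplify this integrand. By items 2) and 3) of Lemma \ref{lemma:lindblad_structure}, the QND Assumption \ref{ass:(Nd)} implies $\Pi \Lc^{(2)} = 0$ (since $H^{(2)}$ and the $L^{(2)}_k$ are diagonal in the pointer basis) and $\Pi \Lc^{(1)} \Pi = 0$ (since the $L^{(1)}_k$ are diagonal). Decomposing $\rho^\gamma_s = \Pi\rho^\gamma_s + \Pi_\perp \rho^\gamma_s$, these cancellations leave
$$\Pi \Lc_\gamma(\rho^\gamma_s) = \Pi \Lc^{(0)}(\rho^\gamma_s) + \gamma\, \Pi \Lc^{(1)}(\Pi_\perp \rho^\gamma_s).$$
The first term is uniformly bounded by $\|\Pi \Lc^{(0)}\|$. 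For the potentially dangerous second term, the $\gamma$ prefactor is absorbed by the decoherence estimate \eqref{eq:offdiag_L1_sqrtgamma} from Lemma \ref{lemma:tightness_offdiag}: indeed $\gamma\,\E\|\Pi_\perp \rho^\gamma_s\| \leq C(\gamma e^{-g\gamma^2 s}+1)$, and since $\int_0^\tau \gamma e^{-g\gamma^2 s}\, ds \leq 1/(g\gamma)$ is uniformly bounded for $\gamma \geq 1$, one obtains $\sup_{\gamma \ge 1} V_\tau(\Pi \rho^\gamma) \leq C(1+\tau)$.

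Both hypotheses of Theorem \ref{thm:meyer_zheng} being satisfied, tightness in $(\mathbb L^0,{\rm d})$ follows, and any limit point is supported in $\D(\R_+;\Sc)$. The main obstacle is the $\gamma$ prefactor carried by $\gamma \Lc^{(1)}$ after projection onto the diagonal part; its resolution hinges on the combination of the algebraic cancellation $\Pi \Lc^{(1)} \Pi = 0$ (a direct consequence of QND) with the quantitative decoherence estimate \eqref{eq:offdiag_L1_sqrtgamma} already secured in Lemma \ref{lemma:tightness_offdiag}.
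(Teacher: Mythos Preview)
Your proof is correct and follows essentially the same route as the paper: both reduce to bounding $\int_0^\tau \E\|\Pi\Lc_\gamma(\rho^\gamma_s)\|\,ds$, use the QND cancellations $\Pi\Lc^{(2)}=0$ and $\Pi\Lc^{(1)}\Pi=0$, and then invoke the decoherence estimate \eqref{eq:offdiag_L1_sqrtgamma} to kill the $\gamma$-factor. The only cosmetic difference is that the paper obtains $V_\tau(\Pi\rho^\gamma)=\int_0^\tau \E\|\Pi\Lc_\gamma(\rho^\gamma_s)\|\,ds$ as an equality via the dual characterization from \cite[Eq.~(4),(5)]{MZ}, whereas you derive the (sufficient) inequality directly from the semimartingale decomposition and conditional Jensen.
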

\begin{proof}
Since for any $\gamma>0$ and any $t\geq 0$, $\rho_t^\gamma\in \Sc$ almost surely and $\sup_{\rho\in\Sc}\|\rho\|=1$, almost surely, $\Pi (\rho^\gamma)$ takes value in the Hilbert-Schmidt centered unit ball and therefore in a compact set. As such, only the conditional variation in Meyer-Zheng's Theorem \ref{thm:meyer_zheng} needs to be uniformly bounded on segments. For fixed $\tau>0$, we have in our case
\begin{align}
\label{eq:Vtau}
V_\tau(\Pi (\rho^\gamma)) := & \sup_{0=t_0<t_1<\dotsb<t_k=\tau}\sum_{i=0}^{k-1}\E(\|\E(\Pi (\rho^\gamma_{t_{i+1}})-\Pi (\rho^\gamma_{t_{i}}) |\Fc_{t_{i}})\|).
\end{align}
Equivalently, thanks to \cite[Eq.~(4), (5)]{MZ} and the following paragraph,
$$ V_\tau(\Pi(\rho^\gamma))
   =
   \sup_{\|\varphi\| \leq 1} \ \int_0^\tau \E \left[ \left\langle \varphi_t,\Pi ( d\rho^\gamma_t )  \right\rangle \right]$$
where the supremum is taken over the simple predictable processes taking value in the Hilbert--Schmidt centered unit ball of $M_d(\C)$. It follows from Eq.~\eqref{eq:SDE} that
$$V_\tau(\Pi ( \rho^\gamma ) )=\sup_{\Vert \varphi \Vert \le 1} \ \int_0^\tau \E\left[ \left\langle \varphi_t, [\Pi \circ  \Lc_\gamma]  (\rho^\gamma_t) \right\rangle \right]dt.$$
Since $t\mapsto [\Pi \circ \Lc_\gamma]  (\rho_t^\gamma) /\| [\Pi\circ \Lc_\gamma]  (\rho_t^\gamma)\|$ is adapted and locally square integrable, it can be approximated by simple predictable processes and we deduce that,
$$V_\tau(\Pi (\rho^\gamma))=\int_0^\tau \E(\| [\Pi \circ  \Lc_\gamma]  (\rho^\gamma_t)\|)dt.$$
By assumption $\Pi \Lc^{(2)} = \Pi \Lc^{(1)} \Pi=0$, hence by the triangular inequality,
$$ V_\tau(\Pi (\rho^\gamma) )
  \leq
  \int_0^\tau \E(\| [\Pi \circ  \Lc^{(0)}] \ (\rho^\gamma_t)\|) dt
  +
  \gamma \int_0^\tau \E \left(\left\| [\Pi \circ \Lc^{(1)} \circ \Pi_\perp ] (\rho_t^\gamma)\right\|\right) dt \ .$$
The operators $\Lc^{(0)}$ and $\Lc^{(1)}$ being bounded and $\rho_t^\gamma\in\mathcal S$ almost surely for any $\gamma>0$ and $t\in\R_+$, there exists $C>0$ such that,
\begin{align*}
    V_\tau(\Pi(\rho^\gamma))
    \leq & \ \ C \left( \tau+ \gamma \int_0^\tau \E( \|\Pi_\perp (\rho_t^\gamma)\|)  dt \right)\\
    \stackrel{\textrm{Eq.~\eqref{eq:offdiag_L1_sqrtgamma}}}{\leq} &
    C \left( \tau+ \gamma \int_0^\tau \left( e^{-g \gamma^2 t} + \frac{1}{\gamma} \right) dt \right)\\
    \leq & \ \ 
    C \left( 2 \tau + \frac{1}{\gamma g} \right) \ .
\end{align*}
This bound yields the result.
\end{proof}

\subsection{Proof of Lemma \ref{lemma:lyapounov}}
\label{subsection:gronwall}
	The proof is based on a change of measure,  Burkholder-Davis-Gundy  inequality for $p=1$ and two Gr\"onwall inequalities. We denote $\mathbb P$ the measure with respect to which $B$ and $W$ are independent multidimensional Brownian motions. Let $\F := \left( \mathcal F_t \ ; \ t \geq 0\right)$ be the natural filtration associated to the processes $W$, $B$, $a^\gamma$ and $b^\gamma$.
	
	\medskip
	
	{\bf Step 1: BDG inequalities}
	
	By assumption, for each $\gamma>1$, $Z^\gamma$ verifies Novikov's condition, therefore $Z^\gamma$ is a martingale with respect to $\mathbb P$. Let $\Q^\gamma$ be the probability measure defined from the Radon-Nikodym derivative $Z_t^\gamma = \frac{d\Q^\gamma}{d\P} {\vert_{\Fc_t}}$.
	
	By Girsanov's theorem, $W$ is still a Brownian motion with respect to $\Q^\gamma$ since it is independent of $B$ with respect to $\mathbb P$. Moreover, the boundedness of the processes $a^\gamma$ and $b^\gamma$ is unchanged under $\Q^\gamma$. In the following we denote by $\E$ the expectation with respect to $\mathbb P$ and $\E^{\mathbb Q^\gamma}$ the one with respect to $\mathbb Q^\gamma$. Using the change of measure, then the Burkholder-Davis-Gundy (BDG) inequality for $p=1$ \cite[Chapter IV, Theorem 4.1]{revuz2013continuous}, we have for some universal constant $C_1>0$:
	\begin{align*}
  & \ \E\left(\left| Z_t^\gamma \int_0^t e^{-\half (t-s) D^\gamma }(Z_s^\gamma)^{-1} b^\gamma_s \ dW_s \right|\right)\\
= & \  \E^{\Q^\gamma} \left(\left| \int_0^t e^{-\half (t-s) D^\gamma }(Z_s^\gamma)^{-1} b^\gamma_s \ dW_s \right|\right)\\
\leq & C_1
       \E^{\Q^\gamma}\left( \left| \int_0^t e^{-(t-s) D^\gamma }(Z_s^\gamma)^{-2} (b^\gamma_s)^2 \ ds \right|^\half \right) \ .
	\end{align*}
	Hence, reverting back to the expectation with respect to $\mathbb P$, we have:
	\begin{equation}
	\label{eq:previousstep}
	\begin{split}
  & \ \E\left(\left| Z_t^\gamma \int_0^t e^{-\half (t-s) D^\gamma }(Z_s^\gamma)^{-1} b^\gamma_s \ dW_s \right|\right)\\ 
\leq  & \ C_1
       \E\left( Z_t^\gamma \left| \int_0^t e^{-(t-s) D^\gamma }(Z_s^\gamma)^{-2} (b^\gamma_s)^2 \ ds \right|^\half \right) \\ 
\leq & \ C_1 \ \| b^\gamma \|_\infty \ \E\left( Z_t^\gamma \left| \int_0^t e^{-(t-s) D^\gamma }(Z_s^\gamma)^{-2} \ ds \right|^\half \right) , 
\end{split}	
	\end{equation}
	where in the last step, we invoked the fact that $b^\gamma$ is bounded.

	\medskip
	
	{\bf Step 2: Reductions.}
	
	Now, we shall reduce the problem to proving
	\begin{align}
	\label{eq:step2_reduction}
	\sup_{\substack{\gamma>1 \\ t \geq 1}}\; 
	\E\left( M_{t}^\gamma \left| \int_0^{t} e^{-(t-s) }(M_{s}^\gamma)^{-2} \ ds \right|^\half \right)
	< \infty
	\end{align}
	for all exponential martingale $M^\gamma$ in the form
	$$
	M_{t}^\gamma = \Ec\left( \int_0^{t} a_s^\gamma dB_s \right) 
	\quad
	\textrm{with }\quad 
	\sup_{\gamma > 1} \|a^\gamma\|_\infty < \infty \ .
	$$

    Starting from the last line of \eqref{eq:previousstep}, we perform a change time scale from $t$ to $D^\gamma t$:
	\begin{align*}
  & \ \E\left(\left| Z_t^\gamma \int_0^t e^{-\half (t-s) D^\gamma }(Z_s^\gamma)^{-1} b^\gamma_s \ dW_s \right|\right)\\
\leq & \ \frac{\ C_1 \ \| b^\gamma \|_\infty}{\sqrt{D^\gamma}} \; \E\left( Z_{D^\gamma t / D^\gamma}^\gamma \left| \int_0^{D^\gamma t} e^{- (D^\gamma t-s) }(Z_{s /  D^\gamma}^\gamma)^{-2} \ ds \right|^\half \right) ,
	\end{align*}
	Thanks to the assumption \eqref{eq:lowerboundD} it remains to prove
	$$ \sup_{\substack{\gamma>1 \\ t\in\R_+}}
	\E\left( Z_{t / D^\gamma}^\gamma \left| \int_0^{t} e^{-(t-s) }(Z_{s /  D^\gamma}^\gamma)^{-2} \ ds \right|^\half \right)
	< \infty \ .
	$$
	Upon writing ${B}_{s/D^\gamma} = (D^\gamma)^{-\half } \widetilde{B}_{s}$ and $a^\gamma_s = \gamma (D^\gamma)^{-\half } \widetilde{a}^\gamma_{s/D^\gamma}$, we have:
	\begin{align*}
	  Z_{t / D^\gamma}^\gamma = & \ \Ec\left( \gamma \int_0^{t/D^\gamma} a_s^\gamma dB_s \right) =  \ \Ec\left( \gamma (D^\gamma)^{-\half } \int_0^{t} a_{s/D^\gamma}^\gamma d\widetilde{B}_s \right)\\
	= & \ \Ec\left( \int_0^{t} \widetilde{a_s^\gamma} \ d\widetilde{B}_s \right)
	 ,
    \end{align*}
	where $\widetilde{B}$ is still a Brownian motion thanks to scale invariance. Now, notice that thanks to the assumption \eqref{eq:lowerboundD}, we have:
	$$ \sup_{\gamma > 1} \|\widetilde{a}^\gamma\|_\infty < \infty \ .$$
As such, upon renaming variables and processes, i.e. $\widetilde{B}=B$ and $\widetilde{a^\gamma}=a^\gamma$, we can set $M_t^\gamma := \Ec\left( \int_0^t a^\gamma_s dB_s \right)$, and we see that we only need to prove that:
	$$ \sup_{\substack{\gamma>1 \\ t\in\R_+}}
	\E\left( M_{t}^\gamma \left| \int_0^{t} e^{-(t-s) }(M_{s}^\gamma)^{-2} \ ds \right|^\half \right)
	< \infty \ .
	$$
	In order to recover Eq.~\eqref{eq:step2_reduction}, we need to discard the supremum over $t \in [0,1]$. This is achieved with Cauchy-Schwarz's inequality and then Fubini's theorem:
\begin{align*}
     & \sup_{\substack{\gamma>1 \\ t\in [0,1]}}
	   \E\left( M_{t}^\gamma \left| \int_0^{t} e^{-(t-s) }(M_{s}^\gamma)^{-2} \ ds \right|^\half \right)\\
\leq & \sup_{\substack{\gamma>1 \\ t\in [0,1]}}
	   \E\left( (M_{t}^\gamma)^2 \int_0^{t} e^{-(t-s) }(M_{s}^\gamma)^{-2} \ ds \right)^\half\\
\leq & \sup_{\substack{\gamma>1 \\ (s,t) \in [0,1]^2}} \E\left( (M_{t}^\gamma)^2 (M_{s}^\gamma)^{-2} \right)^\half
	< \infty \ .
\end{align*}	
	Indeed, since $\sup_{\gamma>0} \Vert a^\gamma \Vert_{\infty} <\infty$, the random variables $M^\gamma_t$ and $(M_t^\gamma)^{-1}$ have all their moments uniformly bounded in $\gamma>1$ and $t \in [0,1]$.
	
    \medskip
	
	{\bf Step 3: Conclusion.}
	We conclude by studying the process $Y^\gamma$ defined by
    $$
       Y_t^\gamma := \ M_{t}^\gamma \left| \int_0^{t} e^{-(t-s) }(M_{s}^\gamma)^{-2} \ ds \right|^\half \ ,
    $$ 
    for $t \geq 1$ and proving the claim in Eq.~\eqref{eq:step2_reduction} thanks to a Lyapounov function argument.
    
    Applying Itô's formula to $Y^\gamma_t$ and then to $(Y^\gamma_t)^{-1}$, we have:
    \begin{align*}
    dY_t^\gamma & = \frac{d(M^\gamma_t e^{-\half t})}{M^\gamma_t e^{-\half t}} Y_t
                  + \frac{M^\gamma_t e^{-\half t} e^t (M^\gamma_t)^{-2}}
                         {2 \sqrt{\int_0^{t} e^{s }(M_{s}^\gamma)^{-2} \ ds}} dt\\
                & = \frac{dM^\gamma_t}{M^\gamma_t} Y_t^\gamma
                  + \frac{d (e^{-\half t})}{e^{-\half t}} Y_t^\gamma
                  + \frac{ 1 }
                         {2 M^\gamma_t e^{-\half t} \sqrt{\int_0^{t} e^{s }(M_{s}^\gamma)^{-2} \ ds}} dt\\
                & = a_t^\gamma Y_t^\gamma \ dB_t
                  - \half Y_t^\gamma dt
                  + \frac{ dt }
                         {2 Y_t^\gamma } \ ,
    \end{align*}
    and then
    \begin{align*}
    d(Y_t^\gamma)^{-1} & = - \frac{dY_t^\gamma}{(Y_t^\gamma)^2}
                           + \half 2 \frac{d\langle Y^\gamma, Y^\gamma \rangle_t}{(Y_t^\gamma)^3}\\
& = - \left(Y_t^\gamma\right)^{-2} \left( a_t^\gamma \  Y_t^\gamma \ dB_t
                  - \half Y_t^\gamma dt
                  + \frac{ dt }
                         {2 Y_t^\gamma } \right)  
    + \frac{|a^\gamma_t|^2}{Y_t^\gamma} dt\\
& = - a_t^\gamma (Y_t^\gamma)^{-1} \ dB_t
    + \half (Y_t^\gamma)^{-1} dt 
    - \half (Y_t^\gamma)^{-3} dt 
    + |a^\gamma_t|^2 (Y_t^\gamma)^{-1} dt \ .
\end{align*}
    Taking expectations, we have:
	\begin{align}
	\label{eq:ode_system}
	\left\{
	\begin{array}{ccc}
		\cfrac{d}{dt}\ \E[Y_t^\gamma] & = &-\half \E [Y_t^\gamma]+\half \E[(Y_t^\gamma)^{-1}] \ ,\\
		                       &   & \\
		\cfrac{d}{dt}\ \E[(Y_t^\gamma)^{-1}]& = & \E\left[(\half + |a^\gamma_t|^2 )(Y_t^\gamma)^{-1}\right]-\frac12\E[(Y_t^\gamma)^{-3}]  \ .
	\end{array}
	\right.
	\end{align}
	 Now, one can show that:
     $$ \forall \alpha>0,\quad \forall x>0, \quad
        x^{-3}\geq \alpha x^{-1}- 2\Big(\frac{\alpha}3\Big)^{\frac32} \ .     
     $$
     Therefore, for any $\gamma>1$, $\alpha>0$:
	$$\frac{d}{dt} \E[(Y_t^\gamma)^{-1}]
	  \leq
	  \E\left[ \left(\half + |a^\gamma_t|^2  -\half \alpha \right)(Y_t^\gamma)^{-1} \right]
	 +\Big(\frac\alpha 3\Big)^{\frac32} \ .$$
	Setting $\alpha = 1 + 2(1 + \sup_{\gamma > 1}\|a^\gamma\|_\infty)$ so that $\half + |a^\gamma_t|^2  -\half \alpha \leq -1$ almost surely, we find:
	$$\forall t \geq 1, \quad 
	  \frac{d}{dt}\E[(Y_t^\gamma)^{-1}]
	  \leq
	  - \E\left[ (Y_t^\gamma)^{-1} \right]
	  + \Big(\frac\alpha 3\Big)^{\frac32}.$$

Using Gr\"onwall's inequality, there exists $C>0$ such that for all $t \geq 1$:
	$$\E[(Y_t^\gamma)^{-1}] \leq \E[(Y_1^\gamma)^{-1}] + C \ .$$
Then:
\begin{align*}
     \ \E\left[ (Y_1^\gamma)^{-1} \right]
 & = \ \E\left[ (M_{1}^\gamma)^{-1}
                \left| \int_0^{1} e^{-(1-s) }(M_{s}^\gamma)^{-2} \ ds \right|^{-\half} \right] \\
 & \leq \ \E\left[ (M_{1}^\gamma)^{-1} \right]^\half
        \ \E\left[ \left| \int_0^{1} e^{-(1-s) }(M_{s}^\gamma)^{-2} \ ds \right|^{-1} \right]^\half \\
 & \leq \ e^{\half}
        \ \E\left[ (M_{1}^\gamma)^{-1} \right]^\half
        \ \E\left[ \left| \inf_{0 \leq s \leq 1} (M_{s}^\gamma)^{-2} \right|^{-1} \right]^\half \\
 & \leq \ e^{\half}
        \ \E\left[ (M_{1}^\gamma)^{-1} \right]^\half
        \ \E\left[ \sup_{0 \leq s \leq 1} (M_{s}^\gamma)^{2} \right]^\half \ .
\end{align*}
This expression is universally bounded because of Doob's inequality and the properties of the martingale $M^\gamma$. We have thus proven that there exists $C>0$ such that for all $t \geq 1$:
	$$\E[(Y_t^\gamma)^{-1}] \leq 2C \ .$$
Injecting the above equation in the first equation in Eq.~\eqref{eq:ode_system}, we find:
$$ \frac{d\E(Y_t^\gamma)}{dt} = -\half \E(Y_t^\gamma) + C \ .$$
	Using again Gr\"onwall's inequality and the fact that $\E (Y_1^\gamma)$ is universally bounded, we find that indeed
	$$\sup_{\gamma >1, t \geq 1} \E(Y_t^\gamma) < \infty \ .$$
	The lemma is thus proven.

\subsection{The time spent away from the pointer states vanishes}
\label{subsection:time_control}

\begin{lemma}\label{lemma:Loc}
Let $(\gamma_n)_n$ be an unbounded sequence in $\R_+$ such that $(\rho^{\gamma_n})_n$ converges weakly in $ \left( {\mathbb L}^0 (\R_+;M_d(\C)), {\rm d} \right)$ to some random measurable function $\beta$. Then, for almost every $t\in\R_+$, almost surely, $\beta_t\in \left( E_{i,i} \right)_{i=1}^d$.

More precisely, let $\B\left( E_{i,i}, \varepsilon \right)$ denote the Hilbert--Schmidt ball in $M_d(\C)$ of radius $\varepsilon>0$ centered in $E_{i,i}$. Let $T_\varepsilon^\gamma(t)$ be the time spent outside of any of the balls $\B\left( E_{i,i}, \varepsilon \right)$ up to time $t$: 
$$ T_\varepsilon^\gamma(t) := \int_0^t \mathds{1}_{\{ \rho_s^\gamma\notin \cup_{i=1}^d \B\left( E_{i,i}, \varepsilon \right) \} } \ ds \ .$$
Then, for any $\varepsilon>0$ there exists $C>0$ such that for any $t\geq 0$,
\begin{equation}\label{eq:bound_expectedtime}
\limsup_{\gamma\to\infty} \gamma \ \E\left( T_\varepsilon^\gamma(t) \right) \le Ct.
\end{equation}
\end{lemma}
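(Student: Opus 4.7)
The approach is to control the ``distance'' to pointer states via a Lyapunov function and a Riccati-type estimate. Set $F(\rho) := 1 - \sum_{i=1}^d \rho_{i,i}^2$, which is continuous, takes values in $[0,1]$ on $\Sc$, and vanishes exactly at the pointer states $(E_{i,i})_{i=1}^d$. Using the positivity constraint $|\rho_{i,j}|^2 \leq \rho_{i,i}\rho_{j,j}$, one checks that $F(\rho) \leq \delta$ forces $\min_i \|\rho - E_{i,i}\|^2_{\rm HS} \leq C_d\,\delta$; contrapositively, $\mathds{1}\{\rho \notin \cup_i \B(E_{i,i},\varepsilon)\} \leq C_d \varepsilon^{-2} F(\rho)$, so Eq.~\eqref{eq:bound_expectedtime} reduces to showing $\limsup_{\gamma \to \infty}\gamma \int_0^t \mathbb{E}[F(\rho_s^\gamma)]\,ds \leq C\,t$. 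For the qualitative assertion, specialising this reduction to $\lambda(ds)=e^{-s}ds$ yields $\mathbb{E}\int_0^\infty F(\rho_s^\gamma)e^{-s}ds \to 0$; since $F$ is bounded and continuous on $\Sc$, the map $w \mapsto \int_0^\infty F(w(s))e^{-s}ds$ is continuous on $({\mathbb L}^0,{\rm d})$ by Definition~\ref{def:MZtopology}, so weak convergence $\rho^{\gamma_n} \to \beta$ yields $\mathbb{E}\int_0^\infty F(\beta_s)e^{-s}ds = 0$; combined with $\beta$ being supported on diagonal matrices (Lemma~\ref{lemma:tightness_offdiag}), this forces $\beta_s \in (E_{i,i})_{i=1}^d$ for a.e.\ $s$, almost surely.

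For the quantitative bound I would apply It\^o's formula to $F(\rho_t^\gamma)$. Writing $p_i := \rho_{i,i}$, $\lambda_k^{(i)} := \Re(L_k^{(2)})_{i,i}$, and $\bar\lambda_k := \sum_j p_j \lambda_k^{(j)}$, the QND structure ($\Pi\Lc^{(2)}=0$ and $\Pi\Lc^{(1)}\Pi=0$ from Lemma~\ref{lemma:lindblad_structure}) reduces the drift $-2\sum_i p_i (\Lc_\gamma \rho)_{i,i}$ to contributions from $\Lc^{(0)}$ and from the commutator with $H^{(1)}$ acting on $\Pi_\perp\rho$, bounded by $C + C\gamma\|\Pi_\perp\rho_t^\gamma\|$. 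The quadratic variation produces the key contraction $-4\gamma^2 Q(\rho_t^\gamma)\,dt$ with
$$ Q(\rho) := \sum_{k=1}^{\ell_2} \eta_2(k)\sum_i p_i^2 (\lambda_k^{(i)} - \bar\lambda_k)^2, $$
plus lower-order contributions from $\sigma^{(0)}$ and $\sigma^{(1)}$. The crucial algebraic inequality is $Q(\rho) \geq c\, F(\rho)^2$ for a universal $c>0$: Cauchy--Schwarz on $i$ gives $\sum_i p_i^2 v_i^2 \geq (\sum_i p_i v_i^2)^2 / \sum_i v_i^2$ with $v_i = \lambda_k^{(i)} - \bar\lambda_k$ and a uniformly bounded denominator; Cauchy--Schwarz in $k$ weighted by $\eta_2(k)$ then yields $Q(\rho) \gtrsim \bigl(\sum_k\eta_2(k)\operatorname{Var}_\rho(\lambda_k)\bigr)^2$; finally $\sum_k \eta_2(k)\operatorname{Var}_\rho(\lambda_k) = \sum_{i<j}p_i p_j \sum_k \eta_2(k)(\lambda_k^{(i)}-\lambda_k^{(j)})^2 \geq c' F(\rho)/2$ by the Identifiability Assumption~\ref{ass:(Id)} and the identity $F(\rho)=2\sum_{i<j}p_i p_j$.

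Taking expectations and using the decoherence estimate $\mathbb{E}\|\Pi_\perp\rho_s^\gamma\| \leq C(e^{-g\gamma^2 s}+\gamma^{-1})$ from Eq.~\eqref{eq:offdiag_L1_sqrtgamma}, together with Jensen's inequality $\mathbb{E}[F^2] \geq (\mathbb{E}[F])^2$, the function $u(t) := \mathbb{E}[F(\rho_t^\gamma)]$ satisfies, for almost every $t$,
$$ u'(t) \;\leq\; C + C\gamma\, e^{-g\gamma^2 t} - 4c\gamma^2\, u(t)^2 . $$
Comparison with the scalar Riccati equation $\tilde u' = C' - 4c\gamma^2 \tilde u^2$ (which has an equilibrium of order $1/\gamma$ and whose solutions starting in $[0,1]$ enter the $O(1/\gamma)$ region in time $O(1/\gamma)$) gives the pointwise bound $u(s) \leq 1/(1+4c\gamma^2 s) + C''/\gamma$ after an exponentially short transient; integrating yields $\int_0^t u(s)\,ds = O((\log\gamma)/\gamma^2) + O(t/\gamma)$, so multiplying by $\gamma$ and letting $\gamma\to\infty$ produces $\limsup_\gamma \gamma\,\mathbb{E}[T_\varepsilon^\gamma(t)] \leq C\,t/\varepsilon^2$, which is the bound \eqref{eq:bound_expectedtime}. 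The main technical hurdle will be the Riccati step: the natural differential inequality holds only almost everywhere (and must be obtained from an integral form via an appropriate localisation of the It\^o martingale), and the quadratic dissipation $-\gamma^2 u^2$ is genuinely weak near pointer states, so careful tracking of constants through the double Cauchy--Schwarz that delivers $Q \gtrsim F^2$ is needed to beat the na\"ive $\sqrt{t}/\gamma$ estimate.
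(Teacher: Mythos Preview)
Your approach is correct but substantially more elaborate than needed, and the paper's argument shows why. Both you and the paper apply It\^o to a bounded function of the diagonal whose quadratic variation at leading order is precisely $\gamma^2$ times the nonnegative functional $f(\rho)=\|\Pi\sigma^{(2)}(\rho)\|^2=4Q(\rho)$. The paper simply \emph{integrates once} and uses boundedness of the test function: from
\[
\E\bigl[\|\Pi(\rho_t^\gamma-\rho_0^\gamma)\|^2\bigr]
=\int_0^t\E\bigl[\,2\Re\langle\Pi\Lc_\gamma\rho_s^\gamma,\Pi(\rho_s^\gamma-\rho_0^\gamma)\rangle\,\bigr]ds
+\int_0^t\E\bigl[\gamma^2 f(\rho_s^\gamma)+O(\gamma)\bigr]ds
\]
together with $\Pi\Lc^{(2)}=0$ and compactness of $\Sc$, one reads off $\gamma^2\int_0^t\E[f(\rho_s^\gamma)]\,ds\le C+C\gamma t$, hence $\limsup_\gamma\gamma\int_0^t\E[f]\le Ct$. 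Since $f$ is continuous on $\Sc$ and vanishes exactly on the pointer states (this is where Assumption~\ref{ass:(Id)} enters, via the same computation you use), compactness gives $f\ge K\,\mathds{1}_{\{\rho\notin\cup_i\B(E_{i,i},\varepsilon)\}}$ and the bound \eqref{eq:bound_expectedtime} follows immediately. No decoherence estimate, no $Q\ge cF^2$, no Jensen, no Riccati comparison.

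Your own It\^o computation already contains this: you derive that the drift of $F$ is at most $C+C\gamma\|\Pi_\perp\rho\|-4\gamma^2 Q$, and since $\|\Pi_\perp\rho\|$ is bounded on $\Sc$ this is $\le C'\gamma-4\gamma^2 Q$; integrating and using $|F|\le 1$ gives $4\gamma^2\int_0^t\E[Q]\le 1+C'\gamma t$ directly. The detour through $Q\ge cF^2$, Jensen, and the Riccati inequality $u'\le C-4c\gamma^2 u^2$ is correct (your supersolution $1/(1+4c\gamma^2 s)+C''/\gamma$ works once the exponential source is absorbed into a short transient), but it is machinery you built to solve a problem that the boundedness of $F$ already solves in one line. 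The ``main technical hurdle'' you flag is entirely self-imposed. Your qualitative argument for $\beta_t\in\{E_{i,i}\}$ via continuity of $w\mapsto\int F(w(s))e^{-s}ds$ on $(\mathbb L^0,{\rm d})$ is fine and matches the paper's.
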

\begin{proof}
We are interested in the quadratic variation of the noise with leading order in $\gamma$ in the SDE \eqref{eq:SDE}, when restricting to the diagonal. Thanks to From Eq.~\eqref{eq:def_sigma} for $i=2$, and thanks to the QND Assumption \ref{ass:(Nd)}, it is computed as follows:
\begin{align*}
     \left\| \Pi\sigma^{(2)} (\rho) \right\|^2
 = & \sum_{k=1}^{\ell_2}
     \left\| \Pi(\sigma^{(2)} (\rho))_k \right\|^2 \quad \textrm{(definition)}\\
 = & \sum_{k=1}^{\ell_2}
     \eta_2(k) \left\| \Pi  (L^{(2)}_k \rho+\rho L^{(2)\, *}_k-\tr[(L^{(2)\, *}_k+L^{(2)}_k)\rho]\rho) \right\|^2\\
 = & \sum_{k=1}^{\ell_2}
     \eta_2(k) \left\| \Pi \rho \left( (L^{(2)}_k + L^{(2)\, *}_k)-\tr[(L^{(2)\, *}_k+L^{(2)}_k)\rho] \right) \right\|^2\\
 = & \ 4 \sum_{k=1}^{\ell_2}
     \sum_{i=1}^d
     \eta_2(k) \left(\Re ( L^{(2)}_k )_{i,i} - R_k (\rho) \right)^2\rho_{i,i}^2\\
     =:& \ f(\rho)
\end{align*}
with $R_k (\rho) = \sum_{i=1}^d  \Re ( L^{(2)}_k )_{i,i} \ \rho_{i,i}$. Then, $f(\rho)=0$ is equivalent to
\begin{align*}
& \forall k\in\{1,\dotsc,\ell\},
       \ \forall i\in\{1,\dotsc,d\}, 
       \ \eta_2(k) \left( \Re ( L^{(2)}_k )_{i,i} - R_k (\rho) \right) \rho_{i,i}=0
\end{align*}
which in turn is equivalent to
\begin{align*}
& \forall k\in\{1,\dotsc,\ell\},
       \ \forall i,j\in\{1,\dotsc,d\},
       \ \eta_2(k)\rho_{i,i}\rho_{j,j}\left( \Re ( L^{(2)}_k )_{i,i} - \Re ( L^{(2)}_k )_{j,j} \right) = 0.
\end{align*}
The identifiability Assumption \ref{ass:(Id)} thus implies $\rho_{i,i}\rho_{j,j}=0$ for any $i\neq j$. Hence, there exists at most one $i\in \{1,\dotsc,d\}$ such that $\rho_{i,i}\neq0$.
Since $\rho\in\Sc$, $\rho_{i,i}\geq 0$ for any $i\in\{1,\dotsc,d\}$ and $\sum_{i=1}^d \rho_{i,i}=1$,
$$ f(\rho) = 0  \iff \rho\in \left( E_{i,i} \right)_{i=1}^{d} \ .$$
In particular, continuity implies that for any small enough $\varepsilon>0$, there exists a constant $K>0$ such that 
\begin{equation}\label{eq:f_lower_bound}
f(\rho)\geq K\mathds{1}_{\left\{\rho\notin \cup_{i=1}^d \mathbb B(E_{i,i},\varepsilon) \right\}}.
\end{equation}

Since $\rho\mapsto \|\Pi (\rho)\|^2$ is $C^2$, It\^o calculus implies
\begin{equation}
\begin{split}
\label{eq:diff_rho_0_rho_t_squarred}
   \E(\|\Pi(\rho_t^{\gamma}-\rho_0^{\gamma})\|^2)
= & \ \E\left(\int_0^t2\Re(\langle [\Pi \circ \Lc_{\gamma}] (\rho_s^{\gamma}),\Pi (\rho_s^{\gamma}) \rangle) \right.\\
& \ \quad \quad  \left. \phantom{\int_0^t}  +\| [\Pi\circ \sigma^{(0)}] (\rho_s^{\gamma})\|^2 + \gamma\| [\Pi \circ \sigma^{(1)}] (\rho_s^{\gamma})\|^2) ds  \right)  \\
  & \ \quad \quad +\gamma^2\E\left(\int_0^tf(\rho_s^{\gamma})d s  \right) \\
  = &\  \gamma^2 \E\left(\int_0^tf(\rho_s^{\gamma})d s \right) +\Oc(\gamma t) \ ,
\end{split}
\end{equation}
since $\Pi\Lc^{(2)}=0$ and $\Sc$ is compact. Dividing both sides of Eq.~\eqref{eq:diff_rho_0_rho_t_squarred} by $\gamma$, we deduce that
\begin{equation}\label{eq:limsup_expected f}
\limsup_{\gamma\to\infty} \gamma\int_0^t\E(f(\rho_s^{\gamma}))d s\leq Ct \ .
\end{equation}
Eq.~\eqref{eq:bound_expectedtime} follows then from Eq.~\eqref{eq:f_lower_bound}.

Moreover, the continuity of $f$, the weak convergence of the sequence $\left( \rho^{\gamma_n} \right)_{n \geq 1}$ taking values in $\Bc(\R_+;M_d(\C))$ and Eq.~\eqref{eq:limsup_expected f} imply 
$$ \lim_{n \rightarrow \infty} \E \left( \int_0^t f(\rho_s^{\gamma_n})d s \right)
   =
   \E\left( \int_0^t f(\beta_s)ds \right) = 0 \ .$$
The function $f$ being non negative and vanishing only on $\left( E_{i,i} \right)_{i=1}^{d}$, it implies that for almost every time $t \in \R_+$, almost surely, $\beta_t \in \left( E_{i,i} \right)_{i=1}^d$.
\end{proof}

\subsection{Finite dimensional distributions}
\label{subsection:fd_distributions}
Proposition \ref{proposition:tightness} proves tightness of the family $\left( \rho^\gamma \ ; \ \gamma \geq 0 \right)$ in $ \left( {\mathbb L}^0 (\R_+;M_d(\C)), {\rm d} \right)$. It only remains to show that any weakly convergent sequence converges to the same random variable in ${\mathbb L}^0 (\R_+;M_d(\C))$. The idea behind the proof is to use Lemma \ref{lemma:Loc} to reduce continuous functions to linear ones. Then we only have to rely on the mean convergence from Proposition \ref{proposition:Lindblad_perturbation} in order to find the limiting finite-dimensional distribution.

Let $(\gamma_n)_n$ be an unbounded sequence in $(1,+\infty)$ such that $(\rho^{\gamma_n})$ converges weakly in ${\mathbb L}^0 (\R_+;M_d(\C))$ to some $\beta$. 
Let $f$ be a continuous, and therefore bounded function of $\Sc^r$ for a fixed integer $r \in \N$. The goal is to characterize the expectation
$$ \E\left[ f\left( \beta_{t_1}, \beta_{t_2}, \dots, \beta_{t_r} \right) \right]$$
for every $r \in \N$ and for almost every $t_1, \dots, t_r$ in $\R_+$.

\medskip

{\bf Step 1: An $r$-linearization trick}

Write $F_{\bf i} := f(E_{i_1,i_1},\dotsc,E_{i_r,i_r})$ for any $\mathbf{i}=(i_1,\dotsc,i_r)\in\{1,\dotsb,d\}^r$ and notice that if $(\rho_1,\dotsc,\rho_r)\in\{E_{i,i}: i=1,\dotsc, d\}^r$,
\begin{align*}
     \ f(\rho_1,\dotsc,\rho_r)
=  & \ \sum_{\mathbf{i}\in\{1,\dotsc,d\}^r}
      F_{\mathbf{i}} \prod_{k=1}^r \mathds{1}_{E_{i_k,i_k}}(\rho_k)\\
=  & \ \sum_{\mathbf{i}\in\{1,\dotsc,d\}^r}F_{\mathbf{i}} \prod_{k=1}^r (\rho_k)_{i_k,i_k}\\
=: & \ F(\rho_1,\dotsc,\rho_r) \ .
\end{align*}
This latter function $F$ is a continuous $r$-linear map, and shall be referred to as the $r$-linearization of $f$. Now \cite[Theorem 6]{MZ} applied to $f$ and $F$ says exactly that
\begin{equation}\label{eq:conv_to_beta}
\lim_n\int_{[0,\infty)^{r}}\left|\E(f(\rho_{t_1}^{\gamma_n},\dotsc,\rho_{t_r}^{\gamma_n}))-\E(f(\beta_{t_1},\dotsc, \beta_{t_r}))\right|\lambda^{\otimes r}(dt_1\dotsc dt_r)=0
\end{equation}
and
$$ \lim_n\int_{[0,\infty)^{r}}\left|\E(F(\rho_{t_1}^{\gamma_n},\dotsc,\rho_{t_r}^{\gamma_n}))-\E(F(\beta_{t_1},\dotsc, \beta_{t_r}))\right|\lambda^{\otimes r}(dt_1\dotsc dt_r)=0 \ .$$
Lemma \ref{lemma:Loc} implies that for almost every $r$-tuple of times $(t_1,\dotsc, t_r)$, almost surely:
$$ f\left( \beta_{t_1}, \dotsc, \beta_{t_r} \right)
 = F\left( \beta_{t_1}, \dotsc, \beta_{t_r} \right) \ .
$$
It follows that, in the large $n$ limit, every continuous function $f$ in $r$ variables can be replaced by its $r$-linearization:
\begin{equation}
\label{eq:reduction_to_linear}
\lim_n \int_{[0,\infty)^{r}}\left|\E(f(\rho_{t_1}^{\gamma_n},\dotsc,\rho_{t_r}^{\gamma_n}))- \E(F(\rho_{t_1}^{\gamma_n},\dotsc,\rho_{t_r}^{\gamma_n}))\right|\lambda^{\otimes r}(dt_1\dotsc dt_r)=0.
\end{equation}

{\bf Step 2: Markov property between pointer states}

Let $P_{i,j}^\gamma(t) := (e^{t\Lc_\gamma}(E_{i,i}))_{j,j}$ be transition rates between pointer states only. We claim that for any $r \in \N$ and any $\mathbf{i}\in\{1,\dotsc,d\}^r$, 

\begin{align}
\label{eq:limit_multi_d_linearmaps}
0 = & \ \lim_{n \rightarrow \infty} \int \displaylimits_{\substack{0=t_0\leq t_1\leq \dotsb \leq t_r<\infty}}
      \Big| \ \E(\prod_{k=1}^r (\rho_{t_k}^{\gamma_n})_{i_k,i_k}) \ -
      \sum_{i_0=1}^d\varrho_{i_0,i_0}\prod_{k=1}^r P_{i_{k-1}i_k}^{\gamma_n}(t_k-t_{k-1})\Big| 
     \lambda^{\otimes r}(dt_1\dotsc dt_r).
\end{align}
We prove this convergence by induction. 

For $r=1$, we start from 
$$ \E(\rho_t^\gamma)
 = e^{t\Lc_\gamma}(\varrho)
 = e^{t\Lc_\gamma}(\Pi \varrho) + o(1)
$$
since $\lim_{\gamma \rightarrow \infty} e^{t\Lc_\gamma}\Pi_\perp \varrho = 0$ by Proposition \ref{proposition:Lindblad_perturbation}.
Hence:
$$ \E( (\rho_t^\gamma)_{i_1, i_1} )
 = o(1) + \sum_{i_0=1}^d \varrho_{i_0, i_0} \left( e^{t\Lc_\gamma}( E_{i_0,i_0}) \right)_{i_1, i_1}
 = o(1) + \sum_{i_0=1}^d \varrho_{i_0, i_0} P^\gamma_{i_0, i_1}(t) \ .
$$
Integrating over $\R_+$ yields the claim.

Now, by induction hypothesis, assume that the claim holds for some $r\in\N$. Let $\left( \Fc_t ; \ t \geq 0 \right)$ be the natural filtration. For all $0\leq t_1 \leq \dotsb \leq t_r \leq t_{r+1} \leq \infty$, the tower property of conditional expectation and then the Markov property imply
\begin{align*}
        \E\left( \prod_{k=1}^{r+1} (\rho_{t_k}^{\gamma_n})_{i_k,i_k} \right)
 = & \  \E\left( \prod_{k=1}^r (\rho_{t_k}^{\gamma_n})_{i_k,i_k}
        \times 
        \E\left( (\rho_{t_{r+1}}^{\gamma_n} )_{i_{r+1},i_{r+1}} | \Fc_{t_r} \right)\right)\\
 = & \  \E\left( \prod_{k=1}^r (\rho_{t_k}^{\gamma_n})_{i_k,i_k}
        \times 
        (e^{(t_{r+1}-t_r) \Lc_{\gamma_n}} \rho_{t_{r}}^{\gamma_n} )_{i_{r+1},i_{r+1}} \right) \ .
\end{align*}
Introducing a comparison of $\rho_{t_r}^{\gamma_n}$ with $E_{i_ri_r}$,
\begin{align*}
   & \  \left|
        \E\left( \prod_{k=1}^{r+1} (\rho_{t_k}^{\gamma_n})_{i_k,i_k} \right)
        -
        \E\left( \prod_{k=1}^{r} (\rho_{t_k}^{\gamma_n})_{i_k,i_k} 
        \times
		P_{i_ri_{r+1}}^{\gamma_n}(t_{r+1}-t_r)        
        \right)
        \right|\\
 =  & \  \Big|
        \E\left( \prod_{k=1}^r (\rho_{t_k}^{\gamma_n})_{i_k,i_k}
        \times 
        (e^{(t_{r+1}-t_r) \Lc_{\gamma_n}} \rho_{t_{r}}^{\gamma_n} )_{i_{r+1},i_{r+1}} \right)\\
    & \ \qquad \qquad
        -
        \E\left( \prod_{k=1}^{r} (\rho_{t_k}^{\gamma_n})_{i_k,i_k} 
        \times
		P_{i_ri_{r+1}}^{\gamma_n}(t_{r+1}-t_r)        
        \right)
        \Big|\\
 =  & \ \left|
        \E\left[ \prod_{k=1}^r (\rho_{t_k}^{\gamma_n})_{i_k,i_k}
        \times
        \left(
        (e^{(t_{r+1}-t_r) \Lc_{\gamma_n}} \rho_{t_{r}}^{\gamma_n} )_{i_{r+1},i_{r+1}}
        -
		P_{i_ri_{r+1}}^{\gamma_n}(t_{r+1}-t_r)
		\right)
        \right]
        \right| \\
\leq  & \ \E\left[ \left| (\rho_{t_r}^{\gamma_n})_{i_r,i_r}
        \times
        \left(
        e^{(t_{r+1}-t_r) \Lc_{\gamma_n}} (\rho_{t_{r}}^{\gamma_n} - E_{i_r, i_r})
		\right)_{i_{r+1},i_{r+1}}
		\right|
        \right] \\
\leq  & \ \E\left[ \left| (\rho_{t_r}^{\gamma_n})_{i_r,i_r} \right|
        \
        \left\|
        \rho_{t_{r}}^{\gamma_n} - E_{i_r, i_r}
		\right\|
        \right] \ ,
\end{align*}
where the penultimate inequality follows from the fact that $\rho^\gamma$ is $\Sc$-valued. Upon integrating and taking the upper limit for $n \rightarrow \infty$, we have:
\begin{align*}
   & \  \limsup_{n \rightarrow \infty}
        \int \displaylimits_{\substack{0=t_0\leq t_1\leq \dotsb \leq t_r<\infty}}
        \Big|
        \E\left( \prod_{k=1}^{r+1} (\rho_{t_k}^{\gamma_n})_{i_k,i_k} \right)\\
   & \  \qquad \qquad 
        -
        \E\left( \prod_{k=1}^{r} (\rho_{t_k}^{\gamma_n})_{i_k,i_k} 
        \times
		P_{i_ri_{r+1}}^{\gamma_n}(t_{r+1}-t_r)        
        \right)
        \Big|
        \lambda^{\otimes r}(dt_1 \dots dt_r)\\
\leq  & \limsup_{n \rightarrow \infty} \int_0^\infty
        \ \E\left[ \left| (\rho_{t_r}^{\gamma_n})_{i_r,i_r} \right|
        \
        \left\|
        \rho_{t_{r}}^{\gamma_n} - E_{i_r, i_r}
		\right\|
        \right]
        \lambda(dt_r) \\
=     &  \limsup_{n \rightarrow \infty} \int_0^\infty
        \ \E\left[ \left| (\beta_{t_r})_{i_r,i_r} \right|
        \
        \left\|
        \beta_{t_{r}} - E_{i_r, i_r}
		\right\|
        \right]
        \lambda(dt_r) \\
=      & \ 0 ,
\end{align*}
where in the last step we invoked the fact that $\left| (\beta_{t_r})_{i_r,i_r} \right| = \mathds{1}_{E_{i_r,i_r}}(\beta_{t_r})$ for almost every $t_r$ (Lemma \ref{lemma:Loc}), so that the product with $\left\| \beta_{t_{r}} - E_{i_r, i_r} \right\|$ vanishes necessarily. Now invoking the induction hypothesis with $r$:
\begin{align*}
0 = \ & \  \limsup_{n \rightarrow \infty}
        \int \displaylimits_{\substack{0=t_0\leq t_1\leq \dotsb \leq t_r<\infty}}
        \Big|
        \E\left( \prod_{k=1}^{r} (\rho_{t_k}^{\gamma_n})_{i_k,i_k}
		P_{i_ri_{r+1}}^{\gamma_n}(t_{r+1}-t_r)        
		\right)\\
    \ & \qquad \qquad
        -
        \sum_{i_0=1}^d\varrho_{i_0,i_0}\prod_{k=1}^{r+1} P_{i_{k-1}i_k}^{\gamma_n}(t_k-t_{k-1})
        \Big|
        \lambda^{\otimes r}(dt_1 \dots dt_r) \ ,
\end{align*}
combined with the previous limit yields the claim for $r+1$.

\medskip

{\bf Step 3: Invoking the convergence of the mean.}
By linearity and Eq.~\eqref{eq:limit_multi_d_linearmaps}, we have for all $r \in \N$:
\begin{align}
\label{eq:limit_multi_d_linearmaps2}
\begin{split}
0 = & \ \lim_{n \rightarrow \infty} \int \displaylimits_{\substack{0=t_0\leq t_1\leq \dotsb \leq t_r<\infty}}
      \Big| \ \E(F(\beta_{t_1},\dotsc,\beta_{t_r})) \ - \\
    & \qquad \qquad \sum_{\mathbf i\in\{1,\dotsc,d\}^r} F_{\mathbf i}
                    \sum_{i_0=1}^d\varrho_{i_0,i_0}\prod_{k=1}^r P_{i_{k-1}i_k}^{\gamma_n}(t_k-t_{k-1})\Big| 
     \lambda^{\otimes r}(dt_1\dotsc dt_r).
\end{split}
\end{align}

By the convergence of the mean of Proposition \ref{proposition:Lindblad_perturbation}, $\lim_{\substack{\gamma\to\infty}} P_{i,j}^\gamma(t)=(e^{tT})_{i,j}$. Then the above Eq.~\eqref{eq:limit_multi_d_linearmaps2} together with Eq.~\eqref{eq:reduction_to_linear}, and \eqref{eq:conv_to_beta} imply that for $\lambda^{\otimes r}$-almost every $t_1 \leq t_2 \leq \dots \leq t_r$, we have:
\begin{align*}
  & \ \lim_n \E(f(\rho_{t_1}^{\gamma_n},\dotsc,\rho_{t_r}^{\gamma_n}))\\
= & \ \E(f(\beta_{t_1},\dotsc,\beta_{t_r}))\\
= & \ \E(F(\beta_{t_1},\dotsc,\beta_{t_r}))\\
= & \ \lim_n \E(F(\rho_{t_1}^{\gamma_n},\dotsc,\rho_{t_r}^{\gamma_n}))\\
= & \ \sum_{\mathbf i\in\{1,\dotsc,d\}^r} F_{\mathbf i} \sum_{i_0=1}^d \langle \varrho,E_{i_0i_0}\rangle \prod_{k=1}^r (e^{(t_k - t_{k-1})T})_{i_{k-1}i_k} \ .
\end{align*}

Then \cite[Theorem 6]{MZ} yields that $\beta$ has the same law as the process $\xb^* \xb$, where $\xb$ is the Markov process described in the statement of the Main Theorem \ref{thm:main} with generator $T$ on the pointer basis $\left( e_i \right)_{i=1}^d$. This concludes the proof.
\hfill \qed

\newcommand{\nocontentsline}[3]{}
\newcommand{\tocless}[2]{\bgroup\let\addcontentsline=\nocontentsline#1{#2}\egroup}
\section*{Acknowledgements}
T.B. would like to thanks Martin Fraas for enlightening discussions. The research of T.B., R.C.\ and C.P.\ has been supported by project QTraj (ANR-20-CE40-0024-01) of the French National Research Agency (ANR). The research of T.B.\ and C.P.\ has been supported by ANR-11-LABX-0040-CIMI within the program ANR-11-IDEX-0002-02. The work of C.B. and R.C. has been supported by the projects RETENU ANR-20-CE40-0005-01, LSD ANR-15-CE40-0020-01 of the French National Research Agency (ANR) and by the European Research Council (ERC) under  the European Union's Horizon 2020 research and innovative programme (grant agreement No 715734). J. N. has been supported by the start up grant U100560-109 from the University of Bristol and a Focused Research Grant from the Heibronn Institute for Mathematical Research. This work is supported by the 80 prime project StronQU of MITI-CNRS: ``Strong noise limit of stochastic processes and application of quantum systems out of equilibrium''.

\bibliographystyle{halpha}
\bibliography{biblio}

\end{document}